\documentclass[twoside,leqno,10pt, A4]{amsart}
\usepackage{amsfonts}
\usepackage{amsmath}
\usepackage{amscd}
\usepackage{amssymb}
\usepackage{amsthm}
\usepackage{amsrefs}
\usepackage{latexsym}
\usepackage{mathrsfs}
\usepackage{bbm}
\usepackage{enumerate}
\usepackage{graphicx}

\usepackage{amsfonts}
\usepackage{amsmath}
\usepackage{amscd}
\usepackage{amssymb}
\usepackage{amsthm}
\usepackage{amsrefs}
\usepackage{latexsym}
\usepackage{mathrsfs}
\usepackage{bbm}
\usepackage{amscd}
\usepackage{amssymb}
\usepackage{amsthm}
\usepackage{amsrefs}
\usepackage{latexsym}
\usepackage{mathrsfs}
\usepackage{bbm}
\usepackage{enumerate}
\usepackage{graphicx}
\usepackage{color}
\setlength{\textwidth}{18.2cm}
\setlength{\oddsidemargin}{-0.7cm}
\setlength{\evensidemargin}{-0.7cm}
\setlength{\topmargin}{-0.7cm}
\setlength{\headheight}{0cm}
\setlength{\headsep}{0.5cm}
\setlength{\topskip}{0cm}
\setlength{\textheight}{23.9cm}
\setlength{\footskip}{.5cm}

\begin{document}

\newtheorem{theorem}[subsection]{Theorem}
\newtheorem{proposition}[subsection]{Proposition}
\newtheorem{lemma}[subsection]{Lemma}
\newtheorem{corollary}[subsection]{Corollary}
\newtheorem{conjecture}[subsection]{Conjecture}
\newtheorem{prop}[subsection]{Proposition}
\numberwithin{equation}{section}
\newcommand{\mr}{\ensuremath{\mathbb R}}
\newcommand{\mc}{\ensuremath{\mathbb C}}
\newcommand{\dif}{\mathrm{d}}
\newcommand{\intz}{\mathbb{Z}}
\newcommand{\ratq}{\mathbb{Q}}
\newcommand{\natn}{\mathbb{N}}
\newcommand{\comc}{\mathbb{C}}
\newcommand{\rear}{\mathbb{R}}
\newcommand{\prip}{\mathbb{P}}
\newcommand{\uph}{\mathbb{H}}
\newcommand{\fief}{\mathbb{F}}
\newcommand{\majorarc}{\mathfrak{M}}
\newcommand{\minorarc}{\mathfrak{m}}
\newcommand{\sings}{\mathfrak{S}}
\newcommand{\fA}{\ensuremath{\mathfrak A}}
\newcommand{\mn}{\ensuremath{\mathbb N}}
\newcommand{\mq}{\ensuremath{\mathbb Q}}
\newcommand{\half}{\tfrac{1}{2}}
\newcommand{\f}{f\times \chi}
\newcommand{\summ}{\mathop{{\sum}^{\star}}}
\newcommand{\chiq}{\chi \bmod q}
\newcommand{\chidb}{\chi \bmod db}
\newcommand{\chid}{\chi \bmod d}
\newcommand{\sym}{\text{sym}^2}
\newcommand{\hhalf}{\tfrac{1}{2}}
\newcommand{\sumstar}{\sideset{}{^*}\sum}
\newcommand{\sumprime}{\sideset{}{'}\sum}
\newcommand{\sumprimeprime}{\sideset{}{''}\sum}
\newcommand{\sumflat}{\sideset{}{^\flat}\sum}
\newcommand{\shortmod}{\ensuremath{\negthickspace \negthickspace \negthickspace \pmod}}
\newcommand{\V}{V\left(\frac{nm}{q^2}\right)}
\newcommand{\sumi}{\mathop{{\sum}^{\dagger}}}
\newcommand{\mz}{\ensuremath{\mathbb Z}}
\newcommand{\leg}[2]{\left(\frac{#1}{#2}\right)}
\newcommand{\muK}{\mu_{\omega}}
\newcommand{\thalf}{\tfrac12}
\newcommand{\lp}{\left(}
\newcommand{\rp}{\right)}
\newcommand{\Lam}{\Lambda_{[i]}}
\newcommand{\lam}{\lambda}
\def\L{\fracwithdelims}
\def\om{\omega}
\def\pbar{\overline{\psi}}
\def\phis{\phi^*}
\def\lam{\lambda}
\def\lbar{\overline{\lambda}}
\newcommand\Sum{\Cal S}
\def\Lam{\Lambda}
\newcommand{\sumtt}{\underset{(d,2)=1}{{\sum}^*}}
\newcommand{\sumt}{\underset{(d,2)=1}{\sum \nolimits^{*}} \widetilde w\left( \frac dX \right) }

\newcommand{\hf}{\tfrac{1}{2}}
\newcommand{\af}{\mathfrak{a}}
\newcommand{\Wf}{\mathcal{W}}

\newtheorem{mylemma}{Lemma}
\newcommand{\intR}{\int_{-\infty}^{\infty}}

\theoremstyle{plain}
\newtheorem{conj}{Conjecture}
\newtheorem{remark}[subsection]{Remark}

\makeatletter
\def\widebreve{\mathpalette\wide@breve}
\def\wide@breve#1#2{\sbox\z@{$#1#2$}%
     \mathop{\vbox{\m@th\ialign{##\crcr
\kern0.08em\brevefill#1{0.8\wd\z@}\crcr\noalign{\nointerlineskip}%
                    $\hss#1#2\hss$\crcr}}}\limits}
\def\brevefill#1#2{$\m@th\sbox\tw@{$#1($}%
  \hss\resizebox{#2}{\wd\tw@}{\rotatebox[origin=c]{90}{\upshape(}}\hss$}
\makeatletter

\title[Bounds for moments of cubic and quartic Dirichlet $L$-functions]{Bounds for moments of cubic and quartic Dirichlet $L$-functions}

\author{Peng Gao and Liangyi Zhao}

\begin{abstract}
 We study the $2k$-th moment of central values of the family of primitive cubic and quartic Dirichlet $L$-functions.   We establish sharp lower bounds for all real $k \geq 1/2$ unconditionally for the cubic case and under the Lindel\"of hypothesis for the quartic case.  We also establish sharp lower bounds for all real $0 \leq k<1/2$ and sharp upper bounds for all real $k \geq 0$ for both the cubic and quartic cases under the generalized Riemann hypothesis (GRH). As an application of our results, we establish quantitative non-vanishing results for the corresponding $L$-values.
\end{abstract}

\maketitle

\noindent {\bf Mathematics Subject Classification (2010)}: 11M06  \newline

\noindent {\bf Keywords}: moments, cubic Dirichlet $L$-functions, quartic Dirichlet $L$-functions, lower bounds, upper bounds

\section{Introduction}
\label{sec 1}

The moments of $L$-functions are very important in many arithmetic applications. A classical case is the $2k$-th moment of the Riemann zeta function $\zeta(s)$ on the critical line
\begin{align*}
 M_k(T)=\int\limits^{2T}_{T} \left| \zeta \left( \frac{1}{2}+it \right) \right|^{2k} \dif t.
\end{align*}
  In connection with random matrix theory, J. P. Keating and N. C. Snaith \cite{Keating-Snaith02} conjectured precise formulas for $M_k(T)$ for all real $k \geq 0$. The same formulas were also conjectured by  A. Diaconu, D. Goldfeld and J. Hoffstein \cite{DGH} using multiple Dirichlet series. More precise asymptotic formulas with lower order terms are given in the work of J. B. Conrey, D. W. Farmer, J. P. Keating,
  M. O. Rubinstein and N. C. Snaith in \cite{CFKRS}. \newline

The only currently known asymptotic formulas for $M_k(T)$ are for $k=0$, $1$ and $2$ with $k=1$ due to G. H. Hardy and J. E. Littlewood \cite{H&L} and $k=2$ due to A. E. Ingham \cite{In}.  Other than these cases, sharp lower bounds for $M_k(T)$ of the conjectured order of magnitude were proved when $2k$ is a positive integer by K. Ramachandra \cite{Ramachandra2}, for all positive rational numbers $k$ by D. R. Heath-Brown \cite{H-B81-2}, and for all positive real numbers $k$ by K. Ramachandra \cite{Ramachandra1}.  The results for irrational $k$'s required the assumption of the truth of the Riemann hypothesis (RH). In the other direction, sharp upper bounds for $M_k(T)$ of the conjectured order of magnitude were known unconditionally for $k = 1$ and under RH for $0 <k< 2$ by K. Ramachandra \cite{Ramachandra3}. The ranges of validity of the upper bounds were extended to  $k=1/n$ for positive integers $n$ unconditionally and $0 <k \leq 2$ under RH by D. R. Heath-Brown \cite{H-B81-2}, and were further extended to $0<k<2+2/11$ by M. Radziwi{\l\l} \cite{Radziwill} under RH and to $k = 1 + 1/n$ for positive integers $n$ by S. Bettin, V. Chandee and M. Radziwi{\l\l}  \cite{BCR}. \newline

    In \cite{R&Sound, R&Sound1}, Z. Rudnick and K. Soundararajan developed a simple and powerful method towards establishing sharp lower bounds for moments of families of $L$-functions and this method was extended by M. Radziwi{\l\l} and K. Soundararajan \cite{Radziwill&Sound} to obtain the desired lower bounds for $M_k(T)$  for any real number $k > 1$ unconditionally. In \cite{Sound2009}, K. Soundararajan introduced a method that allows one to essentially derive sharp upper bounds for moments of families of $L$-functions under the generalized Riemann hypothesis (GRH). A refinement of this method by A. J. Harper \cite{Harper} led to the desired upper bounds for  $M_k(T)$ for all $k \geq 0$. \newline

    In \cite{Radziwill&Sound}, M. Radziwi{\l\l}  and K. Soundararajan developed an upper bounds principle to study moments of families of $L$-functions unconditionally and applied the method for the family of quadratic twists of $L$-functions associated with elliptic curves.  This modus operandi was carried out further by W. Heap, M. Radziwi{\l\l} and K. Soundararajan in \cite{HRS} to establish sharp upper bounds for $M_k(T)$ for $0 \leq k \leq 2$ unconditionally.  A dual principle was developed by W. Heap and K. Soundararajan in \cite{H&Sound} to prove sharp lower bounds for $M_k(T)$ for all real $k \geq 0$ unconditionally. \newline

  As both of the above principles of M. Radziwi{\l\l} and K. Soundararajan and of W. Heap and K. Soundararajan work for general families of $L$-functions, they can be applied to study many important families of $L$-functions, beyond the prototypical $\zeta(s)$.  For example, the first-named author applied them to in the study of the bounds for moments of central values of the family of quadratic Dirichlet $L$-functions in \cites{Gao2021-2, Gao2021-3}. \newline

  As Dirichlet characters of a fixed order have significant applications in number theory, it is investigate families $L$-functions attached to these characters. In this paper, we aim to study moments of central values of families of $L$-functions associated with either primitive cubic or quartic Dirichlet characters. Previously, the first moments of these families are obtained in the work of S. Baier and M. P. Young \cite{B&Y} for the cubic case and of the authors \cite{G&Zhao7} for the quartic case. The result in \cite{G&Zhao7} is obtained under the Lindel\"of hypothesis. \newline

We further note that, according to the density conjecture of N. Katz and P. Sarnak \cite{K&S} on the low-lying zeros of families of $L$-functions, the underlying symmetries for the family of quadratic Dirichlet $L$-functions are not the sam as those attached to Dirichlet characters of a fixed higher order. Indeed, the family of quadratic Dirichlet $L$-functions is a symplectic family and  those of cubic and quartic Dirichlet $L$-functions are both unitary families(see \cite{G&Zhao2}).  Thus, the moments that we study herein should resemble those of the Riemann zeta-function on the critical line.  We demonstrate this similarity in the paper by establishing sharp upper and lower bounds for these moments. \newline

For lower bounds, we shall apply the lower bounds principle of W. Heap and K. Soundararajan \cite{H&Sound} to our setting. For the case $k \geq 1/2$, the results depend essentially on evaluations of twisted first moments of cubic and quartic Dirichlet $L$-functions. To state our results, we first introduce some notations.  We write $K$ for either the number field $\mq(i)$ or $\mq(\omega)$ (where $\omega = \exp (2 \pi i/3)$) and $\zeta_K(s)$ for the corresponding Dedekind zeta function.  Let $N(n)$ stand for the norm of any $n \in K$ and let $r_K$ be the residue of $\zeta_K(s)$ at $s = 1$.  We also use $D_K$ to denote the discriminant of $K$ and we recall that (see \cite[sec 3.8]{iwakow}) $D_{\mq(\omega)}=-3, D_{\mq(i)}=-4$.  We reserve the letter $p$ for a prime number in $\mz$ and the letter $\varpi$ for a prime in $K$. For any integer $c \in \mz$, define
\begin{align}
\label{gc}
  g(c)=& \prod_{\varpi | c} (1+ N(\varpi)^{-1})^{-1} \prod_{p | c} \Big(1-\frac 1{p^2}\prod_{\varpi | p} (1- N(\varpi)^{-2})^{-1}  \Big )^{-1}.
\end{align}
We henceforth use the usual convention that an empty product is defined to be $1$.  The same notation $g(c)$ will be used for both $K=\mq(\omega)$ and $\mq(i)$. Thus the meaning of $\varpi$ may vary accordingly.  The distinction should be clear from the context. \newline

  We also define for any integer $\ell \in \mz$,
\begin{equation}
\label{zk}
 c_{K}= r_K \zeta^{-1}_{K}(2) \prod_{(p, D_K)=1}\Big (1-\frac 1{p^2} \prod_{\varpi | p} (1- N(\varpi)^{-2})^{-1} \Big ) \quad \mbox{and} \quad
 Z_K(u, \ell) =  \sum_{m=1}^{\infty} m^{-u} g\Big (\frac {m}{(m,|D_K|\ell)}\Big ),
\end{equation}
   where again $\varpi$ are primes in the corresponding number field $K$. \newline

Let $\Phi$ for a smooth, non-negative function compactly supported on $[1, 2]$ satisfying $\Phi(x) \leq 1$ for all $x$ and $\Phi(x) =1$
for $x\in [3/2,5/2]$, and define, for any complex number $s$,
\begin{equation*}
{\widehat \Phi}(s) = \int\limits_{0}^{\infty} \Phi(x)x^{s}\frac {\dif x}{x}.
\end{equation*}

  Our approach to the lower bounds needs the following result on the twisted first moments of cubic and quartic Dirichlet $L$-functions.
\begin{theorem}
\label{twistedfirstmoment}
  With the notations above, let $X$ be a large real number and $\ell$ a fixed positive integer.  Write $\ell$ uniquely as $\ell=\ell_1\ell^2_2\ell^3_3$ with $\ell_1, \ell_2$ square-free and $(\ell_1, \ell_2)=1$.  We have
\begin{align}
\label{eq:1}
 \sum_{(q,3)=1}\;  \sumstar_{\substack{\chi \shortmod{q} \\ \chi^3 = \chi_0}} L \left( \frac{1}{2}, \chi \right) \chi(\ell) \Phi\leg{q}{X} = c_{\mq(\omega)}  g(3\ell) X \frac 1{\sqrt{\ell^2_1\ell_2}}\widehat{\Phi}(1)Z_{\mq(\omega)} \left( \frac32, \ell \right) + O \left( X^{37/38 + \varepsilon}\ell^{2/3+\varepsilon} \right),
\end{align}
  where the asterisk on the sum over $\chi$ restricts the sum to primitive characters and $\chi_0$ denotes the principal character. \newline

  If we write $\ell$ uniquely as $\ell=\ell_1\ell^2_2\ell^3_3\ell^4_4$ with $\ell_1, \ell_2, \ell_3$ square-free, pair-wise coprime and assume the truth of the Lindel\"of hypothesis, then
\begin{align}
\label{eq:2}
 \sum_{(q,2)=1}\;  \sumstar_{\substack{\chi \shortmod{q} \\ \chi^4 = \chi_0}} L \left( \frac{1}{2}, \chi \right) \chi(\ell) \Phi\leg{q}{X} =c_{\mq(i)}  g(2\ell) X \frac 1{\sqrt{\ell^3_1\ell^2_2\ell_3}}\widehat{\Phi}(1)Z_{\mq(i)}(2, \ell) + O\left( X^{9/10 + \varepsilon}\ell^{1/4 + \varepsilon} \right),
\end{align}
where the asterisk on the sum over $\chi$ restricts the sum to primitive characters $\chi$ such that $\chi^2$ remains primitive.
\end{theorem}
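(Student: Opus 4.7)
The plan is to begin with the approximate functional equation for $L(\hf,\chi)$, which expresses the central value as essentially two smoothed Dirichlet polynomials $\sum_n \chi(n)n^{-1/2}V(n/\sqrt{q})$ of length about $\sqrt{q}$. Substituting this expansion into the left-hand sides of \eqref{eq:1} and \eqref{eq:2} and interchanging the order of summation reduces the problem to evaluating, for each positive integer $n$, the inner character sum $\sum_{q}\Phi(q/X)\sumstar_{\chi}\chi(n\ell)$ in which $\chi$ ranges over primitive cubic (resp.\ quartic with $\chi^2$ also primitive) characters of modulus $q$ coprime to $3$ (resp.\ to $2$).

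The second step is to translate this sum over rational moduli into a sum over the ring of integers of $K$. Primitive cubic Dirichlet characters of conductor coprime to $3$ are parametrized, up to units, by cubic residue symbols $\chi_c(\cdot)=\leg{\cdot}{c}_3$ with $c\in\mz[\om]$ squarefree and coprime to $3$; a similar parametrization by squarefree $c\in\mz[i]$ coprime to $2$ and not associate to any rational integer handles the quartic case. Applying cubic (resp.\ quartic) reciprocity exchanges the roles of $c$ and $n\ell$, converting the inner sum into a smoothed sum of $\leg{c}{n\ell}_3$ (resp.\ $\leg{c}{n\ell}_4$) over $c$ with norm near $X$.

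The main term arises from those $n$ for which $n\ell$ is a perfect cube (resp.\ fourth power), so that the residue symbol collapses to the principal character on $K$. Writing $\ell=\ell_1\ell_2^2\ell_3^3$ in the cubic case, the diagonal consists of $n=\ell_1^2\ell_2 k^3$, and $n^{-1/2}=\ell_1^{-1}\ell_2^{-1/2}k^{-3/2}$ produces the arithmetic factor $\ell_1^{-1}\ell_2^{-1/2}$ after summation in $k$, with the Euler product $Z_{\mq(\om)}(3/2,\ell)$ capturing the local behavior at the ramified and cube primes. The residue at $s=1$ of the resulting $c$-sum then supplies the constant $c_{\mq(\om)}g(3\ell)X\widehat{\Phi}(1)$. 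The quartic case proceeds analogously with $\ell=\ell_1\ell_2^2\ell_3^3\ell_4^4$, diagonal $n=\ell_1^3\ell_2^2\ell_3 k^4$, and the exponent $2$ in $Z_{\mq(i)}$.

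The principal obstacle is the off-diagonal contribution, where $n\ell$ is not a perfect power, in which case one expresses the inner sum via Mellin inversion as a contour integral of a Hecke $L$-function $L_K(s,\psi_{n\ell})$ attached to a nontrivial ray class character, and must bound this $L$-function on the critical line together with the relevant tails of the character sums. For the cubic case, Heath-Brown's cubic large sieve combined with the available unconditional subconvexity for Hecke $L$-functions over $\mq(\om)$ yields, after an optimized split of the $n$-range, the error $X^{37/38+\varepsilon}\ell^{2/3+\varepsilon}$. In the quartic case, comparable unconditional bounds for quartic Hecke $L$-functions are not available, so one appeals to the Lindel\"of hypothesis to obtain the square-root cancellation leading to $X^{9/10+\varepsilon}\ell^{1/4+\varepsilon}$. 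Keeping all bounds polynomially tame in $\ell$ throughout the applications of Cauchy--Schwarz, Poisson summation over the ring of integers of $K$, and contour shifts is the most delicate bookkeeping of the argument.
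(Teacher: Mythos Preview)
Your outline follows the same overall strategy as the paper (which in turn follows Baier--Young \cite{B&Y}): approximate functional equation, parametrise the characters by cubic (resp.\ quartic) residue symbols over $K$, extract the main term from the ``diagonal'' $n$ for which $n\ell$ is a perfect cube (resp.\ fourth power), and bound the off-diagonal via Hecke $L$-functions together with the cubic large sieve.  Two small corrections: first, no reciprocity is needed for the principal sum --- summing $\chi_c(n\ell)=\leg{n\ell}{c}_3$ over $c$ already \emph{is} a smoothed Hecke $L$-series $L_K(s,\psi_{n\ell})$ after M\"obius and Mellin inversion, so the step ``exchange the roles of $c$ and $n\ell$'' is superfluous; second, the relevant input for the off-diagonal is a mean-value estimate $\sum_{m\le M}m^{-1/2}|L(\hf+it,\psi_{m\ell d^3})|\ll M^{3/4+\varepsilon}\ell^{2/3+\varepsilon}(1+|t|)^{2/3+\varepsilon}$ coming from the cubic large sieve, not a pointwise subconvexity bound.

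The one genuine omission is the treatment of the \emph{dual} sum in the approximate functional equation.  Your sketch writes both pieces as $\sum_n\chi(n)n^{-1/2}V(n/\sqrt q)$ and then only analyses inner sums of the shape $\sum_\chi\chi(n\ell)$, which is $\mathcal M_1$.  The second piece carries the root number $\epsilon(\chi)=i^{-\af}q^{-1/2}\tau(\chi)$, so the inner sum is $\sum_\chi \epsilon(\chi)\chi(\ell)\overline\chi(m)$, and this cannot be handled by the same Hecke $L$-function argument; it requires the (substantial) cubic Gauss-sum machinery of \cite[\S3.3]{B&Y}, yielding $\mathcal M_2\ll X^{5/6}B^{1/6}+X^{2/3}B^{5/6}\ell^{1/3}$.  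Correspondingly the functional equation must be taken in \emph{unbalanced} form $AB=X$; the paper sets $A=X^{12/19}$, $B=X^{7/19}$ so that the $\mathcal M_1$-error $X^{1/2+\varepsilon}A^{3/4}\ell^{2/3+\varepsilon}$ and the $\mathcal M_2$-error $X^{2/3}B^{5/6}\ell^{1/3}$ balance at $X^{37/38+\varepsilon}$.  The symmetric choice $A=B=\sqrt q$ that you indicate would not give this exponent.  The quartic case is analogous, with the Lindel\"of hypothesis replacing the large-sieve mean value and the Gauss-sum estimates drawn from \cite{G&Zhao7}.
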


  With the aid of Theorem \ref{twistedfirstmoment}, we establish the following lower bounds for the families of $L$-functions under our consideration.
\begin{theorem}
\label{thmlowerbound}
With the notations above and the truth of the Lindel\"of hypothesis for Dirichlet $L$-functions associated with primitive quartic Dirichlet characters, we have, for large $X$ and all real numbers $k \geq 1/2$,
\begin{align} \label{lowerbounds}
  \sum_{\substack{(q,3)=1 \\ q \leq X}}\;  \sumstar_{\substack{\chi \shortmod{q} \\ \chi^3 = \chi_0}} \left| L \left( \frac{1}{2}, \chi \right) \right|^{2k}  \gg_k  X(\log X)^{k^2} \quad \mbox{and} \quad  \sum_{\substack{(q,2)=1 \\ q \leq X}}\;  \sumstar_{\substack{\chi \shortmod{q} \\ \chi^4 = \chi_0}} \left| L \left( \frac{1}{2}, \chi \right) \right|^{2k}  \gg_k  X(\log X)^{k^2}.
\end{align}
\end{theorem}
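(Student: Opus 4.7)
The plan is to apply the Heap-Soundararajan lower-bound principle \cite{H&Sound} to this setting, with Theorem \ref{twistedfirstmoment} furnishing the required twisted first moment. By positivity of $|L|^{2k}$ and dyadic decomposition, it suffices to prove the bound with the sharp cutoff $q \leq X$ replaced by the smooth weight $\Phi(q/X)$ used in Theorem \ref{twistedfirstmoment}. The cubic and quartic cases are handled in parallel: the crucial fact is that $\overline{\chi(\ell)} = \chi(\ell^2)$ for cubic $\chi$ and $\overline{\chi(\ell)} = \chi(\ell^3)$ for quartic $\chi$, so conjugates of Dirichlet polynomials in $\chi$ remain Dirichlet polynomials in $\chi$, keeping the twisted first moment accessible via Theorem \ref{twistedfirstmoment}.

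Fix a small $\theta > 0$ whose precise value is dictated by the error terms in Theorem \ref{twistedfirstmoment}. Partition the primes $p \leq X^\theta$ into $J \asymp \log \log X$ consecutive intervals $I_1, \ldots, I_J$, set $P_j(\chi) = \sum_{p \in I_j} \chi(p)/\sqrt p$, and form the truncated exponentials
\[
\mathcal{N}_j(\chi) = \sum_{r=0}^{R_j} \frac{(k P_j(\chi))^r}{r!}, \qquad \mathcal{N}(\chi) = \prod_{j=1}^J \mathcal{N}_j(\chi) = \sum_\ell a_\ell\, \chi(\ell),
\]
with truncation parameters $R_j$ chosen as in \cite{H&Sound} so that $\mathcal{N}_j$ closely approximates $\exp(kP_j(\chi))$ off a thin set of $\chi$, and with $\mathcal{N}(\chi)$ having length bounded by a fixed small power of $X$. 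By Hölder's inequality with conjugate exponents $\bigl(2k, \tfrac{2k}{2k-1}\bigr)$ (valid for $k \geq 1/2$),
\[
\sum_\chi |L(\hhalf, \chi)|^{2k} \;\geq\; \frac{\bigl|\sum_\chi L(\hhalf, \chi)\, \overline{\mathcal{N}(\chi)}\bigr|^{2k}}{\bigl(\sum_\chi |\mathcal{N}(\chi)|^{2k/(2k-1)}\bigr)^{2k-1}}.
\]

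Expanding $\overline{\mathcal{N}(\chi)} = \sum_\ell \overline{a_\ell}\, \chi(\ell^m)$ with $m = 2$ or $3$ and applying Theorem \ref{twistedfirstmoment} term by term, the cumulative error is $o(X)$ provided $\theta$ is taken sufficiently small, while the main terms aggregate into a twisted Euler product over primes in $\bigcup_j I_j$ which, via the standard Mertens-type analysis, is $\gg X(\log X)^{k^2}$. For the denominator, we follow the Heap-Sound template: restrict to the good set $\mathcal{G}$ of $\chi$'s where each $|\mathcal{N}_j(\chi)|$ is pointwise controlled; bound the exceptional set using Chebyshev applied to a high even moment of $P_j(\chi)$; and evaluate $\sum_\chi |\mathcal{N}(\chi)|^2$ by orthogonality of cubic (resp.\ quartic) characters in the given family. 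The non-integer exponent $2k/(2k-1)$ is handled by combining the pointwise bound on $|\mathcal{N}(\chi)|$ over $\mathcal{G}$ with the $L^2$ computation, yielding $\sum_\chi |\mathcal{N}(\chi)|^{2k/(2k-1)} \ll X(\log X)^{k^2/(2k-1)}$. Substituting into the Hölder inequality delivers $\sum_\chi |L|^{2k} \gg X(\log X)^{k^2}$.

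The principal obstacle is matching the mollifier length against the error terms in Theorem \ref{twistedfirstmoment}: in the cubic case, the error factor $\ell^{2/3+\varepsilon}$ is amplified to $\ell^{4/3+\varepsilon}$ by the $\bar\chi = \chi^2$ conjugation, forcing $\theta$ to be quite small, and the quartic case imposes an analogous (though milder) restriction. The Heap-Sound framework is robust to this restriction because the leading $(\log X)^{k^2}$ arises from the $J \asymp \log\log X$ partition length via Mertens' theorem, which is insensitive to the precise value of $\theta > 0$; still, verifying that the main term attains the full $(\log X)^{k^2}$ rather than a reduced power requires careful bookkeeping of contributions from each $I_j$. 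The boundary case $k = 1/2$, where Hölder degenerates, is treated separately by a Cauchy-Schwarz argument along the lines of the standard template in \cite{H&Sound, Gao2021-3}.
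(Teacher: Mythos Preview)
Your outline cites the right principle but does not implement it: the single-mollifier H\"older scheme you describe cannot produce the exponent $k^2$ for $k\neq 1$. With $\mathcal{N}(\chi)\approx\exp(kP(\chi))$ and $\overline{\chi(\ell)}=\chi(\ell^2)$ (cubic case), applying Theorem~\ref{twistedfirstmoment} term by term shows that at each prime $p$ the only contribution of size $1/p$ to $\sum_\chi L(\tfrac12,\chi)\overline{\mathcal{N}(\chi)}$ comes from $\ell=p$ with weight $k/p$, so the numerator is $\asymp X(\log X)^{k}$, not $X(\log X)^{k^2}$ as you claim. For the denominator, since $|\mathcal{N}|\approx\exp(k\,\Re P)$ with $\Re P$ approximately Gaussian of variance $\tfrac12\sum_p 1/p$, the true order of $\sum_\chi|\mathcal{N}|^{2k/(2k-1)}$ is $X(\log X)^{k^4/(2k-1)^2}$; a pointwise bound on $\mathcal G$ combined with the second moment cannot yield your claimed $X(\log X)^{k^2/(2k-1)}$. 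Feeding the correct orders into your H\"older inequality gives only the exponent $2k^2-k^4/(2k-1)=k^2-k^2(k-1)^2/(2k-1)$, strictly less than $k^2$ whenever $k\neq 1$.

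The missing idea---and the actual content of the Heap--Soundararajan principle as carried out in the paper---is to twist by a \emph{product of two} truncated exponentials with different parameters, namely $\mathcal{N}(\chi,k-1)\,\mathcal{N}(\overline\chi,k)$ (see Lemma~\ref{lem1}). In the twisted first moment $\sum_\chi L(\tfrac12,\chi)\mathcal{N}(\chi,k-1)\mathcal{N}(\overline\chi,k)$ one now picks up at each prime $p$ both the weight $k/p$ (from $a=1$, $b=p$, so the twist is $\chi(p^2)$) and the weight $k(k-1)/p$ (from $a=b=p$, so the twist is $\chi(p^3)$, a cube), totalling $k^2/p$; this is the computation of Proposition~\ref{Prop4}. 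On the H\"older side, the choice of parameters $(k-1,k)$ is what makes $|\mathcal{N}(\chi,k-1)\mathcal{N}(\overline\chi,k)|^{2k/(2k-1)}\approx|\mathcal{N}(\chi,k)|^{2}$ on the good set, so the dual moment lands on an \emph{even integer} power that is directly computable by orthogonality (Proposition~\ref{Prop6}), rather than the genuinely fractional moment your scheme requires.
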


For the case $0 \leq k <1/2$, the lower bounds principle requires knowledge on the twisted second moments of cubic and quartic Dirichlet $L$-functions and the same requirement is needed in the upper bounds principle of M. Radziwi{\l\l}  and K. Soundararajan \cite{Radziwill&Sound}. As an unconditional result on the twisted second moments is not currently known, we apply the method of Soundararajan in \cite{Sound2009} as well as its refinement by Harper in \cite{Harper} instead, obtaining some conditional upper bounds as follows.
\begin{theorem}
\label{thmupperbound}
   With the notations above and the truth of GRH, we have, for large $X$ and all real numbers $k \geq 0$,
\[  \sum_{\substack{(q,3)=1 \\ q \leq X}}\;  \sumstar_{\substack{\chi \shortmod{q} \\ \chi^3 = \chi_0}} \left| L \left( \frac{1}{2}, \chi \right) \right|^{2k}  \ll_k X(\log X)^{k^2} \quad \mbox{and} \quad \sum_{\substack{(q,2)=1 \\ q \leq X}}\;  \sumstar_{\substack{\chi \shortmod{q} \\ \chi^4 = \chi_0}} \left| L \left( \frac{1}{2}, \chi \right) \right|^{2k}  \ll_k  X(\log X)^{k^2} . \]
\end{theorem}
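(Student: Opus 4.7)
The plan is to apply the GRH-conditional upper bounds machinery of K. Soundararajan \cite{Sound2009} as refined by A. J. Harper \cite{Harper}, paralleling the treatment of the quadratic Dirichlet family in \cite{Gao2021-3}. The genuinely new input is a family-orthogonality estimate for primitive cubic (resp.\ primitive quartic with $\chi^2$ still primitive) Dirichlet characters; given this input, the rest is a routine adaptation of the combinatorial part of the Soundararajan--Harper template.

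First I would invoke the Soundararajan--Chandee pointwise bound: under GRH, for any primitive $\chi \pmod q$ and any $2 \leq x \leq q^2$,
\[
\log\bigl|L\bigl(\tfrac12,\chi\bigr)\bigr| \leq \mathrm{Re}\sum_{p\leq x}\frac{\chi(p)}{\sqrt p}\cdot\frac{\log(x/p)}{\log x} + \frac{\log q}{\log x} + O(1),
\]
up to a harmless contribution from prime squares. Choosing $x=X^{1/\beta}$ with $\beta$ a large fixed parameter renders the middle term $O(\beta)$, which is absorbed into a global constant once one raises to the $2k$-th power. Following Harper, I would partition the primes into intervals $I_j=(T_{j-1},T_j]$ with $T_j=\exp((\log X)^{\beta_j})$ for an increasing sequence $\beta_1<\cdots<\beta_J = 1/\beta$, let $P_j(\chi)$ denote the corresponding piece of the prime sum, and approximate $\exp(2k\,\mathrm{Re}\,P_j(\chi))$ on a ``typical'' set $\mathcal G_j$ by a truncated Taylor polynomial $Q_j(\chi)$ of degree $\sim k/(\beta_j - \beta_{j-1})$. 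On the complementary exceptional set, high-moment Markov bounds for $P_j$ show the set is so thin that even the trivial pointwise bound $(\log X)^{O(k)}$ contributes negligibly.

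The main moment calculation then reduces to estimating
\[
\sum_{\substack{(q,3)=1\\ q\leq X}}\sumstar_{\substack{\chi\shortmod q\\ \chi^3=\chi_0}}\prod_j \bigl|Q_j(\chi)\bigr|^2
\]
and its quartic analogue. Expanding the product yields a weighted sum over pairs $(n,m)$ supported on primes from $\bigcup_j I_j$ with $nm \leq X^{1-\epsilon}$, multiplied by the character sum $\sum_q \sumstar \chi(n)\overline{\chi(m)}$. The requisite orthogonality, proved by a Mellin-inversion argument in $\mq(\omega)$ (resp.\ $\mq(i)$) in the spirit of, but simpler than, Theorem \ref{twistedfirstmoment} (since there is no $L$-value to unfold), isolates a main term proportional to $X$ and supported on $n/m$ being a perfect cube (resp.\ fourth power), with a polynomial-in-$X$ saving off-diagonal. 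Combining this with the Mertens-type identity $\sum_{p \in I_j} 1/p = \beta_j - \beta_{j-1} + o(1)$ reproduces, via the standard multinomial bookkeeping of \cite{Harper}, the predicted main term of size $X(\log X)^{k^2}$.

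The main obstacle is establishing this family orthogonality for primitive cubic and, more delicately, for primitive quartic characters with $\chi^2$ primitive. Unlike the quadratic case, these families are parametrized by primary generators in $\mz[\omega]$ and $\mz[i]$, so one must carry arithmetic factors such as $g(\cdot)$ from \eqref{gc} through the computation while enforcing the primitivity conditions, and must show that the off-diagonal contribution from non-trivial cube/fourth-power residues saves a positive power of $X$. These technicalities aside, the orthogonality input is genuinely easier than the twisted first moments \eqref{eq:1} and \eqref{eq:2}, and once it is in place Harper's dyadic argument transfers essentially verbatim, yielding the sharp upper bound for all real $k \geq 0$.
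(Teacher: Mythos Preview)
Your overall strategy---Soundararajan's inequality plus Harper's iterative decomposition, with the family orthogonality of Lemma~\ref{PropDirpoly} as the arithmetic input---is exactly the route the paper takes, and you are right that this orthogonality (which the paper proves, not merely asserts) is easier than the twisted first moment. However, there is a genuine gap in your treatment of the exceptional sets. The ``trivial pointwise bound $(\log X)^{O(k)}$'' you invoke for $|L(\tfrac12,\chi)|^{2k}$ does not exist: under GRH one only has $\log|L(\tfrac12,\chi)|\ll \log q/\log\log q$, so $|L(\tfrac12,\chi)|^{2k}\le \exp(O_k(\log X/\log\log X))$, which overwhelms the measure saving $\exp(-c(\log\log X)^{2})$ available on the worst exceptional set $\mathcal{S}(0)$ (where already the first Dirichlet polynomial piece is large). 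The paper---following Harper's original argument---resolves this by a two-step bootstrap: one first proves the slightly weaker estimate $\sum|L(\tfrac12,\chi)|^{2k}\ll X(\log X)^{k^2+\varepsilon}$ via Soundararajan's distributional method (Proposition~\ref{propNbound} and Proposition~\ref{prop: upperbound}), and then feeds this a priori bound for the $4k$-th moment into Cauchy--Schwarz to control $\sum_{\chi\in\mathcal{S}(0)}|L(\tfrac12,\chi)|^{2k}\le (\#\mathcal{S}(0))^{1/2}\bigl(\sum|L(\tfrac12,\chi)|^{4k}\bigr)^{1/2}$, and likewise for the large-$m$ sets $\mathcal{P}(m)$. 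Without this bootstrap step your argument does not close.

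A second, smaller gap: dismissing the prime-square contribution as ``harmless'' is not quite right for the sharp exponent. Even under GRH for $L(s,\chi^2)$ (which holds since $\chi^2$ is primitive in both families), Lemma~\ref{lem: primesquareLbound} only gives $\sum_{p\le x^{1/2}}\chi(p^2)p^{-1-2\lambda/\log x}\log(x/p^2)/\log x=O(\log\log\log X)$, because the very small primes $p\le\log X$ see no cancellation; exponentiating costs a factor $(\log\log X)^{O(k)}$, which spoils the bound. The paper handles this by a further decomposition into the sets $\mathcal{P}(m)$ according to which dyadic block of the prime-square sum is atypically large, and again disposes of the bad $m$ via the Cauchy--Schwarz bootstrap just described.
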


We note here that the case $k=1$ in Theorem \ref{thmupperbound} improves the known results given in \cite[Theorem 1.3]{B&Y} and \cite[Theorem 1.3]{G&Zhao7} under GRH.  The above-mentioned approaches of Soundararajan \cite{Sound2009} and Harper \cite{Harper} also enable us to evaluate the twisted second moments under GRH. This, together with the lower bounds principle, allows us to extend the results in Theorem \ref{thmlowerbound} to the case $0 \leq k < 1/2$ conditionally.
\begin{theorem}
\label{thmlowerbound1}
    The bounds given in \eqref{lowerbounds} hold for $0 \leq k < 1/2$ under GRH.
\end{theorem}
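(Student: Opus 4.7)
The plan is to apply the Heap-Soundararajan lower bounds principle \cite{H&Sound} in the range $0 \leq k < 1/2$. In this regime the key new input, compared with the proof of Theorem \ref{thmlowerbound}, is a twisted second moment of $L(\tfrac12,\chi)$ over the two families, which is not known unconditionally but is accessible under GRH via the approaches of Soundararajan \cite{Sound2009} and Harper \cite{Harper} that already underlie Theorem \ref{thmupperbound}. Write $\mathcal{F}_X$ for one of the two families (cubic or quartic, conductor $q \le X$) and set $L_\chi := L(\tfrac12,\chi)$.

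I would carry out three steps. First, choose a Dirichlet polynomial $\mathcal{N}(\chi)$ of length $X^{\theta}$ with $\theta<1$ small enough that Theorem \ref{twistedfirstmoment} applies to each of its coefficients; the shape of $\mathcal{N}$ is the standard Heap-Soundararajan approximation to $L_\chi^{k-1}$, built from short prime sums on geometrically increasing scales, so that $L_\chi \mathcal{N}(\chi)$ models $L_\chi^{k}$. Second, apply H\"older's inequality to obtain
\begin{equation*}
\Bigl|\sum_{\chi\in\mathcal{F}_X} L_\chi\,\mathcal{N}(\chi)\Bigr|^{2} \leq \Bigl(\sum_{\chi\in\mathcal{F}_X} |L_\chi|^{2k}\Bigr)\Bigl(\sum_{\chi\in\mathcal{F}_X} |L_\chi|^{2-2k}\,|\mathcal{N}(\chi)|^{2}\Bigr),
\end{equation*}
and, since $2-2k>1$ in the range of interest, bound the last factor via a further H\"older step in terms of a twisted second moment $\sum|L_\chi|^{2}|\mathcal{N}(\chi)|^{2}$ and a pure moment of $\mathcal{N}$ that can be evaluated combinatorially from the Euler-product structure.

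Third, evaluate $\sum L_\chi\,\mathcal{N}(\chi)$ to a main term of order $X(\log X)^{k^{2}}$ (times the appropriate arithmetic constant) by expanding $\mathcal{N}$ and invoking the twisted first moment asymptotics \eqref{eq:1} and \eqref{eq:2}; the power savings in Theorem \ref{twistedfirstmoment} accommodate this for a suitable $\theta$. For the denominator, derive the GRH-conditional upper bound
\begin{equation*}
\sum_{\chi\in\mathcal{F}_X} |L_\chi|^{2}\,|\mathcal{N}(\chi)|^{2} \ll X(\log X)^{1+o(1)}\,\|\mathcal{N}\|_{2}^{2}
\end{equation*}
by inserting the factor $|\mathcal{N}(\chi)|^{2}$ into the Soundararajan-Harper majorization of $\log|L_\chi|$ by short prime sums, decomposing the prime support into dyadic blocks, and controlling each block exactly as in the proof of Theorem \ref{thmupperbound}. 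Combining these two inputs and inverting H\"older's inequality yields
\begin{equation*}
\sum_{\chi\in\mathcal{F}_X} |L_\chi|^{2k}\gg_{k} X(\log X)^{k^{2}}
\end{equation*}
for both families and all $0 \le k < 1/2$, as required. The main obstacle I anticipate lies in step three: one must verify that incorporating $|\mathcal{N}(\chi)|^{2}$ into the Soundararajan-Harper upper bound machinery produces not just the correct order of magnitude but the precise constants needed for the main terms on the two sides of H\"older's inequality to match, so that the $(\log X)^{k^{2}}$ factor is faithfully recovered after inversion. The limiting case $k=0$ is handled separately by counting primitive characters in $\mathcal{F}_X$.
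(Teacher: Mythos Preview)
Your proposal has a genuine gap in the evaluation of the twisted first moment. With $\mathcal{N}(\chi)$ chosen to approximate $L_\chi^{k-1}$ alone, the product $L_\chi\mathcal{N}(\chi)$ models the \emph{complex} power $L_\chi^{k}$ rather than $|L_\chi|^{2k}$, and for a unitary family the average of $L_\chi^{k}$ carries no logarithmic weight. Concretely, expand $\mathcal{N}(\chi)=\sum_n a_n n^{-1/2}\chi(n)$ and apply Theorem~\ref{twistedfirstmoment}: each twist $\chi(n)$ contributes a factor $(n_1^2 n_2)^{-1/2}$ to the main term (writing $n=n_1 n_2^2 n_3^3$ in the cubic case), so that at every prime the resulting Euler factor is $1+O(p^{-3/2})$. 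Hence $\sum_\chi L_\chi \mathcal{N}(\chi)\asymp X$, not $X(\log X)^{k^2}$ as you assert, and your H\"older scheme then collapses to the trivial bound $\sum_\chi |L_\chi|^{2k}\gg X$.

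The paper's remedy is to anchor on $\sum_\chi L(\tfrac12,\chi)\,\mathcal{M}(\chi,k-1)\mathcal{M}(\bar\chi,k)$, which models $\sum_\chi |L_\chi|^{2k}$: since $\bar\chi=\chi^2$ for cubic characters, the twist is by $\chi(ab^2)$, and the near-diagonal $a=b$ supplies the $k^2/p$ term in the Euler product that generates $(\log X)^{k^2}$ (see the computation leading to \eqref{6.02}). This forces the three-term H\"older inequality of Lemma~\ref{lem1'}, whose extra factor is the twisted second moment $\sum_\chi |L(\tfrac12,\chi)|^2|\mathcal{M}(\chi,k-1)|^2$. Under GRH this is bounded by $X(\log X)^{k^2}$ \emph{sharply}, via the Harper refinement carried out in Section~\ref{sect 3.4}, not by the cruder $X(\log X)^{1+o(1)}\|\mathcal{N}\|_2^2$ you propose, which would again spoil the exponent. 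The three estimates \eqref{LMM}--\eqref{LM} then balance exactly to yield the claimed lower bound.
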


Combining Theorems \ref{thmlowerbound}--\ref{thmlowerbound1}, we readily deduce the following result concerning the order of magnitude of the $2k$-th moment of the family of $L$-functions of our interest.
\begin{theorem}
\label{thmorderofmag}
   With the notations above and the truth of GRH, we have, for large $X$ and all real numbers $k \geq 0$,
\[   \sum_{\substack{(q,3)=1 \\ q \leq X}}\;  \sumstar_{\substack{\chi \shortmod{q} \\ \chi^3 = \chi_0}} \left| L \left( \frac{1}{2}, \chi \right) \right|^{2k}   \asymp_k  X(\log X)^{k^2}  \quad \mbox{and} \quad \sum_{\substack{(q,2)=1 \\ q \leq X}}\;  \sumstar_{\substack{\chi \shortmod{q} \\ \chi^4 = \chi_0}} \left| L \left( \frac{1}{2}, \chi \right) \right|^{2k}   \asymp_k  X(\log X)^{k^2}. \]
\end{theorem}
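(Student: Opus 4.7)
The plan is to assemble Theorem \ref{thmorderofmag} directly by concatenating the upper and lower bounds already furnished by Theorems \ref{thmlowerbound}, \ref{thmlowerbound1}, and \ref{thmupperbound}. Since we are assuming GRH throughout, I first note that GRH for Dirichlet $L$-functions implies the Lindel\"of hypothesis for the same family, so every conditional hypothesis appearing in the cited theorems is automatically in force.

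For the upper direction, I would simply invoke Theorem \ref{thmupperbound} applied to both the cubic and quartic families, which under GRH yields $\ll_k X(\log X)^{k^2}$ uniformly for every real $k\ge 0$. No further argument is needed here.

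For the lower direction I would split the range at $k=1/2$. On $[1/2,\infty)$ the bound $\gg_k X(\log X)^{k^2}$ is delivered by Theorem \ref{thmlowerbound}; for the cubic family this is unconditional, and for the quartic family the only hypothesis used is the Lindel\"of hypothesis for primitive quartic Dirichlet $L$-functions, which again follows from GRH. On $[0,1/2)$ the same lower bound for both families is supplied by Theorem \ref{thmlowerbound1}, which is stated directly under GRH. Matching these two lower bounds with the upper bound of the previous paragraph produces the asserted two-sided estimate $\asymp_k X(\log X)^{k^2}$ in both families for every real $k\ge 0$.

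There is no genuine obstacle in this argument: the entire statement reduces to observing that GRH subsumes the Lindel\"of-type hypothesis appearing in the earlier theorems, and then concatenating the ranges $0\le k<1/2$ and $k\ge 1/2$. Accordingly, the proof of Theorem \ref{thmorderofmag} is a bookkeeping combination of the three preceding theorems and contains no additional analytic input.
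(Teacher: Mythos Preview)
Your proposal is correct and matches the paper's own approach exactly: the paper states that Theorem \ref{thmorderofmag} follows by ``combining Theorems \ref{thmlowerbound}--\ref{thmlowerbound1},'' i.e., by pairing the GRH upper bound of Theorem \ref{thmupperbound} with the lower bounds of Theorems \ref{thmlowerbound} and \ref{thmlowerbound1} across the ranges $k\ge 1/2$ and $0\le k<1/2$. Your observation that GRH subsumes the Lindel\"of hypothesis needed in the quartic case is the only bookkeeping point, and the paper treats it as equally routine.
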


We note here that one can readily deduce from the above theorem that the number of primitive cubic or quartic Dirichlet characters $\chi$ with conductor $\leq X$ such that the corresponding $L(1/2, \chi) \neq 0$ is $\gg X/\log X$ under GRH, via standard arguments as in the proof of \cite[Corollary 1.2]{B&Y}. In fact, by incorporating the above mentioned Harper's method in \cite{Harper}, one may further compute the mollified second moment of the $L$-functions under consideration to obtain a positive proportion of non-vanishing result. This approach was used by S. Lester and M. Radziwi{\l\l} in \cite{LR21} for the mollified moments of quadratic twists of modular $L$-functions and in the function field setting by C. David, A. Florea and M. Lalin \cite{DFL21} to establish a positive proportion non-vanishing result of cubic $L$-functions. In \cite{DFL21}, it was also asserted that the methods can be used to give a positive proportion nonvanishing result in the number field setting on GRH. \newline

Motivated by the work in \cite{LR21} and \cite{DFL21}, we end the introduction by giving the following theorem which states that positive proportions of the members in both of the families of $L$-functions associated with cubic and quartic Dirichlet characters do not vanish at the central point.

\begin{theorem}
\label{coro:nonvanish}
Assume the truth of GRH.  There exist infinitely many primitive Dirichlet characters $\chi$ of order $3$ and $4$ such that $L(1/2, \chi) \neq 0$.  More precisely, the number of such characters with conductor $\leq X$ is $\gg X$.
\end{theorem}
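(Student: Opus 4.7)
The plan is to follow the standard mollifier plus Cauchy--Schwarz strategy implemented by Lester--Radziwi{\l\l} \cite{LR21} and David--Florea--Lalin \cite{DFL21}, taking the twisted first moments of Theorem \ref{twistedfirstmoment} as the arithmetic input and the Soundararajan--Harper method \cite{Sound2009, Harper} as the GRH analytic input. I describe the cubic case; the quartic case is parallel, using \eqref{eq:2} in place of \eqref{eq:1} and the quartic half of Theorem \ref{thmupperbound}, with the usual technical caveat that one restricts to those primitive quartic $\chi$ with $\chi^2$ also primitive.

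For a small fixed $\theta > 0$ to be calibrated, introduce the short mollifier
\begin{align*}
M(\chi) = \sum_{\substack{n \leq X^{\theta} \\ (n,3)=1}} \frac{\mu(n)\, P\!\left(\frac{\log(X^{\theta}/n)}{\log X^{\theta}}\right) \chi(n)}{\sqrt{n}},
\end{align*}
where $P$ is a polynomial with $P(0)=0$, $P(1)=1$, to be optimized at the end. The M\"obius weight makes $M(\chi)$ a short Dirichlet polynomial approximation to $L(1/2,\chi)^{-1}$. Form the mollified first and second moments
\begin{align*}
\Sigma_1(X) &= \sum_{(q,3)=1} \sumstar_{\chi^3 = \chi_0} L\!\left(\tfrac12,\chi\right) M(\chi) \Phi\!\leg{q}{X},\\
\Sigma_2(X) &= \sum_{(q,3)=1} \sumstar_{\chi^3 = \chi_0} \left|L\!\left(\tfrac12,\chi\right) M(\chi)\right|^2 \Phi\!\leg{q}{X}.
\end{align*}
To evaluate $\Sigma_1(X)$, open $M(\chi)$ and apply \eqref{eq:1} with $\ell=n$ for each $n \leq X^\theta$. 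The arithmetic factor $g(3n)\, Z_{\mq(\omega)}(3/2,n)/\sqrt{n_1^2 n_2}$ combines multiplicatively with $\mu(n)$, and a routine Mellin--Perron contour computation extracts a leading term $c(\theta,P)\, X$ with explicit $c(\theta,P) > 0$. The accumulated error is $\sum_{n \leq X^\theta} n^{-1/2}\, X^{37/38+\varepsilon} n^{2/3+\varepsilon} \ll X^{37/38 + 7\theta/6 + \varepsilon}$, which is $o(X)$ provided $\theta$ is chosen small enough (e.g. $\theta < 3/133$). Hence $\Sigma_1(X) \sim c(\theta,P)\, X$.

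For $\Sigma_2(X)$, adapt Harper's refinement \cite{Harper} of Soundararajan's method: on GRH, for any small $\kappa > 0$, $\log|L(1/2,\chi)|$ is dominated pointwise by a short prime-power sum of length $X^\kappa$ plus a controlled remainder. Partitioning the characters into dyadic ranges according to the magnitude of that prime sum and applying exponential bounds on each range, then multiplying by $|M(\chi)|^2$ (itself a Dirichlet polynomial of length at most $X^{2\theta}$), one reduces the average to moments of short Dirichlet polynomials of the form $\chi(m)\overline{\chi(n)}$ summed over the cubic family. Provided $\theta + O(\kappa)$ is small, orthogonality of primitive cubic characters eliminates the off-diagonal contribution, and the M\"obius coefficients in $M(\chi)$ are engineered to absorb the diagonal $\log X$ that otherwise appears in the unmollified second moment, yielding $\Sigma_2(X) \ll X$.

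Finally, the Cauchy--Schwarz inequality gives $|\Sigma_1(X)|^2 \leq N_\star(X)\, \Sigma_2(X)$, where $N_\star(X)$ denotes the number of primitive cubic characters $\chi$ of conductor $\leq 2X$ with $L(1/2,\chi) \neq 0$, since only such $\chi$ contribute to $\Sigma_1(X)$. Combining the previous two steps yields $N_\star(X) \gg X$, as claimed. The main obstacle is the bound $\Sigma_2(X) \ll X$: one has to track the interplay between the mollifier $|M(\chi)|^2$ and Harper's decomposition of $\log|L|$ with some care, and simultaneously calibrate $\theta$ to be small enough that the $n^{2/3+\varepsilon}$ loss in \eqref{eq:1} is absorbed in $\Sigma_1(X)$, yet large enough that $c(\theta,P)$ remains strictly positive; the analogous balance in the quartic case is slightly less favorable because of the $n^{1/4+\varepsilon}$ factor in \eqref{eq:2} and the primitivity restriction on $\chi^2$, but the same scheme goes through.
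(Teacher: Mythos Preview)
Your overall architecture (mollifier plus Cauchy--Schwarz) matches the paper, but your choice of mollifier is genuinely different. The paper does \emph{not} use a classical M\"obius mollifier; it takes the truncated-exponential object $\mathcal{M}(\chi,-1)=\prod_i E_{e^2k\alpha_i^{-3/4}}(-\mathcal{P}'_i(\chi))$ already built in Section~\ref{Sec: Pf of Thmlowerbounds}, observes that the estimates \eqref{LMM} and \eqref{LM} remain valid at $k=0$, and reads off $\Sigma_1\gg X$ and $\Sigma_2\ll X$ in one line. The virtue of that choice is that the mollifier factors over exactly the same prime intervals $(X^{\alpha_{i-1}},X^{\alpha_i}]$ as the Harper decomposition, so the mollified second moment \eqref{LM} is handled by the same splitting into $\mathcal{S}(j)\cap\mathcal{P}(m)$ used for Theorem~\ref{thmupperbound}, with no new work.

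Your M\"obius mollifier has no such compatibility, and this is where your sketch is thinnest. The sentence ``orthogonality of primitive cubic characters eliminates the off-diagonal contribution, and the M\"obius coefficients in $M(\chi)$ are engineered to absorb the diagonal $\log X$'' describes what happens when one has an \emph{asymptotic} for the twisted second moment, as in the classical quadratic or $\zeta$-function settings. Here no such asymptotic is available; Harper's method delivers only upper bounds, and the interaction between $|M(\chi)|^2$ (which does not factor over Harper's prime ranges) and the exponential approximants $|E_\ell(k\mathcal{M}_{i,j})|^2$ has to be worked out from scratch. This is not obviously impossible, but it is precisely the step that the Radziwi{\l\l}--Soundararajan mollifier is designed to bypass, and it is also the mollifier actually used in the references \cite{LR21,DFL21} you cite. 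Your remark that $\theta$ must be ``large enough that $c(\theta,P)$ remains strictly positive'' is a further symptom: $c(\theta,P)>0$ for any $\theta>0$, so the real tension is whether $\Sigma_2(X)\ll_\theta X$ can be established at all with your mollifier and the tools at hand.
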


\section{Preliminaries}
\label{sec 2}

   In this section, we gather several auxiliary results required in the course of our proofs.
\subsection{Sums over primes}
\label{sec2.4}

  We first note the following result on various sums over prime numbers.
\begin{lemma}
\label{RS} Let $x \geq 2$. We have, for some constant $b$,
\begin{equation} \label{merten}
\sum_{p\le x} \frac{1}{p} = \log \log x + b+ O\Big(\frac{1}{\log x}\Big).
\end{equation}
 Also, for any integer $j \geq 1$, we have
\begin{equation} \label{mertenpartialsummation}
\sum_{p\le x} \frac {(\log p)^j}{p} = \frac {(\log x)^j}{j} + O((\log x)^{j-1}).
\end{equation}
 Let $\chi$ be a primitive Dirichlet character modulo $q$ and assume that GRH hold for $L(s, \chi)$, we have
\begin{align}
\label{PIT}
 \sum_{p \leq x }\log p \cdot \chi(p) =\delta_{\chi=\chi_0}x+O(\sqrt{x} \left(\log 2qx)^2 \right),
\end{align}
 where we define $\delta_{\chi=\chi_0}=1$ if $\chi=\chi_0$ and $\delta_{\chi=\chi_0}=0$ otherwise.
\end{lemma}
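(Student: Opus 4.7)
The plan is to treat the three estimates as classical results and derive each by standard analytic number theory techniques, with partial summation being the main tool for the first two and the explicit formula for the third.

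First I would establish \eqref{merten}. The starting point is the Chebyshev/Mertens-type identity
\[
\sum_{p\le x}\frac{\log p}{p}=\log x+A+O\!\left(\frac{1}{\log x}\right),
\]
for some absolute constant $A$, which follows from the prime number theorem $\psi(x)=x+O(x/\log x)$ after subtracting the negligible prime-power contribution and applying partial summation. I would then apply partial summation again, writing
\[
\sum_{p\le x}\frac1p=\sum_{p\le x}\frac{\log p}{p}\cdot\frac{1}{\log p},
\]
and integrating against the monotone weight $1/\log t$ from $2$ to $x$. The main term integrates to $\log\log x$, and a constant $b$ absorbs the convergent portion of the error, yielding the claim. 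For \eqref{mertenpartialsummation} I would feed the same starting identity into partial summation against the weight $(\log t)^{j-1}$: the Stieltjes integral $\int_2^x (\log t)^{j-1}\,d(\log t)=(\log x)^j/j+O(1)$ supplies the main term, while the error from the $O(1/\log x)$ remainder and boundary terms is absorbed into $O((\log x)^{j-1})$.

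For \eqref{PIT} I would apply the explicit formula for $\psi(x,\chi)=\sum_{n\le x}\Lambda(n)\chi(n)$. For a primitive character $\chi \bmod q$, truncated Perron's formula combined with a standard contour shift gives
\[
\psi(x,\chi)=\delta_{\chi=\chi_0}x-\sum_{|\gamma|\le T}\frac{x^\rho}{\rho}+O\!\left(\frac{x(\log qx)^2}{T}\right),
\]
where $\rho=\beta+i\gamma$ ranges over nontrivial zeros of $L(s,\chi)$. Under GRH, $\beta=1/2$, so $|x^\rho/\rho|\le \sqrt{x}/|\gamma|$; combined with the zero-counting bound $N(T+1,\chi)-N(T,\chi)\ll\log(q(|T|+2))$, the sum over zeros is $\ll\sqrt{x}(\log qT)^2$. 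Choosing $T=\sqrt{x}$ balances the errors, and peeling off the prime-power terms $p^k$ with $k\ge2$, which contribute $O(\sqrt x)$, yields the stated estimate for $\sum_{p\le x}\log p\cdot \chi(p)$.

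The main obstacle is purely technical rather than conceptual: the GRH estimate \eqref{PIT} requires careful truncation in Perron's formula and a uniform handling of the implicit constants in $q$ so that the final error depends correctly on $\log(2qx)$ rather than only on $\log x$; this is the step where one has to invoke the standard density estimate for zeros of $L(s,\chi)$ in horizontal strips. The first two estimates are routine partial-summation consequences of PNT and present no difficulty beyond bookkeeping.
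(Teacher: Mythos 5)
Your proposal is correct and matches the substance of the paper's treatment: the paper simply cites these as standard facts (Mertens' theorem as in Montgomery--Vaughan, \eqref{mertenpartialsummation} by partial summation, and \eqref{PIT} from Iwaniec--Kowalski, Theorem 5.15), and your arguments are precisely the classical proofs behind those citations. The only cosmetic difference is that you derive \eqref{mertenpartialsummation} by partial summation from Mertens' first theorem $\sum_{p\le x}(\log p)/p=\log x+A+O(1/\log x)$, whereas the paper derives it from \eqref{merten}; both routes give the stated error term.
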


\begin{proof}
The formula \eqref{merten} is a well-known formula due to Mertens (see \cite[Theorem 2.7]{MVa1}.) and \eqref{mertenpartialsummation} follows from \eqref{merten} by partial summation.  \eqref{PIT} is given in \cite[Theorem 5.15]{iwakow}.
\end{proof}

\subsection{Cubic and quartic Dirichlet characters}
    Recall that we write $K$ for either $\mq(\omega)$ or $\mq(i)$. We further use $\mathcal{O}_K$ to denote the ring of integers in $K$ and $U_K$ the group of units in $\mathcal{O}_K$.  It is well-known that $K$ has class number one with $\mathcal{O}_{K}=\mz[\omega]$ or $\mz[i]$.  Recall also that every ideal in $\intz[\omega]$ co-prime to $3$ has a unique generator congruent to $1$ modulo $3$
(see \cite[Proposition 8.1.4]{BEW}) and every ideal in $\intz[i]$ coprime to $2$ has a unique generator congruent to $1$ modulo $(1+i)^3$
(see the paragraph above Lemma 8.2.1 in \cite{BEW}). These generators are called primary. \newline

 For $K=\mq(\omega)$, the cubic residue symbol $\leg{\cdot}{\varpi}_3$ is defined for any prime $\varpi$ co-prime to $3$ in $\mathcal{O}_{K}$, such that
 we have $\leg{a}{\varpi}_3 \equiv a^{(N(\varpi)-1)/3} \pmod{\varpi}$ with $\leg{a}{\varpi}_3 \in \{ 1, \omega, \omega^2 \}$ for any $a \in \mathcal{O}_{K}$, $(a, \varpi)=1$. We also define $\leg{a}{\varpi}_3 =0$ if $\varpi | a$. The definition of the cubic symbol is then extended multiplicatively to $\leg{\cdot}{n}_3$ for any composite $n$ with $(N(n), 3)=1$.
In like manner, for $K=\mq(i)$, the quartic residue symbol $\leg{\cdot}{\varpi}_4$ is defined for any prime $\varpi$ co-prime to $2$
in $\mathcal{O}_{K}$ by $\leg{a}{\varpi}_4 \equiv
a^{(N(\varpi)-1)/4} \pmod{\varpi}$ with $\leg{a}{\varpi}_4 \in \{ \pm 1, \pm i \}$ for any $a \in \mathcal{O}_{K}$, $(a, \varpi)=1$ and $\leg{a}{\varpi}_4 =0$ when $\varpi | a$. Thus the quartic symbol $\leg{\cdot}{n}_4$ can be defined any composite $n$ with $(N(n), 2)=1$, extending $\leg{\cdot}{\varpi}_4$ multiplicatively.  Naturally, we set $\leg{\cdot }{n}_3=\leg{\cdot }{n}_4=1$ for $n \in U_{K}$. \newline

Combining the statements of \cite[Lemma 2.1]{B&Y} and \cite[Lemma 2.1]{G&Zhao7}, we have the following description of primitive cubic and quartic Dirichlet characters.
\begin{lemma}
\label{lemma:cubicclass}
 The primitive cubic Dirichlet characters of conductor $q$ coprime to $3$ are of the form $\chi_n:m \rightarrow \leg{m}{n}_3$ for some $n \in \mz[\omega]$, $n \equiv 1 \pmod{3}$, $n$ square-free and not divisible by any rational primes, with norm $N(n) = q$.  The primitive quartic Dirichlet characters of conductor $q$ coprime to $2$ such that their squares remain primitive are of the form $\chi_n:m \mapsto \leg{m}{n}_4$ for some $n \in \mz[i]$, $n \equiv 1 \pmod{(1+i)^3}$, $n$ square-free and not divisible by any rational primes, with norm $N(n) = q$.
\end{lemma}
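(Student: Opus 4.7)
The plan is to decompose a character of order $3$ or $4$ via the Chinese Remainder Theorem and classify the local components at each prime dividing the conductor. Let $\chi$ be a primitive Dirichlet character of order $3$ (resp.\ $4$) with conductor $q$ coprime to $3$ (resp.\ $2$). Writing $q = \prod_p p^{a_p}$, I would factor $\chi = \prod_p \chi_p$ where $\chi_p$ is a character modulo $p^{a_p}$, and aim to identify each $\chi_p$ with a cubic (resp.\ quartic) residue symbol attached to a primary prime above $p$ in $\mz[\omega]$ (resp.\ $\mz[i]$).

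First I would show that $q$ is squarefree and that every prime factor $p$ satisfies $3 \mid p-1$ (resp.\ $4 \mid p-1$). Since $p$ is coprime to the order of $\chi$, the group $(\mz/p^a\mz)^*$ is cyclic of order $p^{a-1}(p-1)$, and its subgroup of characters of order dividing $3$ (resp.\ $4$) is nontrivial only when $3 \mid p-1$ (resp.\ $4 \mid p-1$); moreover that subgroup factors through the quotient $(\mz/p\mz)^*$, so any local character of the required order modulo $p^a$ already has conductor dividing $p$, and primitivity of $\chi$ forces $a_p = 1$. The divisibility conditions on $p-1$ are in turn equivalent to $p$ splitting as $p = \varpi\bar\varpi$ in the relevant ring of integers, with $\varpi$ primary.

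Next I would identify each local component with a residue symbol. Since $\leg{\cdot}{\varpi}_3$ and $\leg{\cdot}{\varpi}_4$ depend on their argument only modulo $\varpi$, each descends to a nontrivial character on $(\mz/p\mz)^*$; the conjugate choices $\varpi$ and $\bar\varpi$ give complex conjugate characters, and in the cubic case they together with the trivial character exhaust the full group of cubic characters modulo $p$. In the quartic case the character group of order dividing $4$ modulo $p$ additionally contains the Legendre symbol, but this is excluded by the hypothesis that $\chi^2$ remains primitive: since $\leg{\cdot}{\varpi}_4^2$ reduces to the Legendre symbol modulo $p$, primitivity of $\chi^2$ is equivalent to the statement that every local component $\chi_p$ has order exactly $4$, leaving only the quartic residue symbol options. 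Setting $n = \prod_{p \mid q} \varpi_p$ for the chosen primary primes produces a primary squarefree element of the corresponding ring, divisible by no rational prime since no conjugate pair is ever selected, with $N(n) = q$, and $\chi = \chi_n$ by multiplicativity of the symbols.

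The converse direction, that every such $n$ yields a primitive character of the stated order with conductor $N(n)$, is obtained by running the same local identification in reverse and is routine. I anticipate no serious obstacle, since the argument is in essence the synthesis of the classifications already carried out in \cite[Lemma 2.1]{B&Y} and \cite[Lemma 2.1]{G&Zhao7}. The only conceptually subtle step is the quartic extra hypothesis, where the interaction between primitivity of $\chi^2$ and the exclusion of Legendre-symbol local components is the heart of the matter and is what makes the two stated descriptions parallel.
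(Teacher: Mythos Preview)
Your argument is correct and complete in outline; the only remark is that the paper itself does not give a proof at all, but simply records the lemma as the combination of \cite[Lemma 2.1]{B&Y} and \cite[Lemma 2.1]{G&Zhao7}. What you have written is precisely the standard argument underlying those references: CRT reduction to prime-power moduli, the observation that cubic (resp.\ quartic) characters modulo $p^a$ with $p\nmid 6$ factor through $(\mz/p\mz)^*$ (forcing $q$ squarefree and $p\equiv 1$ modulo $3$ or $4$), and the identification of the nontrivial local characters with the residue symbols attached to the two primary primes above $p$. Your handling of the quartic hypothesis is also the right one: primitivity of $\chi^2$ is exactly the statement that no local component is quadratic, which rules out the Legendre-symbol option and leaves only the two genuine quartic residue symbols at each prime. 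So your proposal is not a different route, but rather an explicit unpacking of what the paper outsources to the cited lemmas.
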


  We reserve $\psi_{m}$ for the Hecke characters in $K$ such that $\psi_{m}((n)) = \leg{m}{n}_3$ for $n \in \mq(\omega)$ coprime to $3$
   or $\psi_{m}((n)) = \leg{m}{n}_4$ for $n \in \mq(i)$ coprime to $2$.
 It is shown in \cite[Section 2.1]{B&Y} and \cite[Section 2.1]{G&Zhao7} that $\psi_m$ is either a cubic  Hecke character of trivial infinite type
  modulo $9m$ or a
  quartic Hecke character of trivial infinite type modulo $16m$. We define $\delta_{n=\text{cubic}}$ to be $1$ or $0$ depending on whether $n$ equals a cube or not, and we define $\delta_{n=\text{fourth power}}$ similarly.  Similar to \cite[Proposition 1]{Radziwill&Sound}, we need estimations on smoothed sums of cubic and quartic characters in this paper. Our next result is analogue to \cite[Lemma 2]{H&P}, which concerned with smoothed version of the classical P\'olya inequality over number fields.
\begin{lemma}
\label{PropDirpoly}  With the notations above, for large $X$ and any positive integer $c$, we have
\begin{align}
\label{cubicandquarticcharsum}
\begin{split}
\sum_{(q,3)=1} \ \sumstar_{\substack{\chi \shortmod{q} \\ \chi^3 = \chi_0}} \chi(c) \Phi\Big(\frac{q}{X}\Big)=&
\displaystyle \delta_{c=\text{cubic}}c_{\mq(\omega)} {\widehat \Phi}(1) X g(3c) + O( X^{1/2+\varepsilon}c^{1/2+\varepsilon} ), \\
\sum_{(q,2)=1} \ \sumstar_{\substack{\chi \shortmod{q} \\ \chi^4 = \chi_0}} \chi(c) \Phi\Big(\frac{q}{X}\Big)=&
\displaystyle \delta_{c=\text{fourth power}}c_{\mq(i)} {\widehat \Phi}(1) X g(2c) + O( X^{1/2+\varepsilon}c^{1/2+\varepsilon} ).
\end{split}
\end{align}
Here $c_{\mq(\omega)}$ and $c_{\mq(i)}$ are defined in \eqref{zk} and $g$ in \eqref{gc}.
\end{lemma}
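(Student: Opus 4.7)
The plan is to treat the cubic case in detail; the quartic case is structurally identical.  By Lemma~\ref{lemma:cubicclass}, after unfolding the asterisked sum and identifying $\leg{c}{n}_3=\psi_c((n))$ with the value of the cubic Hecke character $\psi_c$ modulo $9c$ introduced in Section~\ref{sec 2}, the left-hand side of \eqref{cubicandquarticcharsum} (cubic case) becomes, up to the contribution of $n = 1$ (which vanishes for large $X$ since $\Phi$ is supported on $[1,2]$),
\begin{align*}
T(X):=\sum_{\substack{\mathfrak a\subseteq\mz[\omega]\\ (\mathfrak a,3)=1,\ \mathfrak a\ \text{squarefree}\\ \mathfrak a\ \text{with no rational prime divisor}}}\psi_c(\mathfrak a)\,\Phi\leg{N(\mathfrak a)}{X}.
\end{align*}

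Apply Mellin inversion, $\Phi(y)=\frac{1}{2\pi i}\int_{(2)}\widehat{\Phi}(s)y^{-s}\,\dif s$, to obtain
\begin{align*}
T(X)=\frac{1}{2\pi i}\int_{(2)}\widehat{\Phi}(s)X^{s}A(s)\,\dif s,\qquad A(s)=\sum_{\mathfrak a}\psi_c(\mathfrak a)N(\mathfrak a)^{-s},
\end{align*}
the sum being over the same set of ideals.  Both restrictions on $\mathfrak a$ being multiplicative, $A(s)$ admits an Euler product running only over primary primes $\varpi$ above split rational primes $p\equiv 1\pmod 3$; comparing this against the Euler product of $L(s,\psi_c)$ yields a factorisation
\begin{align*}
A(s)=L(s,\psi_c)\,F_c(s),
\end{align*}
where $F_c(s)$ is an explicit Euler product absolutely convergent (and of size $O(c^\varepsilon)$) on $\Re(s)>1/2$, with local factors at primes dividing $3c$ matching those prescribed in \eqref{gc}.

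Shift the contour to $\Re(s)=1/2+\varepsilon$.  If $c$ is a perfect cube, then $\psi_c$ restricts to the principal Hecke character on ideals coprime to $3c$, so $L(s,\psi_c)$ equals $\zeta_{\mq(\omega)}(s)$ times finitely many local factors and has a simple pole at $s=1$ with residue $r_{\mq(\omega)}$.  The crossed pole produces the main term: combining the residue with $F_c(1)$ and the missing local factor coming from $\zeta_{\mq(\omega)}^{-1}(2)$ reproduces $c_{\mq(\omega)}\widehat{\Phi}(1)Xg(3c)$ as prescribed by \eqref{zk} and \eqref{gc}.  If $c$ is not a perfect cube, $\psi_c$ is a non-principal Hecke character of conductor dividing $(9c)$, so $L(s,\psi_c)$ is entire and no residue is collected (explaining the factor $\delta_{c=\text{cubic}}$ in \eqref{cubicandquarticcharsum}).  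In either case the remaining integral along $\Re(s)=1/2+\varepsilon$ is bounded using the convexity estimate $|L(\tfrac12+it,\psi_c)|\ll(c^{2}(1+|t|))^{1/4+\varepsilon}$, the bound $F_c(\tfrac12+it)\ll c^\varepsilon$, and the rapid decay of $\widehat{\Phi}$ on vertical lines, yielding the error $O(X^{1/2+\varepsilon}c^{1/2+\varepsilon})$.

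The quartic case is handled identically after the substitutions $\mz[\omega]\to\mz[i]$, $n\equiv 1\pmod 3\to n\equiv 1\pmod{(1+i)^{3}}$, cubic symbol $\to$ quartic symbol, $9c\to 16c$, and $g(3c),\,c_{\mq(\omega)}\to g(2c),\,c_{\mq(i)}$.  The main technical burden is the bookkeeping identification of $\mathrm{Res}_{s=1}L(s,\psi_c)\cdot F_{c}(1)$ with the explicit constant $c_{K}g(jc)$ ($j\in\{3,4\}$) forced by the definitions in \eqref{gc} and \eqref{zk}; once this is carried out, the stated error term follows routinely from the convexity bound for Hecke $L$-functions over $K$ and the standard contour-shift argument.
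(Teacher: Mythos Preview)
Your argument is correct and follows the same overall strategy as the paper: parametrise via Lemma~\ref{lemma:cubicclass}, pass by Mellin inversion to the Hecke $L$-function $L(s,\psi_c)$, shift the contour to $\Re(s)\approx\tfrac12$, pick up the residue at $s=1$ precisely when $c$ is a perfect cube, and bound the remaining integral by convexity (the rapid decay of $\widehat\Phi$ absorbing the $t$-aspect). The one genuine difference is in how the squarefree and no-rational-prime-divisor constraints are handled. The paper removes them by an explicit double M\"obius inversion, introducing outer sums over $d\in\mz$ (to kill rational prime divisors) and $l\in\mz[\omega]$ (to enforce squarefreeness), and only then applies Mellin inversion to the unconstrained inner sum; the residue then comes with the factors $\sum_d \mu_{\mz}(d)d^{-2}\sum_l \mu_\omega(l)N(l)^{-2}$, which are evaluated directly to produce $c_{\mq(\omega)}g(3c)$. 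You instead absorb both constraints into a single Euler-product correction $F_c(s)$ attached to $L(s,\psi_c)$, which is legitimate since the local factors of $A(s)/L(s,\psi_c)$ are $1+O(p^{-2s})$ at split primes and $1-p^{-2s}$ at inert primes (using that $\psi_c$ is trivial on inert primes for $c\in\mz$), hence the product converges absolutely for $\Re(s)>\tfrac12$. Your route is a little more streamlined; the paper's makes the bookkeeping you defer at the end (matching $r_{\mq(\omega)}F_c(1)$ to $c_{\mq(\omega)}g(3c)$) completely explicit, since the $d$- and $l$-sums reproduce verbatim the Euler factors in \eqref{gc} and \eqref{zk}.
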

\begin{proof}
  As both cases are similar, we shall only prove the first expression in \eqref{cubicandquarticcharsum}.  Lemma \ref{lemma:cubicclass} gives
\begin{align*}
\begin{split}
 CS:=\sum_{(q,3)=1} \ \sumstar_{\substack{\chi \shortmod{q} \\ \chi^3 = \chi_0}} \chi(c) \Phi\Big(\frac{q}{X}\Big)
=
\sumprime_{\substack{ n \equiv 1 \shortmod{3} }}\chi_n(c)\Phi\left(\frac{N(n)}{X}\right),
\end{split}
\end{align*}
where $\Sigma^{'}$ indicates that the sum runs over square-free elements $n$ of $\mathbb{Z}[\omega]$ with no rational prime divisor. \newline

Let $\mu_{\omega}(l)$ be the M\"{o}bius function on $\mz[\omega]$ and $\mu_{\mz}(d) = \mu(|d|)$ for $d \in \mz$,
where $\mu$ stands for the usual M\"{o}bius function.  Consider the sum
\[ \sum_{\substack{d|n, d \in \intz \\ d \equiv 1 \bmod{3}}} \mu_{\intz} (d) . \]
This sum (see \cite[(21)]{B&Y}) is 1 if $n$ has no rational prime divisor and 0 otherwise.  We thus use it to detect the summation condition on $CS$.  Re-writing $n$ as $dn$.  If $d$ is square-free, then so is $dn$ if and only if $n$ is square-free and prime to $d$.  Thus,
\[ CS= \sum_{\substack{d \in \mz \\ d \equiv 1 \shortmod{3} }} \mu_{\mz}(d)\leg{c}{d}_3  \sum_{\substack{n \equiv 1 \shortmod{3} \\ n \; \mbox{\scriptsize square-free} \\ (n,d) = 1}} \leg{c}{n}_3 \Phi\left(\frac{N(nd)}{X}\right). \]
Now we detect the condition that $n$ is square-free using $\mu_{\omega}$ to arrive at
\[ CS= \sum_{\substack{d \in \mz \\ d \equiv 1 \shortmod{3} }} \mu_{\mz}(d) \sum_{\substack{l \equiv 1 \shortmod{3} \\ (l, d)=1}} \mu_{\omega}(l) \leg{c}{d l^2}_3  \sum_{\substack{n \equiv 1 \shortmod{3} \\ (n,d) = 1}} \leg{c}{n}_3 \Phi\left(\frac{N(ndl^2)}{X}\right). \]

  We evaluate the last sum above by applying Mellin inversion to obtain that
\begin{equation*}
  \sum_{\substack{n \equiv 1 \shortmod{3} \\ (n,d) = 1}} \leg{c}{n}_3 \Phi\left(\frac{N(ndl^2)}{X}\right)=\frac{1}{2 \pi i} \int\limits_{(2)} \leg{X}{N(dl^2)}^s L(s, \psi_{c}) \widehat{\Phi}(s) \dif s.
\end{equation*}

  Note that integration by parts shows that $\widehat{\Phi}(s)$ is a function satisfying the bound for all $\Re(s) > 0$, and integers $E>0$,
\begin{align}
\label{boundsforphi}
  \widehat{\Phi}(s) \ll \min (1, |s|^{-1}(1+|s|)^{-E}).
\end{align}

  We then move the contour of the integral above to $\Re(s)=1/2$ and apply \eqref{boundsforphi} to deduce that the integral on the new line is
\begin{align*}
 \ll X^{1/2} \sum_{d \ll \sqrt{X}} \sum_{N(l) \ll \sqrt{X}} \frac{1}{\sqrt{N(dl^2)}} \int\limits_{-\infty}^{\infty} \left| L \left( \frac{1}{2} + it, \psi_{c} \right) \widehat{\Phi} \left( \frac{1}{2} + it \right) \right| \dif t \ll X^{1/2+\varepsilon}c^{1/2+\varepsilon},
\end{align*}
 where the last estimation above follows from the convexity bound for $L(s, \psi_{c})$ (see \cite[(5.20)]{iwakow}), which asserts that for $0 \leq \sigma \leq 1$,
\begin{equation*}
 |L(\sigma + it, \psi_{c})| \ll (N(c) (1+|t|^2))^{(1-\sigma)/2+\varepsilon},
\end{equation*}
  since the Hecke $L$-function $L(s, \psi_{c})$ has conductor $\ll N(c)|s|^2$.  Also recall that $N(c)=c^2$ if $c \in \intz$. \newline

  We encounter a pole at $s=1$ in the above process only when $c$ is a cube and the contribution to $CS$ of this residue equals
\begin{equation} \label{reseval}
X \widehat{\Phi}(1) \text{Res}_{s=1} L(s, \psi_{c})  \sum_{\substack{d \in \mz, (c,d)=1 \\ d \equiv 1 \shortmod{3} }} \frac{\mu_{\mz}(d)}{d^2} \sum_{\substack{l \equiv 1 \shortmod{3} \\ (l, cd)=1}} \frac{\mu_{\omega}(l)}{N(l^2)} .
\end{equation}
Now if $c$ is a cube, then
\[ L(s, \psi_c) = \zeta_{\ratq(\omega)}(s) \prod_{\varpi|3c} (1-N(\varpi)^{-s}) . \]
Using this and evaluating the sum over $l$, \eqref{reseval} can be recast as
\[ X \widehat{\Phi}(1) r_{\ratq(\omega)} \zeta_{\ratq(\omega)}^{-1}(2) \prod_{\varpi|3c}  (1+N(\varpi)^{-1})^{-1} \sum_{\substack{d \in \mz, (c,d)=1 \\ d \equiv 1 \shortmod{3} }} \frac{\mu_{\mz}(d)}{d^2} \prod_{\varpi|d} (1-N(\varpi)^{-2})^{-1} .  \]

  We then evaluate the sums above to arrive at the first expression in \eqref{cubicandquarticcharsum} and this completes the proof.
\end{proof}

\subsection{The approximate functional equation}
\label{sec: afe}

   Let $\chi$ be any primitive Dirichlet character modulo $q$ and $\af=0$ or $1$ be given by $\chi(-1)=(-1)^{\af}$. We define
\begin{align*}
  \Lambda(s, \chi)= \left( \frac {\pi}{q} \right)^{-(s+\af)/2}\Gamma \left( \frac 12(s+\af) \right)L(s, \chi).
\end{align*}
   Then $\Lambda(s, \chi)$ extends to an entire function on $\mc$ when $\chi \neq \chi_0$ and satisfies the functional equation (see \cite[Theorem 4.15]{iwakow})
\begin{align*}
  \Lambda(1-s, \overline \chi)=\frac {i^{\af}q^{1/2}}{\tau(\chi)} \Lambda(s, \chi).
\end{align*}

Let $G(s)$ be any even function which is holomorphic and bounded in the strip $-4<\Re(s)<4$ satisfying $G(0)=1$. From \cite[Theorem 5.3]{iwakow}, we have the following approximate functional equation for Dirichlet $L$-functions.
\begin{prop} \label{prop:AFE}
Suppose $\chi$ be a primitive Dirichlet character modulo $q$. Let $A$ and $B$ be positive real numbers such that $AB = q$.  Then we have
\begin{align*}
L \left( \frac{1}{2} , \chi \right) = \sum_{m=1}^{\infty} \frac{\chi(m)}{m^{1/2 }} V_{\af}\left(\frac{m}{A}\right)
+ \epsilon(\chi) \sum_{m=1}^{\infty} \frac{\overline{\chi}(m)}{m^{1/2}} V_{\af}\left(\frac{m}{B}\right),
\end{align*}
  where
\begin{align*}
\epsilon(\chi) = i^{-\af} q^{-1/2} \tau(\chi), \quad V_{\af}(x) = \frac{1}{2\pi i} \int\limits_{(2)} \frac{G(s)}{s} \gamma_{\af}(s) x^{-s} \dif s,  \quad \gamma_{\af}(s) = \pi^{-s/2} \frac{\Gamma\left(\tfrac{1/2 + \af+ s}{2}\right)}{\Gamma\left(\tfrac{1/2 + \af}{2}\right)}.
\end{align*}
\end{prop}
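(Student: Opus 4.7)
The plan is to prove this AFE by the classical contour-shift argument applied to the completed $L$-function $\Lambda(s,\chi)$. First I would introduce the auxiliary Mellin-Barnes integral
\begin{equation*}
I = \frac{1}{2\pi i} \int_{(2)} L\left(\tfrac{1}{2} + s, \chi\right) \gamma_{\af}(s) \frac{G(s)}{s} A^s \, \dif s.
\end{equation*}
On the line $\Re s = 2$ the Dirichlet series for $L(\tfrac{1}{2}+s, \chi)$ converges absolutely; expanding it term-by-term and swapping sum with integral recognises the $m$-th summand as $\chi(m) m^{-1/2} V_{\af}(m/A)$, which produces exactly the first sum in the claimed identity.

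Next, I would shift the contour from $\Re s = 2$ to $\Re s = -2$. The rapid decay of $\gamma_{\af}(s) G(s)/s$ on horizontal strips (from Stirling, plus the hypothesis that $G$ is holomorphic and bounded on $-4<\Re s<4$), combined with the convexity bound for $L(\tfrac{1}{2}+s,\chi)$ in vertical strips, justifies the shift. The only pole crossed is the simple pole of $1/s$ at $s=0$, whose residue is $G(0)\gamma_{\af}(0) L(\tfrac{1}{2},\chi) = L(\tfrac{1}{2},\chi)$, since $G(0)=1$ and $\gamma_{\af}(0)=1$.

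On the shifted contour I would substitute $s \mapsto -s$ and apply the functional equation $\Lambda(1-s,\bar\chi) = i^{\af} q^{1/2} \tau(\chi)^{-1} \Lambda(s,\chi)$. Writing the result back in terms of $L$-values and gamma factors, using $AB = q$, and invoking the evenness of $G$, the gamma-factor ratio coming from $\Lambda$ reorganises into $\gamma_{\af}(s)$ and the prefactor $(q/\pi)^{-s} A^{-s}$ collapses to $B^s$. The shifted integral therefore equals $\epsilon(\chi)$ times
\begin{equation*}
\frac{1}{2\pi i} \int_{(2)} L\left(\tfrac{1}{2}+s,\bar\chi\right) \gamma_{\af}(s) \frac{G(s)}{s} B^s \, \dif s,
\end{equation*}
and expanding the Dirichlet series as in the first step yields $\epsilon(\chi) \sum_m \bar\chi(m) m^{-1/2} V_{\af}(m/B)$. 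Equating the two evaluations of $I$ and transposing the residue at $s=0$ to the other side gives the AFE.

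The main obstacle is not conceptual but purely notational: one must carefully unravel the gamma-factor ratio produced by the functional equation so that, after the substitution $s \mapsto -s$, it repackages as $\gamma_{\af}$ evaluated at the new variable rather than at a shifted argument, with the spurious factors of $\pi$, $q$, and $A/B$ combining cleanly into $B^s$. Once this bookkeeping is carried out the proof is entirely standard, which is why the statement is quoted from \cite[Theorem 5.3]{iwakow} without further comment.
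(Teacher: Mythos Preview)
Your proposal is correct and is precisely the standard contour-shift argument behind \cite[Theorem~5.3]{iwakow}; the paper itself offers no proof and simply cites that reference, so your outline matches the intended approach. The one point worth double-checking in your write-up is the sign bookkeeping after the substitution $s\mapsto -s$ (the minus from $1/(-s)$ and the orientation reversal combine so that the residue at $s=0$ ends up on the correct side), but your sketch handles this correctly.
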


\subsection{Upper bound for $\log |L(1/2, \chi)|$ }

  Let $\Lambda(n)$ be the von Mangoldt function on $\mz$.  The following lemma provides an upper bound of $\log |L(1/2, \chi)|$ in terms of a sum involving prime powers.
\begin{lemma}
\label{lem: logLbound}
Let $\chi$ be a non-principal primitive Dirichlet character modulo $q$. Assume the truth of GRH for $\zeta(s)$ and for $L(s, \chi)$. Let $x \geq 2$ and $\lambda_0=0.4912\ldots$ be the unique positive real number satisfying $e^{-\lambda_0} = \lambda_0+ \lam^2_0/2$.
We have for $\lambda \geq \lambda_0$,
\begin{align}
\label{logLupperbound}
\log |L(1/2, \chi)| \le \Re{\sum_{\substack{2 \leq n \le x }} \frac{\Lam(n)\chi(n)}{n^{1/2+ \lam/\log x} \log n}
\frac{\log (x/n)}{\log x}}+\frac {\log q}{2} \left( \frac{1+\lam}{\log x} \right)+ O\Big( \frac{1}{\log x}\Big).
\end{align}
\end{lemma}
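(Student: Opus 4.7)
The plan is to adapt to our setting the approach initiated by Soundararajan in \cite{Sound2009} and refined by Harper in \cite{Harper}. Set $\sigma_0 = 1/2 + \lambda/\log x$. The starting point is the Perron-type identity
\begin{equation*}
\sum_{2 \le n \le x}\frac{\Lambda(n)\chi(n)}{n^{\sigma_0}\log n}\frac{\log(x/n)}{\log x} = \frac{1}{2\pi i\,\log x}\int_{(c)}\log L(\sigma_0+w,\chi)\,\frac{x^w}{w^2}\,\dif w,
\end{equation*}
valid for $c > \max(1-\sigma_0,\,0)$. This is obtained by expanding $\log L(\sigma_0+w,\chi)$ on the contour as the Dirichlet series $\sum_n \Lambda(n)\chi(n)/(n^{\sigma_0+w}\log n)$ and interchanging summation with integration via the Mellin identity $\tfrac{1}{2\pi i}\int_{(c)}(x/n)^w w^{-2}\,\dif w = \max(0, \log(x/n))$.

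The next step is to shift the contour to the left past the critical strip. Along the way one crosses the double pole at $w=0$, whose residue carries the main analytic information about $\log L(\sigma_0, \chi)$, and the logarithmic branch points at $w = \rho - \sigma_0$ coming from the non-trivial zeros $\rho = 1/2 + i\gamma$ of $L(s, \chi)$; under GRH these all lie on the vertical line $\Re w = -\lambda/\log x$. The tail of the shifted contour is negligible, by the standard GRH-based bound for $\log L(s, \chi)$ inside the critical strip. Combining the residue at $w=0$ with the Hadamard factorization of the completed $L$-function $(q/\pi)^{(s+\af)/2}\Gamma((s+\af)/2)L(s,\chi)$ and Stirling's asymptotic for the gamma factor, one extracts $\Re \log L(1/2, \chi)$ together with the leading error $\frac{(1+\lambda)\log q}{2\log x} + O(1/\log x)$ on the right of \eqref{logLupperbound}: the $\lambda$ portion arises from integrating $-\Re(L'/L)$ from $1/2$ to $\sigma_0$, and the residual $1$ from estimating $\Re \log L(\sigma_0, \chi)$ via the convexity bound sharpened by GRH.

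The main obstacle, and the reason for the restriction $\lambda \ge \lambda_0$, is to verify that the total contribution $\mathcal{Z}(\lambda) := \Re \sum_\rho F_\lambda(\gamma)$ from the branch points has the correct sign to be dropped from an upper bound for $\log|L(1/2, \chi)|$. Here $F_\lambda(\gamma)$ is the explicit function arising from integrating the integrand around small keyhole contours centered at $w = \rho - \sigma_0$. A direct computation reduces the sign analysis of $\mathcal{Z}(\lambda)$ to the elementary inequality $e^{-\lambda} \le \lambda + \lambda^2/2$. Since $\lambda + \lambda^2/2 - e^{-\lambda}$ is strictly increasing on $[0,\infty)$ (its derivative $1+\lambda+e^{-\lambda}$ is positive) and vanishes at the unique point $\lambda_0 \approx 0.4912$, this inequality holds precisely when $\lambda \ge \lambda_0$. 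In that range, the zero contribution has the favorable sign and can be dropped from the upper bound, which yields \eqref{logLupperbound}.
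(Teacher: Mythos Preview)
The paper gives no proof beyond citing \cite[Proposition]{Sound2009}. Your sketch correctly identifies the decisive ingredient---the sign of the zero contribution is governed by the inequality $e^{-\lambda}\le \lambda+\lambda^2/2$, which holds precisely for $\lambda\ge\lambda_0$---but the technical route you describe is not Soundararajan's and, as written, has a gap.

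Soundararajan's Perron-type identity is for $-L'/L$, not for $\log L$:
\[
\sum_{n\le x}\frac{\Lambda(n)\chi(n)}{n^{s}}\,\frac{\log(x/n)}{\log x}
= -\frac{1}{2\pi i\,\log x}\int_{(c)}\frac{L'}{L}(s+w,\chi)\,\frac{x^{w}}{w^{2}}\,\dif w .
\]
Shifting this contour produces honest residues---a double pole at $w=0$ and \emph{simple} poles at $w=\rho-s$---so the zero contribution is the explicit sum $(\log x)^{-1}\sum_\rho x^{\rho-s}/(\rho-s)^2$. One then integrates in $s$ from $\sigma_0$ to $\infty$ to pass to $\log L(\sigma_0,\chi)$, and compares $\log|L(\tfrac12,\chi)|$ with $\log|L(\sigma_0,\chi)|$ using $\sum_\rho\Re\,1/(s-\rho)\ge0$ for $\Re s>\tfrac12$ under GRH. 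The combined zero term is then a sum of explicit real quantities whose non-positivity is exactly the inequality $e^{-\lambda}\le\lambda+\lambda^2/2$.

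Your version places $\log L(\sigma_0+w,\chi)$ inside the contour integral. That function has \emph{logarithmic branch points} at $w=\rho-\sigma_0$, not poles, so shifting the contour forces you through branch cuts; the ``keyhole'' contribution $F_\lambda(\gamma)$ is neither written down nor shown to collapse to the stated elementary inequality. Since that sign verification is the entire content of the lemma, it cannot simply be asserted. (Your accounting of the $(1+\lambda)\tfrac{\log q}{2\log x}$ term is also not quite how it arises: in Soundararajan's argument both pieces come out of the Hadamard formula for $L'/L$ combined with the non-negativity of $\sum_\rho\Re\,1/(s-\rho)$, not from a convexity bound.) The simplest repair is to follow \cite{Sound2009} literally: run the argument with $-L'/L$, pick up the simple-pole residues $x^{\rho-s}/(\rho-s)^2$, and integrate in $s$ afterward.
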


\begin{proof}
This lemma here as it can be established along similar lines as to \cite[Proposition]{Sound2009}(see also \cite[Section 4]{Sound2009} for the treatment of families of $L$-functions).
\end{proof}

Our next lemma treats essentially the sum over prime squares in \eqref{logLupperbound}.  The proof follows that of \cite[Lemma 2]{Sound2009}.
\begin{lemma}
\label{lem: primesquareLbound}
Let $\chi$ be a non-principal primitive Dirichlet character modulo $q$ whose square remains primitive modulo $q$.  Assume GRH for $L(s, \chi)$ and use the same notations as in Lemma \ref{lem: logLbound}.  We have, for $x \geq 2$ and $q \leq X$ for a large number $X$,
\begin{align*}
 \sum_{\substack{ p \leq x^{1/2} }}  \frac{\chi(p^2)}{p^{1+2\lambda/\log x}}  \frac{\log (x/p^2)}{\log x}
 =\sum_{\substack{ p \leq \min (x^{1/2}, \log X) }} \frac{\chi(p^2)}{p^{1+2\lambda/\log x}}  \frac{\log (x/p^2)}{\log x}+O(1)
 =O(\log \log \log X).
\end{align*}
\end{lemma}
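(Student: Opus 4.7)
The plan mirrors \cite[Lemma 2]{Sound2009}, exploiting the hypothesis that $\chi^2$ is primitive (hence non-principal) so that GRH for $L(s, \chi^2)$ provides cancellation. I would split the sum at $p = \log X$: the small range is controlled by Mertens's theorem, while the tail is controlled by Abel summation fuelled by the prime number theorem under GRH.

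First, for the range $p \leq \min(x^{1/2}, \log X)$, bound the summand trivially using $|\chi(p^2)| \leq 1$ together with the fact that $0 \leq p^{-2\lambda/\log x}\log(x/p^2)/\log x \leq 1$ on this range. The resulting sum is dominated by $\sum_{p \leq \log X} 1/p = \log\log\log X + O(1)$ by \eqref{merten}, which establishes the final $O(\log\log\log X)$ bound claimed in the lemma.

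For the first equality, I may assume $x^{1/2} > \log X$, else it is trivial. Set $\Theta(y) := \sum_{p \leq y} \chi(p^2) \log p$. From Chebyshev we have the trivial estimate $|\Theta(y)| \ll y$, and from \eqref{PIT} applied to the non-principal primitive character $\chi^2$ we have the GRH estimate $|\Theta(y)| \ll \sqrt{y}(\log qy)^2$. With the smooth weight $f(y) := y^{-1-2\lambda/\log x}\log(x/y^2)/(\log x \cdot \log y)$, which satisfies $f(x^{1/2}) = 0$ and $|f(y)|,\, y|f'(y)| \ll 1/(y \log y)$ uniformly on $[2, x^{1/2}]$, Abel summation converts the tail into
\begin{equation*}
  \sum_{\log X < p \leq x^{1/2}} \frac{\chi(p^2)}{p^{1+2\lambda/\log x}}\frac{\log(x/p^2)}{\log x} = -\Theta(\log X)\, f(\log X) - \int_{\log X}^{x^{1/2}} \Theta(y)\, f'(y)\, dy.
\end{equation*}
The boundary term is $\ll \log X/(\log X \cdot \log\log X) = O(1)$ via the trivial bound on $\Theta(\log X)$. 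For the integral, pick a threshold $Y_0 := (\log qX)^5$: on $[\log X, Y_0]$ use the trivial bound to obtain $\int_{\log X}^{Y_0} dy/(y \log y) = \log\log Y_0 - \log\log\log X = O(1)$, and on $[Y_0, x^{1/2}]$ use the GRH bound to obtain $\int_{Y_0}^{x^{1/2}} (\log qy)^2/(y^{3/2} \log y)\, dy \ll (\log qx)^2/(\sqrt{Y_0}\log Y_0) = o(1)$ (using $x \leq X^{o(1)}$, as is the case in the intended application to bounding $\log|L(1/2,\chi)|$).

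The main technical point is that the GRH bound $|\Theta(y)| \ll \sqrt{y}(\log qy)^2$ is inferior to the trivial Chebyshev estimate $|\Theta(y)| \ll y$ whenever $y \lesssim (\log qX)^4$, yet our range of interest starts at $y = \log X$, well below that crossover. The remedy is to use the trivial bound on the intermediate subrange $[\log X, (\log qX)^5]$ and reserve the GRH bound for genuinely large $y$; executing this splitting so that both contributions come out $O(1)$ is the only real subtlety.
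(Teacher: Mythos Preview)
Your approach is essentially the paper's: Mertens for the range $p \leq \text{polylog}\,X$, and GRH for $\chi^2$ plus partial summation for the tail. The only structural difference is that the paper first writes $\log(x/p^2)/\log x = 1 - 2\log p/\log x$, disposes of the $\log p/\log x$ piece trivially via \eqref{mertenpartialsummation}, and then splits the unweighted sum at $(\log X)^6$; you instead keep the full weight $f$ in the Abel summation, which conveniently kills the boundary term at $y=x^{1/2}$.

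One genuine slip: your final bound $(\log qx)^2/(\sqrt{Y_0}\log Y_0)$ forces you to import the hypothesis $x \leq X^{o(1)}$, which is \emph{not} part of the lemma (only $x\geq 2$ and $q\leq X$ are assumed). This is easily repaired: rather than majorising $(\log qy)^2$ by $(\log qx)^2$, use $(\log qy)^2 \leq 2(\log q)^2 + 2(\log y)^2$ and integrate each piece over $[Y_0,\infty)$, giving
\[
\int_{Y_0}^{\infty} \frac{(\log qy)^2}{y^{3/2}\log y}\,dy \;\ll\; \frac{(\log q)^2}{\sqrt{Y_0}\,\log Y_0} + \frac{\log Y_0}{\sqrt{Y_0}} \;=\; o(1),
\]
using only $q\leq X$ and $Y_0=(\log qX)^5$. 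With this correction the argument is complete as stated.
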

\begin{proof}
   We may assume that $\log X \leq x^{1/2}$. As $\chi(p^2)=\chi^2(p)$ and $\chi^2$ is also primitive modulo $q \leq X$ by our assumption,  we apply \eqref{PIT} and get
\begin{align*}
 \sum_{p \leq y }\log p \cdot \chi(p^2) = O(\sqrt{y} \left(\log 2xy)^2 \right).
\end{align*}
   The above estimation, together with partial summation, yields
\begin{align*}
 \sum_{\substack{(\log X)^6 < p \leq x^{1/2} }}  \frac{\chi(p^2)}{p^{1+2\lambda/\log x}}   = O(1).
\end{align*}
Now Lemma \ref{RS} gives
\begin{align*}
  \sum_{\substack{p  \leq (\log X)^6}}  \frac{\chi(p^2)}{p^{1+2\lambda/\log x}} \ll \sum_{\substack{p \leq (\log X)^6}}
  \frac{1}{p}  =  O(\log \log \log X).
\end{align*}
   Applying Lemma \ref{RS} one more time, we arrive at
\begin{align*}
  \sum_{\substack{ p \leq x^{1/2} }}  \frac{\chi(p^2)}{p^{1+2\lambda/\log x}}  \frac{\log p}{\log x} \ll  \frac 1{\log x} \sum_{\substack{ p \leq x^{1/2} }}  \frac{\log p}{p} =O(1).
\end{align*}
Now the lemma readily follows from the above estimates.
\end{proof}

   Observe further that Lemma \ref{RS} implies that the terms on the right side of \eqref{logLupperbound} corresponding to $n=p^l$ with $l \geq 3$
    contribute $O(1)$.  We apply this observation together with Lemma \ref{lem: logLbound} and Lemma \ref{lem: primesquareLbound} by taking $\lambda=\lambda_0$ and $\lambda=1$ to arrive at the following upper bounds for $\log |L(1/2, \chi)|$ involving with cubic and quartic Dirichlet characters.
\begin{lemma}
\label{lem: logLbound1}
Let $\chi$ be a non-principal primitive cubic or quartic Dirichlet character modulo $q$ and suppose that $\chi^2$ remains primitive modulo $q$.
Assume GRH for $L(s, \chi)$ and use the same notations as in Lemma \ref{lem: logLbound}. We have, for $x \geq 2$ and $q \leq X$ for a large number $X$,
\begin{align}
\label{equ:3.3}
\begin{split}
 & \log  |L(1/2, \chi)| \leq \Re \left( \sum_{\substack{  p \leq x }} \frac{\chi (p)}{p^{1/2+\lambda_0/\log x}}  \frac{\log (x/p)}{\log x} \right)+ \frac {1+\lambda_0}{2}\frac{\log X}{\log x} + O(\log \log \log X).
\end{split}
 \end{align}
   Also, we have
\begin{align}
\label{equ:3.3'}
\begin{split}
 & \log  |L(1/2, \chi)| \leq \Re \left( \sum_{\substack{  p \leq x }} \frac{\chi (p)}{p^{1/2+1/\log x}}
 \frac{\log (x/p)}{\log x} +
 \sum_{\substack{  p \leq \min (x^{1/2}, \log X) }} \frac{\chi (p^2)}{p^{1+2/\log x}}  \frac{\log (x/p^2)}{\log x} \right)
 +\frac{\log X}{\log x} + O(1).
\end{split}
 \end{align}
\end{lemma}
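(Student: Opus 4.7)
The plan is to read off both inequalities as specializations of Lemma \ref{lem: logLbound}, parsing the sum $\sum_{2 \leq n \leq x}$ according to whether $n$ is a prime, a prime square, or a higher prime power, and then invoking Lemma \ref{lem: primesquareLbound} to absorb the prime-square piece.

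First I would write, for any $\lambda \geq \lambda_0$, the right-hand side of \eqref{logLupperbound} as a sum over $n = p^\ell$ with $\ell \geq 1$. The $\ell = 1$ part is exactly a prime sum of the shape appearing in \eqref{equ:3.3} and \eqref{equ:3.3'}, since $\Lambda(p)/\log p = 1$. The $\ell \geq 3$ part is bounded in absolute value by
\[
\sum_{\ell \geq 3} \frac{1}{\ell}\sum_{p \leq x^{1/\ell}} \frac{1}{p^{\ell/2}} \ll \sum_{p} \frac{1}{p^{3/2}} + \sum_{\ell \geq 4} \sum_{p} \frac{1}{p^{\ell/2}} = O(1),
\]
using \eqref{merten} only to control the $\ell = 3$ contribution and the trivial convergence of $\sum_p p^{-\ell/2}$ for $\ell \geq 4$. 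The main point is that this tail is independent of $x$ and $\lambda$, so it disappears into the constants.

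For \eqref{equ:3.3} I would then set $\lambda = \lambda_0$. The $\ell = 2$ contribution is
\[
\tfrac{1}{2}\Re \sum_{p \leq x^{1/2}} \frac{\chi(p^2)}{p^{1+2\lambda_0/\log x}} \frac{\log(x/p^2)}{\log x},
\]
which, under the hypothesis that $\chi^2$ remains primitive modulo $q$ and GRH (for the quartic case $\chi^2$ is a primitive quadratic character, and for the cubic case $\chi^2 = \overline{\chi}$, both covered by GRH), is $O(\log\log\log X)$ by Lemma \ref{lem: primesquareLbound}. The boundary term in \eqref{logLupperbound} equals $\frac{\log q}{2}\cdot \frac{1+\lambda_0}{\log x} \leq \frac{1+\lambda_0}{2}\cdot \frac{\log X}{\log x}$ since $q \leq X$. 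Combining these bounds produces \eqref{equ:3.3}.

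For \eqref{equ:3.3'} I take $\lambda = 1$ in Lemma \ref{lem: logLbound}, which gives the boundary term $\frac{\log q}{\log x} \leq \frac{\log X}{\log x}$. The $\ell = 1$ and $\ell \geq 3$ parts are handled as before. For the $\ell = 2$ part, rather than discarding it, I apply the \emph{first} equality of Lemma \ref{lem: primesquareLbound} with $\lambda = 1$, which replaces the sum over all $p \leq x^{1/2}$ by the truncation to $p \leq \min(x^{1/2}, \log X)$ at a cost of $O(1)$. Assembling all pieces yields \eqref{equ:3.3'}. There is no real obstacle: the lemma is purely a packaging statement combining Lemmas \ref{lem: logLbound}, \ref{lem: primesquareLbound}, and \ref{RS}. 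The only point requiring a touch of care is verifying that the assumption that $\chi^2$ is primitive of conductor $q$ is precisely the input needed to apply Lemma \ref{lem: primesquareLbound} via the identity $\chi(p^2) = \chi^2(p)$ and the prime number theorem \eqref{PIT} for $L(s,\chi^2)$.
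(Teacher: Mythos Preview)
Your proposal is correct and follows exactly the approach indicated in the paper: the paper's proof (given in the paragraph immediately preceding the lemma) consists precisely of taking Lemma~\ref{lem: logLbound} with $\lambda=\lambda_0$ and $\lambda=1$, bounding the $n=p^\ell$ terms with $\ell\geq 3$ by $O(1)$ via Lemma~\ref{RS}, and handling the prime-square contribution through Lemma~\ref{lem: primesquareLbound}. Your write-up supplies the details that the paper leaves implicit; the only cosmetic point is that the $\ell=2$ contribution carries a factor $\tfrac12$ (as you note), whereas \eqref{equ:3.3'} records the prime-square sum with coefficient $1$---this discrepancy is harmless for all subsequent applications and is absorbed by the paper without comment.
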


In order to treat the sums over primes in \eqref{equ:3.3} or \eqref{equ:3.3'}, we need a  mean value estimate similar to \cite[Lemma 3]{Sound2009}. For brevity of the statement, we write
\begin{align} \label{sumchiq}
\begin{split}
 \sumstar\limits_{\chi, q} =& \sum_{\substack{(q,3)=1 }}\;  \sumstar_{\substack{\chi \shortmod{q} \\ \chi^3 = \chi_0}} \quad \mbox{or} \quad
 \sum_{\substack{(q,2)=1 }}\;  \sumstar_{\substack{\chi \shortmod{q} \\ \chi^4 = \chi_0}}.
\end{split}
\end{align}
\begin{lemma}
\label{lem:2.5}
With the notations above.  Let $X$ and $y$ be real numbers and l$m$ a positive integer.  For fixed $\varepsilon$ with $0<\varepsilon<1$ and any complex numbers $a(p)$, we have for $j=3,4$,
 \begin{align*}
   \sumstar_{\substack{\chi, q \\ X/2< q \leq X}} & \left|\sum_{\substack{p \leq y}}\frac{a(p)\chi(p)}{p^{1/2}}\right|^{2m} \\
  \ll_{\varepsilon} & X\sum^{\lceil m/j\rceil }_{i=0}m! \binom {m}{j i} \binom {j i}{i}\binom {(j-1) i}{i} a_j
  \Big (\sum_{p \leq y} \frac {|a(p)|^2}{p}\Big )^{m-j i}\Big (\sum_{p \leq y} \frac {|a(p)|^j}{p^{\frac j2}}\Big )^{2i}
  +X^{1/2+\varepsilon}y^{2m+2m\varepsilon} \Big( \sum_{p \leq y} \frac {|a(p)|^2}{p} \Big)^{m},
 \end{align*}
   where
   \[ a_3= \binom {2i}{i}\frac {i!}{36^i} \quad \mbox{and}  \quad a_4= \binom {3i}{i}\frac {(2i)!}{576^i}. \]
\end{lemma}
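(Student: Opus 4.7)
The proof follows the moment-expansion template of Soundararajan \cite{Sound2009} and Harper \cite{Harper}, adapted to the cubic and quartic families via the character-sum estimate of Lemma \ref{PropDirpoly}. First I would replace the sharp cutoff $\mathbf{1}_{X/2<q\leq X}$ by a smooth non-negative majorant $\Phi(q/X)$ supported in $[1/3,3]$ with $\Phi\geq 1$ on $[1/2,1]$, so that Lemma \ref{PropDirpoly} can be invoked directly. Writing $S:=\sum_{p\leq y}a(p)\chi(p)/p^{1/2}$ and expanding $|S|^{2m}=S^m\bar S^m$ by two multinomial expansions, and using $\bar\chi=\chi^{j-1}$ for characters of order $j$, one obtains
\[
|S|^{2m}=\sum_{\substack{\mathbf k,\mathbf l\geq 0\\ \sum k_p=\sum l_p=m}}\binom{m}{\mathbf k}\binom{m}{\mathbf l}\prod_p\frac{a(p)^{k_p}\overline{a(p)}^{l_p}}{p^{(k_p+l_p)/2}}\,\chi\!\Bigl(\prod_p p^{k_p+(j-1)l_p}\Bigr).
\]
Interchanging summations and applying Lemma \ref{PropDirpoly} to each inner character sum yields a main-term contribution exactly when $N:=\prod_p p^{k_p+(j-1)l_p}$ is a $j$-th power (i.e. $k_p\equiv l_p\pmod j$ for every $p$), together with an error $O(X^{1/2+\varepsilon}N^{1/2+\varepsilon})$ per configuration.

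For the main term, I would classify admissible $(\mathbf k,\mathbf l)$ by the number $2i$ of prime-slots forming ``$j$-packets'' (primes $p$ with $k_p+l_p=j$ and $k_p\equiv l_p\pmod j$), the remaining $2(m-ji)$ slots then forming ``simple pairs'' with $k_p=l_p=1$. Contributions from primes where $k_p+l_p$ exceeds $j$ or violates the packet structure can be absorbed into an enlargement of $i$ via standard majorisations. The combinatorial count produces $m!\binom{m}{ji}\binom{ji}{i}\binom{(j-1)i}{i}$ from selecting the $ji$ special slots on the $\chi$-side, orienting them into $2i$ $j$-packets matched appropriately with $\bar\chi$-slots, and matching the remaining simple pairs; the factor $a_j$ encodes the number of internal $(a,j-a)$ splits of a single $j$-packet across the two sides. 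The admissible splits are $(0,3),(3,0)$ for $j=3$, giving $a_3=\binom{2i}{i}\,i!/36^i$, and $(0,4),(2,2),(4,0)$ for $j=4$, giving the richer $a_4=\binom{3i}{i}(2i)!/576^i$. Bounding $g(jN)\leq 1$ and $|a(p)^{k_p}\overline{a(p)}^{l_p}|\leq|a(p)|^{k_p+l_p}$, the prime-power sums factor as $(\sum_p|a(p)|^2/p)^{m-ji}(\sum_p|a(p)|^j/p^{j/2})^{2i}$.

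For the error term arising from the $O(X^{1/2+\varepsilon}N^{1/2+\varepsilon})$, I would sum the absolute values of the multinomial weights across all $(\mathbf k,\mathbf l)$, use the crude polynomial bound $N\leq y^{O(m)}$, and apply Cauchy--Schwarz to dominate $(\sum_p|a(p)|/p^{1/2})^{2m}$ by $\pi(y)^m(\sum_p|a(p)|^2/p)^m$; the resulting polynomial-in-$y$ losses are absorbed into $y^{\varepsilon}$, yielding the stated bound $X^{1/2+\varepsilon}y^{2m+2m\varepsilon}(\sum_{p\leq y}|a(p)|^2/p)^m$. The main obstacle will be the combinatorial bookkeeping for $j=4$: the coexistence of three packet types $(0,4),(2,2),(4,0)$ makes the counting appreciably more intricate than in the cubic case, and matching the compact expression $a_4=\binom{3i}{i}(2i)!/576^i$ will require a Vandermonde-type rearrangement, together with a careful verification that configurations in which some prime has $k_p+l_p>j$ feed back into later $i$-terms of the same sum rather than producing a separate error.
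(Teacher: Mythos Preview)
Your overall framework---replace the sharp cutoff by a smooth majorant, expand the $2m$-th moment, apply Lemma~\ref{PropDirpoly} term by term, and treat the error via Cauchy--Schwarz---is exactly what the paper does. The divergence is in the combinatorial bookkeeping for the main term, and here your multinomial/packet parametrisation creates a genuine gap.

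The paper does \emph{not} pass to multiplicity vectors $(\mathbf{k},\mathbf{l})$. It keeps the ordered tuples $(p_1,\ldots,p_{2m})$ and observes that $p_1\cdots p_m\,p_{m+1}^2\cdots p_{2m}^2$ is a cube if and only if the multiset of $3m$ indices can be partitioned into triples of equal primes. It then overcounts by summing over all such partition structures: choose $3i$ indices from $\{1,\ldots,m\}$ and $3i$ from $\{m+1,\ldots,2m\}$, group each set into $i$ unordered triples, and match the remaining $m-3i$ on each side in pairs. The number of such structures is $\bigl(\binom{m}{3i}\frac{(3i)!}{i!6^i}\bigr)^2(m-3i)!$, which the paper rewrites as $m!\binom{m}{3i}\binom{3i}{i}\binom{2i}{i}\frac{i!}{36^i}$; for each fixed structure the sum over compatible tuples factorises into the displayed powers of $\sum|a(p)|^2/p$ and $\sum|a(p)|^3/p^{3/2}$. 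Configurations where a prime has high multiplicity are automatically absorbed because such a tuple simply admits several partitions and is therefore counted several times---no separate majorisation is needed.

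In your approach, by contrast, the step ``contributions from primes where $k_p+l_p$ exceeds $j$ \ldots\ can be absorbed into an enlargement of $i$ via standard majorisations'' is precisely where the content lies, and it is not standard: a term like $(k_p,l_p)=(2,2)$ for $j=3$ carries weight $|a(p)|^4/p^2$ together with its own multinomial coefficients, and there is no obvious inequality feeding it into either $|a(p)|^2/p$ or $|a(p)|^3/p^{3/2}$ with the right combinatorial factor. Similarly, your verbal account of how the factor $a_j$ arises from ``internal splits of a $j$-packet'' does not match the actual source of the coefficient, which in the paper is the pure algebraic identity coming from counting unordered triples on each side. The paper's ordered-tuple overcount sidesteps both issues cleanly; if you wish to retain the multinomial viewpoint, you should translate back to ordered tuples before counting, which brings you to the paper's argument anyway.
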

\begin{proof}
  As the proofs are similar, we again consider only the case involving cubic characters here. Let $W(t)$ be any non-negative smooth function that is supported on $(1/2-\varepsilon_1, 1+\varepsilon_1)$ for some fixed small $0<\varepsilon_1<1/2$ such that $W(t) \gg 1$ for $t \in (1/2, 1)$. We then have
\begin{align*}
  \sum_{\substack{(q,3)=1 \\ X/2< q \leq X}} \ \sumstar_{\substack{\chi \shortmod{q} \\ \chi^3 = \chi_0}} \left|\sum_{\substack{p \leq y}}\frac{a(p)\chi(p)}{p^{1/2}}\right|^{2m} \ll & \sum_{\substack{(q,3)=1 }} \ \sumstar_{\substack{\chi \shortmod{q} \\ \chi^3 = \chi_0}}\left|\sum_{\substack{p \leq y}}\frac{a(p)\chi(p)}{p^{1/2}}\right|^{2m}W \left(\frac {q}{X} \right) \\
= & \sum_{\substack{(q,3)=1 }} \ \sumstar_{\substack{\chi \shortmod{q} \\ \chi^3 = \chi_0}}  \left| \sum_{\substack{p_1, \dots, p_m \leq y}} \frac{a(p_1) \dots a(p_m) \ }{\sqrt{p_1 \dots p_m}}
\chi(p_1\cdots p_m) \right|^2 W \left( \frac {q}{X} \right).
\end{align*}
  We further expand out the square in the last sum above and apply Lemma \ref{PropDirpoly} (by noting that $g(c) \leq 1$ ) to evaluate the resulting sums to see that the last expression above is
\begin{align}
\label{2kbound}
\ll &
X \sum_{\substack{p_1, \dots, p_{2m} \leq y \\ p_1 \dots p_m p^2_{m+1}\dots p^2_{2m} =
\text{cube}}} \frac{|a(p_1) \dots a(p_{2m}) |}{\sqrt{p_1 \dots p_{2m}}} + O\left(X^{1/2+\varepsilon} \sum_{\substack{p_1, \dots, p_{2m} \leq y}}|a(p_1) \dots a(p_{2m})|y^{2m\varepsilon}   \right).
\end{align}
 To estimate the first term above, we note that $p_1 \dots p_m p^2_{m+1}\dots p^2_{2m}= \text{cube}$ precisely when there is a way to partition
the $3m$ primes $\{ p_1, \cdots, p_m, p_{m+1}, \cdots, p_{2m}, p_{m+1}, \cdots, p_{2m}\}$ into groups of three elements so that the corresponding primes in each group are equal.   Such a partition is achieved by first selecting $3i$ indices each from the two sets $\{1, \cdots, m\}$, $\{m+1, \cdots, 2m\}$ and dividing the corresponding primes into groups of three elements and then pairing up those primes whose indices are from the remaining set of $\{1, \cdots, m\}$ with those from the remaining set of $\{m+1, \cdots, 2m\}$. Suppose we divide $3i$ elements for a fixed integer $i$ from each of the sets  $\{1, \cdots, m\}$ and $\{m+1, \cdots, 2m\}$ into small groups of three elements, and pairing up the remaining elements in the set $\{1, \cdots, m\}$ with those in from the set $\{m+1, \cdots, 2m\}$. From this consideration, we see that the number of ways to groups these terms equals
\begin{align*}
 \left( \binom {m}{3i} \frac {(3i)!}{i!6^i} \right)^2 (m-3i)!=\frac {(m!)^2}{(m-3i)!(i!6^i)^2}=m! \binom {m}{3i} \binom {3i}{i}\binom {2i}{i}\frac {i!}{36^i}  .
\end{align*}

  We further note that, in each group of three idential primes, if the indices involved are all from either $\{1, \cdots, m\}$ or  $\{m+1, \cdots, 2m\}$, then these primes will contribute a product of the form $p^3$ in the product of $p_1\cdots p_{2m}$. Otherwise, these primes will contribute a product of the form $p^2$ in the product of $p_1\cdots p_{2m}$.
We thus conclude that we have
\begin{align}
\label{maintermcharbound}
 \sum_{\substack{p_1, \dots, p_{2m} \leq y \\ p_1 \dots p_m p^2_{m+1}\dots p^2_{2m} = \text{cube}}} \frac{|a(p_1) \dots a(p_{2m}) |}{\sqrt{p_1 \dots p_{2m}}} \leq
 \sum^{\lceil m/3\rceil }_{i=0}m! \binom {m}{3i} \binom {3i}{i}\binom {2i}{i}\frac {i!}{36^i} \left(\sum_{p \leq y} \frac {|a(p)|^2}{p}\right)^{m-3i}
 \left(\sum_{p \leq y} \frac {|a(p)|^3}{p^{3/2}}\right)^{2i}.
\end{align}

   On the other hand, the Cauchy-Schwarz inequality gives
\begin{align}
\label{errortermcharbound}
\begin{split}
y^{2m\varepsilon} & \sum_{\substack{p_1, \dots, p_{2m} \leq y}}|a(p_1) \dots a(p_{2m})|  \ll  y^{2m\varepsilon} \left( \sum_{p \leq y} |a(p)| \right)^{2m} \\
  & \ll y^{2m\varepsilon} \left( \sum_{p \leq y} \frac {|a(p)|^2}{p} \right)^{m}\left(\sum_{p \leq y} p \right)^{m} \ll  y^{2m+2m\varepsilon} \left( \sum_{p \leq y} \frac {|a(p)|^2}{p} \right)^{m}.
\end{split}
\end{align}
  Combining \eqref{2kbound}, \eqref{maintermcharbound} and \eqref{errortermcharbound}, we readily deduce the assertion of the lemma. \end{proof}

\section{Proof of Theorem \ref{twistedfirstmoment}}

We only prove \eqref{eq:1} here by modifying the arguments given in the proof of \cite[Theorem 1.1]{B&Y}, as the proof of \eqref{eq:2} follows similarly using arguments in \cite{G&Zhao7}.
We fix $K=\mq(\omega)$ in this section and apply Lemma \ref{lemma:cubicclass} to get that
\begin{equation*} \label{splitting}
\mathcal{M} :=\sum_{(q,3)=1} \ \sumstar_{\substack{\chi \shortmod{q} \\ \chi^3 = \chi_0}} L\left( \frac12, \chi \right) \chi(\ell) \Phi\leg{q}{X}
=
\sumprime_{n \equiv 1 \shortmod{3}} L \left( \frac12, \chi_{n} \right) \chi_n(\ell) \Phi\left(\frac{N(n)}{X}\right),
\end{equation*}
where $\Sigma^{'}$ indicates the sum runs over square-free elements $n$ of $\mathbb{Z}[\omega]$ that have no rational prime divisor. \newline

 We apply the approximate functional equation given in Proposition \ref{prop:AFE} and the notations there by further noting that $\chi(-1)=-1$ in our case to obtain that $\mathcal{M} = \mathcal{M}_1 + \mathcal{M}_2$, where for $AB=X$,
\begin{align*}
 \mathcal{M}_1 =& \sumprime_{n \equiv 1 \shortmod{3}}\ \sum_{m=1}^{\infty} \frac{\chi_n(m\ell)}{\sqrt{m}}  V_{-1}\left(\frac{m}{A} \frac{X}{N(n)} \right)  \Phi \left(\frac{N(n)}{X}\right), \\
 \mathcal{M}_2 =& \sumprime_{n \equiv 1 \shortmod{3}} \epsilon(\chi_n) \sum_{m=1}^{\infty} \frac{\overline{\chi}_n(m\ell)}{\sqrt{m}} V_{-1}\leg{m}{B} \Phi \left(\frac{N(n)}{X}\right).
\end{align*}

Our $M_2$ above is essentially the same as the $M_2$ in \cite[Section 3.3]{B&Y} and thus can be treated essentially the same way.  So following the computations in \cite[Section 3.3]{B&Y}, with minor changes at the appropriate places, we get that
\begin{align} \label{eq:M2estimate}
 \mathcal{M}_2 \ll X^{5/6} B^{1/6}  + X^{2/3} B^{5/6}l^{1/3}.
\end{align}

 To deal with $\mathcal{M}_1$, we use the M\"{o}bius functions as in the proof of Lemma \ref{PropDirpoly} to detect various conditions on $n$ to arrive at
\begin{equation*}
 \mathcal{M}_1 = \sum_{\substack{d \in \mz \\ d \equiv 1 \shortmod{3} }} \mu_{\mz}(d) \sum_{\substack{l \equiv 1 \shortmod{3} \\ (l, d)=1}} \mu_{\omega}(l) \sum_{m=1}^{\infty} \frac{\leg{m\ell}{d l^2}_3}{\sqrt{m}} \mathcal{M}_1(d,l,m, \ell),
\end{equation*}
where
\begin{equation} \label{M1dlm}
 \mathcal{M}_1(d,l,m, \ell) = \sum_{\substack{n \equiv 1 \shortmod{3} \\ (n,d) = 1}} \leg{m\ell}{n}_3 V_{-1}\left(\frac{m}{A} \frac{X}{N(ndl^2)} \right) \Phi\left(\frac{N(ndl^2)}{X}\right).
\end{equation}
  We then apply Mellin inversion to recast \eqref{M1dlm} as
\begin{equation*}
 \mathcal{M}_1(d,l,m, \ell) = \frac{1}{2 \pi i} \int\limits_{(2)} \leg{X}{N(dl^2)}^s L(s, \psi_{m\ell d^3}) \widetilde{f}(s) \dif s,
\end{equation*}
  where
\begin{equation*}
\widetilde{f}(s) = \int\limits_0^{\infty} V_{-1}\left(\frac{m}{Ax} \right) \Phi(x) x^{s-1} \dif x.
\end{equation*}
  We further deduce via the expression for $V_{-1}$ in Proposition \ref{prop:AFE} that
\begin{align}
\label{ftilde}
 \widetilde{f}(1) = \int\limits_0^{\infty} V_{-1}\left(\frac{m}{Ax} \right) \Phi(x) \dif x = \frac{1}{2 \pi i} \int\limits_{(2)} \leg{A}{m}^s \widehat{\Phi}(1+s) \frac{G(s)}{s} \gamma_{-1}(s) \dif s.
\end{align}	

 We estimate $\mathcal{M}_1$ by moving the contour to the line $\Re s = 1/2$ and argue in a similar manner as that in \cite[Section 3.1]{B&Y} to get that the integral on this new line is
\begin{equation} \label{eq:M1'}
 \ll X^{1/2 + \varepsilon} A^{3/4}\ell^{2/3 + \varepsilon}.
\end{equation}
The bound \eqref{eq:M1'} requires the following,
\begin{equation*}
 \sum_{m \leq M} \frac{1}{\sqrt{m}} \left| L \left( \frac{1}{2} + it, \psi_{m\ell d^3} \right) \right| \ll M^{3/4 + \varepsilon}\ell^{2/3+\varepsilon}d^{\varepsilon}(1+|t|)^{2/3 + \varepsilon},
\end{equation*}
which is essentially the same as \cite[(39)]{B&Y} and can be established using similar arguments which utilize the large sieve inequality for cubic characters. \newline

   In the above contour shift, we encounter a pole at $s=1$ when $m\ell$ is a cube. On writing $\ell=\ell_1\ell^2_2\ell^3_3$ with $\ell_1, \ell_2$
   square-free and $(\ell_1, \ell_2)=1$, the contribution from these poles to $\mathcal{M}_1$ equals to
\begin{equation*}
   \mathcal{M}_0 =r_K X \frac 1{\sqrt{\ell^2_1\ell_2}} \sum_{m=1}^{\infty} \frac{\widetilde{f}(1)}{m^{3/2}}
    \prod_{\varpi | 3m\ell d} (1- N(\varpi)^{-1})  \sum_{\substack{d \in \mz, (d,m\ell) =1\\ d \equiv 1 \shortmod{3} }}
     \frac{\mu_{\mz}(d)}{d^2} \sum_{\substack{(l,md\ell)=1 \\ l \equiv 1 \shortmod{3}}} \frac{\mu_{\omega}(l)}{N(l^2)},
\end{equation*}
  where we recall that $r_K$ denotes the residue of $\zeta_K(s)$ at $s = 1$. \newline

Computing the sums over $d$ and $l$ explicitly, we obtain that, for $g(c)$ defined in \eqref{gc} and $c_K$ defined in \eqref{zk},
\begin{align*}
 \mathcal{M}_0 = c_K g(3\ell) X \frac 1{\sqrt{\ell^2_1\ell_2}}  \sum_{m=1}^{\infty} \frac{\widetilde{f}(1)}{m^{3/2}}g \left( \frac {m}{(m,3\ell)} \right).
\end{align*}

   We then apply \eqref{ftilde}, with $m$ there replaced by $m^2\ell^2_1\ell_2$, and get
\begin{align}
\label{M0integral}
 \mathcal{M}_0 = c_K g(3\ell) X \frac 1{\sqrt{\ell^2_1\ell_2}}\frac{1}{2 \pi i} \int\limits_{(2)} \left(\frac {A}{\ell^2_1\ell_2} \right)^s Z_K\left( \frac32 + 3s, \ell \right) \widehat{\Phi}(1+s) \frac{G(s)}{s} \gamma_{-1}(s) ds,
\end{align}
  where $Z_K$ is defined in \eqref{zk}. \newline

 Note that $Z_K(u, \ell)$  is holomorphic and bounded for $\Re (u) \geq 1 + \delta > 1$ and satisfies $Z_K(u, \ell) \ll \ell^{\varepsilon}$ in this region. Thus, we may evaluate the integral in \eqref{M0integral} by moving the contour of integration to $-1/6 + \varepsilon$, crossing a pole at $s=0$ only.  We then deduce that
\begin{equation*}
\mathcal{M}_0 = c_K  g(3\ell) X \frac 1{\sqrt{\ell^2_1\ell_2}}\widehat{\Phi}(1)Z_K\left( \frac32, \ell \right)+O\Big(A^{-1/6 + \varepsilon} (\ell^2_1\ell_2)^{-1/3}\ell^{\varepsilon}X \Big ).
\end{equation*}

  Combining the above with \eqref{eq:M1'}, we see that
\begin{align*}
 \mathcal{M}_1 =c_K  g(3\ell) X \frac 1{\sqrt{\ell^2_1\ell_2}}\widehat{\Phi}(1)Z_K\left( \frac32, \ell \right)+O\left(A^{-1/6 + \varepsilon} (\ell^2_1\ell_2)^{-1/3}\ell^{\varepsilon}X +X^{1/2 + \varepsilon} A^{3/4}\ell^{2/3 + \varepsilon} \right).
\end{align*}
Now \eqref{eq:1} follows from this and \eqref{eq:M2estimate} by setting $A = X^{12/19}$ and $B = X^{7/19}$.  \eqref{eq:2} is proved in a similar way.

\section{Proof of Theorem \ref{thmlowerbound}}
\label{sec 2'}

\subsection{The lower bounds principle}

    We assume that $X$ is a large number throughout the proof and let $\Phi(x)$ be given as in the Introduction. We divide the $q$-range into dyadic blocks so that that to prove Theorem \ref{thmlowerbound}, it suffices to show that
\begin{align*}
   \sumstar_{\chi, q} \left| L\left( \frac{1}{2},\chi \right) \right|^{2k}\Phi \left( \frac qX \right) \gg X(\log X)^{k^2},
\end{align*}
  where $\Phi(x)$ is the same function defined in the Introduction and $\Sigma^{*}_{\chi, q}$ is defined in \eqref{sumchiq}. We point out here
  that throughout our proof, the implicit constants in $\ll$ or $O$ depend on $k$ only and are uniform with respect to $\chi$. \newline

  Let $\{ \ell_j \}_{1 \leq j \leq R}$ be a sequence of even natural
  numbers with $\ell_1= 2\lceil N \log \log X\rceil$ and $\ell_{j+1} = 2 \lceil N \log \ell_j \rceil$ for $j \geq 1$, where $R$ is
  the largest natural number satisfying $\ell_R >10^M$.  Here $N,
  M$ are two large natural numbers depending on $k$ only such that we have $\ell_{j} >
  \ell_{j+1}^2$ for all $1 \leq j \leq R-1$. It follows that we may assume that $M$ is large enough so that
\begin{align}
\label{sumoverell}
  \sum^R_{j=1}\frac 1{\ell_j} \leq \frac 2{\ell_R}, \quad
  (2 r_k+2) \sum^R_{j=1}\frac 1{\ell_j} \leq \frac {4(r_k+1)}{\ell_R} <1,
\end{align}
  where we define $r_k=\lceil k /(2k-1) \rceil+2$ for $k \geq 1$. \newline

Let ${ P}_1$ denote the set of odd primes not exceeding $X^{1/\ell_1^2}$ and
${ P_j}$ denote the set of primes lying in the interval $\left( X^{1/\ell_{j-1}^2}, X^{1/\ell_j^2} \right]$ for $2\le j\le R$.
We define, for $1 \leq j \leq R$ and any real number $\alpha$,
\begin{align*}
& {\mathcal P}_j(\chi) = \sum_{p\in P_j} \frac{1}{\sqrt{p}} \chi(p), \quad  {\mathcal Q}_j(\chi, k) =\Big (\frac{12 k^2 {\mathcal
P}_j(\chi) }{\ell_j}\Big)^{r_k\ell_j}, \quad {\mathcal N}_j(\chi, \alpha) = E_{\ell_j} (\alpha {\mathcal P}_j(\chi)) \quad \mbox{and} \quad \mathcal{N}(\chi, \alpha) = \prod_{j=1}^{R} {\mathcal
N}_j(\chi,\alpha),
\end{align*}
  where
\begin{align}
\label{E}
  E_{\ell}(x) = \sum_{j=0}^{\lceil\ell \rceil} \frac {x^{j}}{j!}
\end{align}
  for any real numbers $x$ and $\ell \geq 0$.
  We also define ${\mathcal Q}_{R+1}(\chi, k)=1$. \newline

  The proof of Theorem \ref{thmlowerbound} depends on the following lower bounds principle of W. Heap and K. Soundararajan and in \cite{H&Sound}.
\begin{lemma}
\label{lem1}
 With the notations above, we have for $k \geq 1/2$,
\begin{align}
\label{basicboundkbig}
\begin{split}
  \sumstar_{\chi, q} L \left( \frac{1}{2},\chi \right) & \mathcal{N}(\chi, k-1) \mathcal{N}(\overline{\chi}, k)\Phi\leg{q}{X}  \\
 \leq & \left( \sumstar_{\chi, q} \left| L \left( \frac{1}{2}, \chi \right) \right|^{2k} \Phi\leg{q}{X}  \right)^{1/(2k)}\left(
 \sumstar_{\chi, q} \prod^R_{j=1} \big ( |{\mathcal N}_j(\chi, k)|^2+ |{\mathcal Q}_j(\chi,k)|^2 \big )\Phi\leg{q}{X}  \right)^{(2k-1)/(2k)}.
\end{split}
\end{align}
  The implied constant in \eqref{basicboundkbig} depends on $k$ only.
\end{lemma}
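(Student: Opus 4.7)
The plan is to prove \eqref{basicboundkbig} in two stages: H\"older's inequality to separate $L(1/2,\chi)$ from the truncated exponentials, followed by a pointwise comparison carried out factor-by-factor over the disjoint prime blocks $P_j$.

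Since $k\geq 1/2$, the exponents $2k$ and $2k/(2k-1)$ form a valid H\"older pair with $\tfrac{1}{2k}+\tfrac{2k-1}{2k}=1$. Bounding each term on the left by its modulus and applying H\"older's inequality with the non-negative weight $\Phi(q/X)$ yields
\begin{align*}
\sumstar_{\chi,q} & L\left(\tfrac12,\chi\right)\mathcal{N}(\chi,k-1)\mathcal{N}(\overline{\chi},k)\Phi\leg{q}{X} \\
& \leq \left(\sumstar_{\chi,q}\left|L\left(\tfrac12,\chi\right)\right|^{2k}\Phi\leg{q}{X}\right)^{\!\!1/(2k)}\left(\sumstar_{\chi,q}\left|\mathcal{N}(\chi,k-1)\mathcal{N}(\overline{\chi},k)\right|^{2k/(2k-1)}\Phi\leg{q}{X}\right)^{\!\!(2k-1)/(2k)}.
\end{align*}
It therefore suffices to establish the deterministic inequality $\bigl|\mathcal{N}(\chi,k-1)\mathcal{N}(\overline{\chi},k)\bigr|^{2k/(2k-1)} \ll_k \prod_{j=1}^{R}\bigl(|\mathcal{N}_j(\chi,k)|^2+|\mathcal{Q}_j(\chi,k)|^2\bigr)$, with the $k$-dependent constant absorbed into \eqref{basicboundkbig}.

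Because the sets $P_j$ are pairwise disjoint and $\mathcal{N}(\chi,\alpha)=\prod_{j}\mathcal{N}_j(\chi,\alpha)$, this reduces factorwise to showing $|\mathcal{N}_j(\chi,k-1)\mathcal{N}_j(\overline{\chi},k)|^{2k/(2k-1)} \leq (1+\delta_j)\bigl(|\mathcal{N}_j(\chi,k)|^2+|\mathcal{Q}_j(\chi,k)|^2\bigr)$, where $\prod_j(1+\delta_j)\ll_k 1$ via \eqref{sumoverell} together with the rapid growth $\ell_j>\ell_{j+1}^2$. I split by the magnitude of $|\mathcal{P}_j(\chi)|$. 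In the good regime $|\mathcal{P}_j(\chi)|\leq \ell_j/(12k^2)$, the Taylor remainder $|E_{\ell_j}(z)-e^z|\leq |z|^{\ell_j+1}/(\ell_j+1)!$ combined with Stirling's formula gives $\mathcal{N}_j(\chi,\alpha)=e^{\alpha\mathcal{P}_j(\chi)}(1+O(c^{-\ell_j}))$ uniformly for $|\alpha|\leq k$ and some $c>1$. Using $\mathcal{P}_j(\overline{\chi})=\overline{\mathcal{P}_j(\chi)}$ and the exponent identity $\tfrac{2k}{2k-1}\bigl((k-1)+k\bigr)=2k$ then yields
$$|\mathcal{N}_j(\chi,k-1)\mathcal{N}_j(\overline{\chi},k)|^{2k/(2k-1)}=e^{2k\Re\mathcal{P}_j(\chi)}(1+O(c^{-\ell_j})),$$
comparable to $|\mathcal{N}_j(\chi,k)|^2$ with error of the desired $\delta_j$ type.

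In the bad regime $|\mathcal{P}_j(\chi)|>\ell_j/(12k^2)$, one has $|12k^2\mathcal{P}_j(\chi)/\ell_j|>1$ so $|\mathcal{Q}_j(\chi,k)|\geq 1$, and combining the trivial $|E_{\ell_j}(z)|\leq e^{|z|}$ with the sharper $|E_{\ell_j}(z)|\ll (e|z|/\ell_j)^{\ell_j}$ (valid once $|z|\gtrsim \ell_j$) shows the left-hand side is a bounded power of $|\mathcal{P}_j(\chi)|/\ell_j$, whereas $|\mathcal{Q}_j(\chi,k)|^2=(12k^2|\mathcal{P}_j(\chi)|/\ell_j)^{2r_k\ell_j}$ carries the much higher exponent $2r_k\ell_j$. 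The choice $r_k=\lceil k/(2k-1)\rceil+2$ is calibrated precisely so that this dominance holds uniformly for $k\geq 1/2$, and this is the main obstacle in the argument: as $k\downarrow 1/2$ the H\"older exponent $2k/(2k-1)$ blows up, so only the ceiling term in $r_k$ keeps $|\mathcal{Q}_j|^2$ large enough to absorb the bad contribution throughout the full range of $k$.
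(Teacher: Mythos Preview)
Your proposal is correct and follows essentially the same approach as the paper. The paper omits the proof and cites \cite[Lemma~3.1]{Gao2021-4}, but the method is on display in the proof of the analogous Lemma~\ref{lem1'}: H\"older's inequality with exponents $2k$ and $2k/(2k-1)$, followed by a factorwise pointwise comparison that splits according to whether $|\mathcal{P}_j(\chi)|$ is small (so $E_{\ell_j}$ is well approximated by the exponential, cf.\ \eqref{Ebound} and \eqref{est1}) or large (so the $\mathcal{Q}_j$ term dominates, cf.\ \eqref{MestPlarge}); your identification of why the exponent $r_k$ must grow as $k\downarrow 1/2$ is exactly the point.
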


  We omit the proof of the above lemma since its proof is similar to that of \cite[Lemma 3.1]{Gao2021-4}.   It follows from this lemma that in order to establish Theorem \ref{thmlowerbound}, it suffices to prove the following two propositions.
\begin{proposition}
\label{Prop4} With the notations above, we have for $k>0$,
\begin{align*}
\sumstar_{\chi, q}L(\tfrac{1}{2},\chi) \mathcal{N}(\overline{\chi}, k) \mathcal{N}(\chi, k-1)\Phi\leg{q}{X}  \gg X(\log X)^{ k^2
} .
\end{align*}
\end{proposition}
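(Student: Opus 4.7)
My plan is to expand $\mathcal{N}(\chi, k-1)\mathcal{N}(\overline{\chi}, k)$ as a Dirichlet polynomial in $\chi$ and then apply Theorem \ref{twistedfirstmoment} term by term. Using the definition of $\mathcal{N}$ and the multinomial theorem on each $\mathcal{P}_j(\chi)^n$, one writes
\[
\mathcal{N}(\chi, k-1)\mathcal{N}(\overline{\chi}, k) = \sum_{A, B} \alpha(A) \beta(B) \chi(A)\overline{\chi}(B),
\]
where $A, B$ are positive integers supported on $\bigcup_{j=1}^R P_j$ with at most $\lceil \ell_j \rceil$ prime factors in each $P_j$, and $\alpha(A), \beta(B) \geq 0$ are multinomial coefficients carrying factors $(k-1)^{\Omega(A)}$ and $k^{\Omega(B)}$ respectively. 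Because $\chi^j = \chi_0$ with $j=3$ or $4$, we have $\overline{\chi}(B) = \chi(B^{j-1})$, so each summand becomes $\chi(AB^{j-1})$.

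Interchanging summations and applying Theorem \ref{twistedfirstmoment} with $\ell = AB^{j-1}$ yields, for each pair $(A, B)$, a main term proportional to $g(jAB^{j-1}) Z_K(u, AB^{j-1})/\sqrt{\ell_1^2\ell_2}$ (with $u = 3/2$ in the cubic case, $u = 2$ in the quartic case), plus an explicit error. The factor $1/\sqrt{\ell_1^2 \ell_2}$ equals $1$ precisely when $AB^{j-1}$ is a $j$-th power, i.e.\ when $v_p(A) \equiv v_p(B) \pmod j$ at every prime $p$; otherwise the contribution is negligible after summation. For the error: by \eqref{sumoverell}, $\sum_j 1/\ell_j \leq 2/\ell_R$, so $A, B \leq X^{2/\ell_R}$ and $AB^{j-1} \leq X^{2j/\ell_R}$. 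Taking $M$ (and hence $\ell_R$) large depending on $k$, Theorem \ref{twistedfirstmoment}'s error per pair is $O(X^{1-\delta})$ for some $\delta > 0$; with $X^{o(1)}$ total pairs and bounded coefficients, the collective error is $o(X)$.

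The surviving main term then factors as an Euler-like product over primes $p \in \bigcup_j P_j$, with local factors coming from $g$, $Z_K$, and the multinomial weights $\alpha, \beta$. Writing out the leading terms, the local factor at each prime $p$ is
\[
1 + \frac{k^2}{p} + O\!\left(p^{-3/2}\right),
\]
where the coefficient $k^2 = k(k-1) + k$ splits as a contribution of $k(k-1)/p$ from the $(v_p(A), v_p(B)) = (1, 1)$ bilinear diagonal, plus an extra $k/p$ coming from the Euler factor of $Z_K(u, \ell)$ at $p$ --- this latter piece implicitly encodes the self-interaction of the $L(1/2,\chi)$ factor at $p$. Multiplying over $p \in \bigcup_j P_j$ (which covers primes up to $X^{1/\ell_1^2}$) and applying Mertens' formula via Lemma \ref{RS} gives the desired lower bound $\gg X(\log X)^{k^2}$.

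The main technical obstacle is verifying the Euler factorization with coefficient $k^2$: one must unpack the definition of $Z_K(u, \ell)$, isolate its local Euler factor at each prime $p \mid \ell$, and confirm that this factor supplies the extra $k/p$ needed to upgrade the naive bilinear contribution $k(k-1)/p$ into the expected $k^2/p$. A secondary complication is to show that truncating each $E_{\ell_j}$ at degree $\lceil \ell_j \rceil$ does not destroy the leading asymptotic; the growth condition $\ell_{j+1} \geq 2N \log \ell_j$ imposed on $\{\ell_j\}$ ensures the truncation tails are negligible on the relevant scales.
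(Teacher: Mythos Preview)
Your overall architecture --- expand $\mathcal{N}(\chi,k-1)\mathcal{N}(\overline\chi,k)$ as a short Dirichlet polynomial, apply Theorem~\ref{twistedfirstmoment} term by term, factor the main term as an Euler product, and invoke Mertens --- matches the paper's. However, your account of \emph{where} the crucial extra $k/p$ comes from is incorrect, and following your stated reasoning literally would give the wrong exponent.

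You assert that the factor $1/\sqrt{\ell_1^2\ell_2}$ equals $1$ exactly when $AB^{j-1}$ is a perfect $j$-th power, and that ``otherwise the contribution is negligible after summation.'' This is false. Take the cubic case and a single prime $p$ with $(v_p(A),v_p(B))=(0,1)$: then $\ell=AB^2=p^2$, so $\ell_1=1$, $\ell_2=p$, and $1/\sqrt{\ell_1^2\ell_2}=p^{-1/2}$. Combined with the weight $1/\sqrt{AB}=p^{-1/2}$ and the coefficient $k$, this contributes $k/p$ --- not negligible at all. This is precisely the missing $k/p$ you are looking for. Conversely, your proposed source, the local Euler factor of $Z_K(3/2,\ell)$ at $p\mid\ell$, only differs from the $p\nmid\ell$ factor by $O(p^{-3/2})$ (since $Z_K(u,\ell)=\sum_m m^{-u}g(m/(m,|D_K|\ell))$ with $u=3/2$), so it cannot supply a $1/p$ term. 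If you discard the non-cube terms as you propose and rely on $Z_K$, the local factor you would actually compute is $1+k(k-1)/p+O(p^{-3/2})$, yielding $(\log X)^{k^2-k}$ rather than $(\log X)^{k^2}$. The paper absorbs $g(3\ell)Z_K(3/2,\ell)$ into a single multiplicative function $h$ with $h(p^i)=1+O(1/p)$ and then reads off directly that only the local configurations $(\alpha,\alpha')=(1,1)$ and $(0,1)$ give a $1/p$, with respective weights $k(k-1)$ and $k$.

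Two smaller points: your claim that $\alpha(A),\beta(B)\ge 0$ is only true for $k\ge 1$; for $0<k<1$ the coefficient $(k-1)^{\Omega(A)}$ changes sign, so the argument must track signed sums (the paper does this and bounds the truncation error via Rankin's trick with $|k-1|^{\Omega}$). And ``the growth condition on $\{\ell_j\}$ ensures the truncation tails are negligible'' needs to be made quantitative --- the paper uses $2^{\Omega(n_j)-\ell_j}\ge 1$ on the complementary range together with \eqref{boundsforsumoverp} to get an error of size $2^{-\ell_j/2}$ times the main term.
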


\begin{proposition}
\label{Prop6} With the notations above, we have for $k>0$,
\begin{align}
\label{N2estmation}
\sumstar_{\chi, q} \prod^R_{j=1}\big ( |{\mathcal N}_j(\chi, k)|^2+ |{\mathcal Q}_j(\chi,k)|^2 \big ) \Phi\leg{q}{X} \ll X(\log X)^{ k^2
} .
\end{align}
\end{proposition}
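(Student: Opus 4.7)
The plan is to expand the product over $j$ as a sum over subsets $S \subseteq \{1,\ldots,R\}$,
\begin{align*}
\prod_{j=1}^R\bigl(|\mathcal{N}_j(\chi,k)|^2 + |\mathcal{Q}_j(\chi,k)|^2\bigr) = \sum_{S \subseteq \{1,\ldots,R\}}\prod_{j\in S}|\mathcal{Q}_j(\chi,k)|^2\prod_{j\notin S}|\mathcal{N}_j(\chi,k)|^2,
\end{align*}
and show that $S = \emptyset$ furnishes the main term of size $X(\log X)^{k^2}$, while the remaining $2^R-1$ subsets contribute negligibly.

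For the main term $S=\emptyset$, I would first observe that each $\mathcal{N}_j(\chi,k) = E_{\ell_j}(k\mathcal{P}_j(\chi))$ is a Dirichlet polynomial supported on products of at most $\lceil\ell_j\rceil$ primes from $P_j$, so $\prod_j|\mathcal{N}_j(\chi,k)|^2$ is a Dirichlet polynomial of total length at most $X^{2\sum_j \ell_j/\ell_j^2} \leq X^{4/\ell_R}$, a negligible power of $X$ by \eqref{sumoverell}. Applying Lemma \ref{PropDirpoly} coefficient by coefficient, the character orthogonality retains only those prime products forming perfect cubes (resp.\ fourth powers), with the $X^{1/2+\varepsilon}c^{1/2+\varepsilon}$ error absorbed thanks to the polynomial-length bound. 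A routine Euler-product computation combined with Mertens' theorem (Lemma \ref{RS}) then yields
\begin{align*}
\sumstar_{\chi, q}\prod_{j=1}^R|\mathcal{N}_j(\chi,k)|^2\Phi\leg{q}{X} \ll X\prod_{j=1}^R\exp\Bigl(k^2\sum_{p\in P_j}\frac{1}{p}\Bigr) \ll X(\log X)^{k^2}.
\end{align*}

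For a non-empty subset $S$, I would exploit the super-exponentially small prefactor $(12k^2/\ell_j)^{2r_k\ell_j}$ inside each $|\mathcal{Q}_j(\chi,k)|^2$. Applying H\"older's inequality to decouple the factors across $j$ and then bounding the high-moment quantities $\sumstar_{\chi,q}|\mathcal{P}_j(\chi)|^{2r_k\ell_j}\Phi(q/X)$ via Lemma \ref{lem:2.5} with $m=r_k\ell_j$, one gets a main-term contribution of order roughly $X \cdot (r_k\ell_j)!\,(\log\ell_{j-1})^{r_k\ell_j}$ after a Stirling estimate, while the companion $|\mathcal{N}_j|^2$ factors for $j\notin S$ are handled exactly as in the main term. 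Multiplying by the tiny prefactor and using the growth condition $\ell_j > \ell_{j+1}^2$ from \eqref{sumoverell}, each $j\in S$ introduces a factor which decays super-polynomially in $\ell_R$. Summing over all non-empty $S$ therefore gives a bound of $o(X(\log X)^{k^2})$, confirming \eqref{N2estmation}.

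The principal obstacle is the combinatorial bookkeeping in the main-term diagonal: one must verify that, on expanding $|\mathcal{N}_j(\chi,k)|^2 = E_{\ell_j}(k\mathcal{P}_j)\overline{E_{\ell_j}(k\mathcal{P}_j)}$ and keeping only the cube (resp.\ fourth-power) diagonal forced by Lemma \ref{PropDirpoly}, the dominant contribution comes from the pairing in which each prime from $\mathcal{P}_j$ is matched to its conjugate in $\overline{\mathcal{P}_j}$, with higher-order matchings involving three or more identical primes from $P_j$ being suppressed by positive powers of $(\log \ell_{j-1})/X^{1/\ell_{j-1}^2}$. A secondary technical point is checking that the $X^{1/2+\varepsilon}c^{1/2+\varepsilon}$ error from Lemma \ref{PropDirpoly}, when summed over coefficients of the expanded Dirichlet polynomial of length $X^{4/\ell_R}$, remains $o(X)$; this is precisely why $M$ (and hence $\ell_R$) must be taken sufficiently large in terms of $k$.
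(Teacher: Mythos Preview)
Your subset decomposition and the treatment of the main term $S=\emptyset$ are essentially what the paper does. The real issue is your handling of the nonempty subsets: invoking H\"older to decouple the factors across $j$ is both unnecessary and genuinely lossy. Any honest application of H\"older forces each $|\mathcal{N}_j|^2$ with $j\notin S$ to be raised to some exponent $q_j>1$, and the resulting diagonal then contributes $\exp\bigl(q_jk^2\sum_{p\in P_j}1/p\bigr)$ rather than $\exp\bigl(k^2\sum_{p\in P_j}1/p\bigr)$. For $j=1$ this costs an extra $(\log X)^{(q_1-1)k^2}$, which the saving $e^{-c\ell_{j_0}}$ from a single $\mathcal{Q}_{j_0}$ factor cannot absorb when, say, $j_0=R$ (where $\ell_R$ is merely a large constant). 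Moreover, after H\"older you would need to bound $\sumstar_{\chi,q}|\mathcal{P}_j|^{2q_jr_k\ell_j}$, not the $m=r_k\ell_j$ moment you quote from Lemma~\ref{lem:2.5}; so the claim that the $|\mathcal{N}_j|^2$ factors for $j\notin S$ ``are handled exactly as in the main term'' is inconsistent with the H\"older step.

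The paper sidesteps this entirely. The key observation you are missing is that because the sets $P_j$ are pairwise disjoint, the full product $\prod_{j=1}^R\bigl(|\mathcal{N}_j|^2+|\mathcal{Q}_j|^2\bigr)$ is itself a single short Dirichlet polynomial (of length well below $X^{1/2}$ by \eqref{sumoverell}), and Lemma~\ref{PropDirpoly} applies to it in one stroke. Disjointness of the $P_j$ forces the cube (resp.\ fourth-power) condition on the diagonal to factor exactly over $j$, yielding
\[
\sumstar_{\chi,q}\prod_{j=1}^R\bigl(|\mathcal{N}_j|^2+|\mathcal{Q}_j|^2\bigr)\Phi\leg{q}{X}\ \ll\ X\prod_{j=1}^R\bigl(A_j+B_j\bigr),
\]
where $A_j$ is the cube-diagonal contribution from $|\mathcal{N}_j|^2$ and $B_j$ that from $|\mathcal{Q}_j|^2$. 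A Rankin-type bound on the $B_j$ sum, combined with Stirling and \eqref{boundsforsumoverp}, gives $B_j\ll e^{-\ell_j}A_j$, whence $\prod_j(A_j+B_j)\ll\prod_j A_j$, which is precisely your main term. No H\"older and no Lemma~\ref{lem:2.5} are needed. (A small side remark: the higher-order matchings in the diagonal are suppressed by the Euler-product factor $O(p^{-3/2})$ versus $k^2/p$, not by inverse powers of $X$ as you write.)
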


In the next two sections, we give proofs of the above two propositions. As the proofs are similar for cubic and quartic characters, we only consider
the case for cubic characters.

\subsection{Proof of Proposition \ref{Prop4}}
\label{sec 4}

Let $w(n)$ be the multiplicative function with $w(p^{\alpha}) = \alpha!$ for prime powers $p^{\alpha}$ and write $\Omega(n)$ for the number of distinct prime powers dividing $n$.  Moroever, let $b_j(n)$, $1 \leq j \leq R$ be the function such that $b_j(n)\in \{ 0 ,1 \}$ and that $b_j(n)=1$ if and only if when $n$ is composed of at most $\ell_j$ primes, all from the interval $P_j$. \newline

    These notations allow us to write for any real number $\alpha$,
\begin{equation}
\label{5.1}
{\mathcal N}_j(\chi, \alpha) = \sum_{n_j} \frac{1}{\sqrt{n_j}} \frac{\alpha^{\Omega(n_j)}}{w(n_j)}  b_j(n_j) \chi(n_j), \quad 1\le j\le R.
\end{equation}

    As $b_j(n_j)=0$ unless $n_j \leq
    (X^{1/\ell_j^2})^{\ell_j}=X^{1/\ell_j}$, each ${\mathcal N}_j(\chi, \alpha)$ is a short Dirichlet polynomial. As a consequence, both ${\mathcal N}(\chi, k)$ and ${\mathcal N}(\chi, k-1)$ are short Dirichlet
    polynomials with lengths at most $X^{1/\ell_1+ \ldots +1/\ell_R} < X^{2/10^{M}}$ by \eqref{sumoverell}. Moreover,
     for each $\chi$ modulo $q$,
\begin{align*}
 & {\mathcal N}(\overline{\chi}, k){\mathcal N}(\chi, k-1) \ll X^{2(1/\ell_1+ \ldots +1/\ell_R)} < X^{4/10^{M}}.
\end{align*}

  Write for simplicity that
\begin{align*}
 {\mathcal N}(\chi, k-1)= \sum_{a  \leq X^{2/10^{M}}} \frac{x_a}{\sqrt{a}} \chi(a) \quad \mbox{and} \quad \mathcal{N}(\overline{\chi}, k) = \sum_{b  \leq
 X^{2/10^{M}}} \frac{y_b}{\sqrt{b}}\overline{\chi}(b).
\end{align*}

  We now apply Theorem \ref{twistedfirstmoment} to obtain that
\begin{align}
\label{L1lowerbound}
\begin{split}
 \sum_{(q,3)=1} \ \sumstar_{\substack{\chi \shortmod{q} \\ \chi^3 = \chi_0}} L \left( \frac{1}{2}, \chi \right) \mathcal{N}(\overline{\chi}, k) \mathcal{N}(\chi, k-1)\Phi\leg{q}{X}
= & \sum_{a} \sum_{b} \frac {x_a y_b}{\sqrt{ab}}\sum_{(q,3)=1} \ \sumstar_{\substack{\chi \shortmod{q} \\ \chi^3 = \chi_0}} L \left( \frac{1}{2}, \chi \right)\chi(ab^2) \Phi\leg{q}{X}\\
\gg & \sum_{a} \sum_{b} \frac {x_a y_b}{\sqrt{ab}} g(3ab^2) X \frac 1{\sqrt{(ab^2)^2_1(ab^2)_2}}Z_{\mq(\omega)} \left( \frac32, ab^2 \right).
\end{split}
\end{align}
We remind the reader here that the subscripts in $(ab^2)_1$ and $(ab^2)_2$ are used in the same sense as the decomposition of $l$ in the statement of Theorem~\ref{twistedfirstmoment}.  Namely, the positive integer $ab^2$ is uniquely written as the product of positive integers $(ab^2)_1$, $(ab^2)_2^2$ and $(ab^2)_3^3$ with $(ab^2)_1$, $(ab^2)_2$ both square-free and relatively prime.  We note that the contribution of the error term arising from \eqref{eq:1} in the above process is negligible since that $a, b \leq  X^{2/10^{M}}$ and $x_a, y_b \ll 1$. \newline

  We further write $ g(3ab^2) Z_{\mq(\omega)}(3/2, ab^2)$ as a constant multiple of $h(ab^2)$,
  where $h$ is a multiplicative function satisfying 
  \begin{equation} \label{hest}
  h(p^i)=1+O(1/p)
  \end{equation}
  for all primes $p$ and integers $i \geq 1$.
  This then implies that the last expression in \eqref{L1lowerbound} is
\begin{align}
\label{sumab}
\begin{split}
\gg & \sum_{a} \sum_{b} \frac {x_a y_b}{\sqrt{ab}} h(ab^2) \frac X{\sqrt{(ab)^2_1(ab^2)_2}} \\
=& X \prod^R_{j=1}\left( \sum_{n_j, n'_j} \frac{1}{\sqrt{n_jn'_j}} \frac{1}{\sqrt{(n_j{n'_j}^2)^2_1(n_j{n'_j}^2)_2}}h(n_j{n'_j}^2)\frac{(k-1)^{\Omega(n_j)}}{w(n_j)}  b_j(n_j) \frac{k^{\Omega(n'_j)}}{w(n'_j)}
 b_j(n'_j) \right).
\end{split}
\end{align}

 For a fixed $j$ with $1 \leq j \leq R$ in \eqref{sumab}, we consider the sum above over $n_j, n'_j$ by noting that the factors $b_j(n_j), b_j(n'_j)$ restricts $n_j, n'_j$ to have all prime factors in $P_j$ such that $\Omega(n_j), \Omega(n'_j) \leq \ell_j$.
  If we remove these restrictions on $\Omega(n_j)$ and $\Omega(n'_j)$, then the sum involves with multiplicative functions so that one evaluates
  it to be
\begin{align*}
\begin{split}
&  \prod_{\substack{p_i \in P_j }}\left( \sum_{\alpha_i, \alpha'_i \geq 0}  \frac{1}{\sqrt{p^{\alpha_i+\alpha'_i}_{i}}}\frac{1}{\sqrt{(p^{\alpha_i+2\alpha'_i}_{i})^2_1(p^{\alpha_i+2\alpha'_i}_{i})_2}} h \left( p^{\alpha_i+2\alpha'_i}_{i} \right) \frac{(k-1)^{\alpha_i}}{\alpha_i!}  \frac{k^{\alpha'_i}}{\alpha'_i!}
  \right).
\end{split}
\end{align*}

Inspecting the inner sum above, we see that only the terms in which $\alpha_i=\alpha'_i=1$ and $\alpha_i=0$, $\alpha'_i=1$ yield a constant multiple of $1/p_i$; the other terms in which either $\alpha_i$ or $\alpha'_i \geq 1$ leads to terms of size $\ll p^{-3/2}$, mindful of \eqref{hest}.  We then recast the above expression as
\begin{align}
\label{6.02}
\begin{split}
&  \prod_{\substack{p\in P_j }}\left(1+ \frac {k(k-1)+k}p+O \left( \frac 1{p^{3/2}} \right) \right).
\end{split}
\end{align}

   To estimate the error introduced in the above process, we apply Rankin's trick by noticing that $2^{\Omega(n_j)-\ell_j}\ge 1$ if $\Omega(n_j) > \ell_j$. Thus we see that the error is
\begin{align}
\label{errorbound}
\begin{split}
\le & \sum_{n_j, n'_j} \frac{1}{\sqrt{n_jn'_j}} \frac{1}{\sqrt{(n_j{n'_j}^2)^2_1(n_j{n'_j}^2)_2}}h \left( n_j{n'_j}^2 \right)\frac{k^{\Omega(n_j)}2^{\Omega(n_j)-\ell_j}}{w(n_j)} \frac{(1-k)^{\Omega(n'_j)}}{w(n'_j)}
  \\
\le & 2^{-\ell_j} \prod_{\substack{p\in P_j }} \left( 1+ \frac {2k(1-k)}p+O \left( \frac 1{p^2} \right) \right) \leq 2^{-\ell_j/2}\prod_{\substack{p\in P_j }}\left(1+ \frac {k^2}p+O \left( \frac 1{p^2} \right) \right),
\end{split}
\end{align}
  where we obtain the last estimation above by observing that it follows from Lemma \ref{RS} that when $N$ is large enough, we have
\begin{align}
\label{boundsforsumoverp}
   \sum_{p \in P_j}\frac 1{p} \leq \frac{\ell_j}{N} .
\end{align}

  We then deduce from \eqref{6.02}, \eqref{errorbound} and Lemma \ref{RS} that we have
\begin{align*}
\sum_{(q,3)=1} \ \sumstar_{\substack{\chi \shortmod{q} \\ \chi^3 = \chi_0}} & L(1/2,\chi) \mathcal{N}(\overline{\chi}, k) \mathcal{N}(\chi, k-1)\Phi\leg{q}{X} \\
\gg & X \prod^R_{j=1}\Big (1+O( 2^{-\ell_j/2})\Big )\prod^R_{j=1}\prod_{\substack{p\in P_j }}\left(1+ \frac {k(k-1)+k}p+O\left( \frac 1{p^2} \right) \right) \gg
X(\log X)^{k^2}.
\end{align*}
 This completes the proof of the proposition.

\subsection{Proof of Proposition \ref{Prop6}}

   We apply Lemma \ref{PropDirpoly} to evaluate the left-hand of \eqref{N2estmation} and
   we may ignore the contribution from the error term in \eqref{cubicandquarticcharsum} in this process in view of \eqref{sumoverell}.
   Using the expression for ${\mathcal N}_j(\chi, \alpha)$ in
   \eqref{5.1}, we see that
\begin{align}
\label{maintermbound}
\begin{split}
 \sum_{(q,3)=1}\;  \sumstar_{\substack{\chi \shortmod{q} \\ \chi^3 = \chi_0}} \prod^R_{j=1} & \Big ( |{\mathcal N}_j(\chi, k)|^2+ |{\mathcal Q}_j(\chi,k)|^2 \Big ) \Phi\leg{q}{X} \\
\ll & X \prod^R_{j=1} \left(  \sum_{\substack{n_j, n'_j \\ n_j{n'_j}^2=\text{cube}}}
\frac{k^{\Omega(n_j)+\Omega(n'_j)}}{\sqrt{n_jn'_j} w(n_j) w(n'_j)}  b_j(n_j)b_j(n'_j)  \right. \\
& \hspace*{3cm} \left. + \Big( \frac{12k^2} {\ell_j}\Big)^{2r_k\ell_j}((r_k\ell_j)!)^2 \sum_{ \substack{ \Omega(n_j)=\Omega(n'_j) = r_k\ell_j \\ p|n_jn'_j \implies  p\in P_j\\n_j{n'_j}^2=\text{cube} }} \frac{1 }{\sqrt{n_jn'_j} w(n_j) w(n'_j)} \right).
\end{split}
\end{align}

  Arguing as in the proof of Proposition \ref{Prop4}, we get that
\begin{align} \label{sqinN}
\begin{split}
   \sum_{\substack{n_j, n'_j \\ n_j{n'_j}^2=\text{cube}}} \frac{k^{\Omega(n_j)+\Omega(n'_j)}}{\sqrt{n_jn'_j}  w(n_j) w(n'_j)}  b_j(n_j)b_j(n'_j) =\Big(1+ O\big(2^{-\ell_j/2} \big ) \Big)
  \exp \left( \sum_{p \in P_j} \frac {k^2}{p}+ O\left( \sum_{p \in P_j} \frac {1}{p^{3/2}} \right) \right).
\end{split}
\end{align}
   Note also that,
\begin{align*}
\begin{split}
\sum_{ \substack{ \Omega(n_j)=\Omega(n'_j) = r_k\ell_j \\ p|n_jn'_j \implies  p\in P_j\\n_j{n'_j}^2=\text{cube} }} \frac{1 }{\sqrt{n_jn'_j}  w(n_j) w(n'_j)}   \leq &  \sum_{ \substack{  p|n_jn'_j \implies  p\in P_j \\n_j{n'_j}^2=\text{cube} }} \frac{(12 k^2 r_k)^{\Omega(n_j)+\Omega(n'_j)-2r_k\ell_j}  }{\sqrt{n_jn'_j}  w(n_j) w(n'_j)} \\
\leq & (12 k^2 r_k)^{-2r_k\ell_j} \prod_{p \in P_j} \left( 1+  \frac {(12 k^2 r_k)^2}{p}+ O \left( \frac {1}{p^{3/2}} \right) \right).
\end{split}
\end{align*}

 To treat the term $((r_k\ell_j)!)^2$ in \eqref{maintermbound} and for later purposes, we note the following estimations.
\begin{align}
\label{Stirling}
 \binom {n}{m} \leq \left( \frac {en}{m} \right)^m, \quad \left( \frac ne \right)^n \leq n! \leq n \left( \frac ne \right)^n.
\end{align}
  We apply the above and \eqref{boundsforsumoverp} to see that by taking $M, N$ large enough,
\begin{align} \label{Qest}
\begin{split}
 \Big( \frac{12k^2} {\ell_j}\Big)^{2r_k\ell_j}((r_k\ell_j)!)^2 \sum_{ \substack{ \Omega(n_j)=\Omega(n'_j) = r_k\ell_j \\ p|n_jn'_j \implies  p\in P_j\\n_j{n'_j}^2=\text{cube} }} \frac{1 }{\sqrt{n_jn'_j}  w(n_j) w(n'_j)} \ll & (r_k\ell_j)^2e^{-2r_k\ell_j}\prod_{p \in P_j} \left( 1+  \frac {(12 k^2 r_k)^2}{p}+ O\left( \frac {1}{p^{3/2}} \right) \right) \\
\ll & e^{-\ell_j} \exp \left( \sum_{p \in P_j} \frac {k^2}{p}+ O \left( \sum_{p \in P_j} \frac {1}{p^2} \right) \right).
\end{split}
\end{align}
 The assertion of the proposition now follows by using \eqref{sqinN} and \eqref{Qest} in \eqref{maintermbound} and then applying Lemma \ref{RS}.

\section{Proof of Theorem \ref{thmupperbound}}
\label{sec:upper bd}

\subsection{A first treatment}

  In the course of proving Theorem \ref{thmupperbound}, we need to first establish some weaker upper bounds for moments of the related families of $L$-functions in this section.
  Let $X$ be a large number and set $\mathcal{N}_3(V, X)$ (or $\mathcal{N}_4(V, X)$) to be the number of primitive cubic (or quartic)
  Dirichlet characters $\chi \bmod q$ whose square remain primitive such that $X/2 < q\leq X$ and $\log|L(1/2, \chi)|\geq V$.
  Our estimations require the following upper bounds for $\mathcal{N}_i(V, X), i=3, 4$.

\begin{prop}
\label{propNbound}
Assume the truth of GRH for $\zeta(s)$ and for $L(s,\chi)$ for all primitive cubic and quartic Dirichlet characters $\chi$. Let $i=3, 4$  and $k>0$ be a fixed real number.
 If $10\sqrt{\operatorname{log}\operatorname{log} X}\leq V \leq 10^{4+4k} \log \log X$,  then
 \begin{align*}
  \mathcal{N}_i(V,X)\ll X(\log \log X)\operatorname{exp}\left(-\frac{V^2}{\log \log X}
  \left(1-\frac{2\cdot 10^{6+4k}}{\operatorname{log}\operatorname{log}\operatorname{log}X}\right)\right).
 \end{align*}
  If $10^{4+4k} \log \log X <V \leq 6\log X/\log \log X$, we have
 \begin{align*}
  \mathcal{N}_i (V,X)\ll XV^2 \operatorname{exp}\left(-(2+4k) V \right).
 \end{align*}
\end{prop}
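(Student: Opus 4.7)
The plan is to adapt Soundararajan's large-deviation method \cite{Sound2009} (with Harper's refinement \cite{Harper}) to our families of cubic and quartic characters. As the two cases are essentially identical, we focus on the cubic case throughout.

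The starting point is Lemma \ref{lem: logLbound1}. If $\log|L(1/2,\chi)| \geq V$ and $q \leq X$, then the relevant short prime sum $\Re S(\chi, x)$ must be at least $V - \tfrac{1+\lambda_0}{2}\tfrac{\log X}{\log x} - C_0\log\log\log X =: V'$ (using \eqref{equ:3.3}). With the weights $a(p) = p^{-\lambda/\log x}\log(x/p)/\log x$, one has $\sum_{p \leq x} |a(p)|^2/p = \log\log x + O(1)$, so Lemma \ref{lem:2.5} (whose main term is dominated by the $i=0$ contribution $m!(\sum|a(p)|^2/p)^m$ for our choice of $a(p)$) combined with Chebyshev's inequality gives
\begin{equation*}
\mathcal{N}_i(V, X) \ll (V')^{-2m}\Bigl(X\cdot m!(\log\log x)^m + X^{1/2+\varepsilon} x^{2m(1+\varepsilon)}(\log\log x)^m\Bigr),
\end{equation*}
for any positive integer $m$. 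The second term is negligible provided $m \log x \leq \tfrac{1}{4}\log X$.

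For the first range $10\sqrt{\log\log X} \leq V \leq 10^{4+4k}\log\log X$, take $\log x = (\log X)(\log\log\log X)/V$ so that the main term in the log-$L$ bound is $\ll V/\log\log\log X$ and hence $V' = V(1 - O(1/\log\log\log X))$. Pick $m$ at the Gaussian optimum $\lfloor (V')^2/\log\log x\rfloor$. The constraint $m \leq V/(4\log\log\log X)$ is satisfied throughout the stated range since $V \leq 10^{4+4k}\log\log X$, and Stirling's formula then produces a bound of the form $X(\log\log X)\exp\bigl(-V^2(1-C/\log\log\log X)/\log\log X\bigr)$ after carefully tracking the constants to identify $C$ with $2\cdot 10^{6+4k}$.

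For the second range $V > 10^{4+4k}\log\log X$, the Gaussian optimum exceeds the admissible moment order. Instead, use the refined inequality \eqref{equ:3.3'}, take $\log x$ of size $\log X/V$ so that $\log X/\log x = O(V)$ absorbs only a definite fraction of $V$, and choose $m$ at the admissibility boundary $\lfloor \log X/(4\log x)\rfloor \asymp V$. Stirling's formula then yields a bound of the form $XV^2\exp(-cV)$ for any desired constant $c$; fixing the implicit parameters so that $c \geq 2+4k$ gives the claim. The main obstacle is in this second range, where the standard Gaussian optimization breaks down: one must carefully balance the choice between \eqref{equ:3.3} and \eqref{equ:3.3'}, the size of $x$, and the moment order $m$ to extract the correct exponential decay, while simultaneously respecting the error-term constraint from Lemma \ref{lem:2.5}.
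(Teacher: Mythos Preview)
Your plan has a genuine gap in the first range. With $\log x = (\log X)(\log\log\log X)/V$, the error-term constraint $m\log x \leq \tfrac14\log X$ from Lemma~\ref{lem:2.5} forces $m \leq V/(4\log\log\log X)$, exactly as you say. But your Gaussian choice $m \approx (V')^{2}/\log\log x \approx V^{2}/\log\log X$ satisfies this only when $V \ll (\log\log X)/\log\log\log X$; for $V$ near the top of the range, say $V = 10^{4+4k}\log\log X$, the Gaussian $m$ exceeds the admissible order by a factor of $\asymp \log\log X \cdot \log\log\log X$. So the claim ``the constraint $m\le V/(4\log\log\log X)$ is satisfied throughout the stated range'' is false, and without the Gaussian $m$ you cannot reach the exponent $-V^{2}/\log\log X$.

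The paper's remedy (following Soundararajan) is to split the prime sum at $z = x^{1/\log\log X}$ into a long piece $M_{1}$ over $p\le z$ and a tail $M_{2}$ over $z<p\le x$. For $M_{1}$ the error constraint in Lemma~\ref{lem:2.5} becomes $m\log z \le \tfrac25\log X$, which is weaker by a factor $\log\log X$ and now accommodates the Gaussian $m \approx V^{2}/\log\log X$ across the full first range (and $m=\lceil V\rceil$ in the second). The tail $M_{2}$ has tiny variance $\sum_{z<p\le x}1/p \ll \log\log\log X$, so a much smaller moment order $m=\lceil V/(8A)\rceil$ already gives a bound dominated by the $M_{1}$ contribution. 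This splitting is the missing idea. Also, the paper does not switch to \eqref{equ:3.3'} in the second range; it keeps \eqref{equ:3.3} throughout and takes $m=\lceil V\rceil$ for $M_1$, handling separately the two sub-cases $\lceil V\rceil \lessgtr (e/6)^{2}(\log\log X)^{3}$ in the combinatorial sum over $i$ from Lemma~\ref{lem:2.5}.
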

\begin{proof}
  As the proofs are similar, we only prove the case of $i=3$ here. We apply \eqref{equ:3.3} by setting $x=X^{A/V}$ there with
 $A =\log\log\log X/10^{6+4k}$. We further let $z=x^{1/\log \log X}$.   Write $M_1$ for the real part of the sum in \eqref{equ:3.3} truncated to $p \leq z$ and $M_2$ the complementary sum over $z < p \leq x$.  It then follows that
\[
 \log  |L(1/2, \chi)|
    \leq M_1 + M_2 + \frac {1+\lambda_0}{2}V + O(\log \log \log X).
\]
Hence if  $\log  |L(1/2, \chi)| \geq V $, then we have either
\[ M_2 \geq \frac{V}{8A} \quad \mbox{or} \quad M_1 \geq V_1 := V \left(1-\frac{7}{8A} \right). \]

Now, we set
\[ \operatorname{meas}(X;M_1) = \# \{\text{primitive cubic Dirichlet characters with conductors not exceeding  $X$}: M_1 \geq V_1 \} \]
and
\[ \operatorname{meas}(X;M_2) = \# \left\{ \text{primitive cubic Dirichlet characters with conductors not exceeding  $X$}: M_2 \geq \frac{V}{8A}  \right\}. \]

We take $m = \lceil V/8A \rceil $ in Lemma \ref{lem:2.5}, where $\lceil x \rceil = \min \{ n \in \intz : n \geq x\}$, getting
\begin{align} \label{M1bound}
\begin{split}
(V/8A)^{2m} & \operatorname{meas}(X;M_2) \leq \sum_{\substack{(q,3)=1 \\ X/2<q \leq X}} \ \sumstar_{\substack{\chi \shortmod{q} \\ \chi^3 = \chi_0}}|M_2|^{2m} \\
 \ll & X\sum^{\lceil m/3\rceil }_{i=0}m! \binom {m}{3i} \binom {3i}{i}\binom {2i}{i}\frac {i!}{36^i} \Big (\sum_{z<p \leq x} \frac {1}{p}\Big )^{m-3i}\Big (\sum_{z<p \leq x} \frac {1}{p^{3/2}}\Big )^{2i}+X^{1/2+\varepsilon}x^{2m+2m\varepsilon} \Big( \sum_{z<p \leq x} \frac {1}{p} \Big)^{m}.
\end{split}
\end{align}

  We note that
\begin{align}
\label{p32}
 \sum_{p } \frac {1}{p^{3/2}} \leq \sum_{n \geq 2} \frac {1}{n^{3/2}} \leq 2.
\end{align}

   Moreover, applying \eqref{Stirling}, we see that for $i \geq 1$,
\begin{align*}
  m! \binom {m}{3i} \binom {3i}{i}\binom {2i}{i}\frac {i!}{36^i} \leq   m \Big (\frac {m}{e} \Big )^m \frac {m^{3i}}{i^{2i}}.
\end{align*}
  As the function $x \mapsto m^{3x}/x^{2x}$ is increasing for $x \leq m/3$ if $m \geq e$, we deduce from this and check directly for the case $i=0$ that we have for all $i \geq 0$ and $m \geq e$,
\begin{align}
\label{binombound}
  m! \binom {m}{3i} \binom {3i}{i}\binom {2i}{i}\frac {i!}{36^i} \leq  m^{4m/3+1}.
\end{align}

  It follows from this, \eqref{M1bound}, \eqref{p32} and Lemma~\ref{RS},
\begin{align*}
 & \Big( \frac {V}{8A} \Big)^{2m} \operatorname{meas}(X;M_2)
 \ll  X m^2 (2m)^{4m/3}    \left(\sum_{\substack{z<p  \leq x}}
\frac{1}{p}\right)^m  \ll X m^2 (2m)^{4m/3}   \Big (\log \log \log X+O(1) \Big )^m.
\end{align*}

   We then deduce from the above that
\begin{equation}
\label{equ:bd-S-2}
\operatorname{meas}(X;M_2) \ll X\operatorname{exp}\left(-\frac{V}{20A}\log V\right).
\end{equation}

Next, we estimate $\operatorname{meas}(X;M_1)$.  Taking $m=\lceil V^2_1/\log \log X \rceil $ if $V \leq 10^{4+4k}(\log \log X)$ and $ m = \lceil V\rceil $ otherwise, we have, for $X$ large,
\begin{align*}
m \leq \frac{(1/2-0.1)\log X}{ \log z}.
\end{align*}
Now Lemma \ref{lem:2.5}, \eqref{p32} and \eqref{binombound} give
\begin{align}
\label{equ:3.4}
\begin{split}
V^{2m}_1\operatorname{meas}(X;M_1) \leq  &  \sum_{\substack{(q,3)=1 \\ X/2<q \leq X}} \ \sumstar_{\substack{\chi \shortmod{q} \\ \chi^3 = \chi_0}} |M_1|^{2m} \ll X\sum^{\lceil m/3\rceil }_{i=0}m! \binom {m}{3i} \binom {3i}{i}\binom {2i}{i}\frac {i!}{36^i} \Big (\sum_{p \leq z} \frac {1}{p}\Big )^{m-3i}\Big (\sum_{p \leq z} \frac {1}{p^{3/2}}\Big )^{2i} \\
\ll & X m \Big (\frac {m}{e} \Big )^m \Big (\log \log X \Big )^{m}\Big (1+ \sum^{\lceil m/3\rceil }_{i=1}\Big (\frac {m 2^{2/3}}{i^{2/3}}\Big )^{3i}\Big (\log \log X \Big )^{-3i}\Big ).
\end{split}
\end{align}
  Note if $V \leq 10^{4+4k}(\log \log X)$, then
\begin{align*}
\begin{split}
 \Big (\frac {m 2^{2/3}}{i^{2/3}}\Big )^{3i}\Big (\log \log X \Big )^{-3i} \leq \Big (\frac {10^{9+8k}}{i^{2/3}}\Big )^{3i}.
\end{split}
\end{align*}
When summed over over $i \geq 1$, the expression on the right-hand side of the above gives a convergent series.  Hence we conclude from this and \eqref{equ:3.4} that if $V \leq 10^{4+4k}(\log \log X)$,
\begin{align}
\label{M1bound1}
\operatorname{meas}(X ; M_1)
\ll X m \left(\frac{m \log \log X}{eV_1^2} \right)^m \ll  X(\log \log X)\operatorname{exp}\left(-\frac{V_1^2}{\log \log X} \right).
\end{align}

   For $V \geq 10^{4+4k}(\log \log X)$, we consider the function
\begin{align*}
\begin{split}
  s(y)= \Big (\frac {m 2^{2/3}}{y^{2/3}}\Big )^{3y}\Big (\log \log X \Big )^{-3y} .
\end{split}
\end{align*}
Differentiating, we get that $s(y)$ is increasing if
\begin{equation} \label{incrcond}
3 \log \left( \frac{m 2^{2/3}}{ e^{2/3} \log \log X} \right) \geq 2 \log y .
\end{equation}
As $1 \leq y \leq m/3$ under our consideration, \eqref{incrcond} certainly holds if $m \geq (e/6)^2(\log \log X)^3$ so that $s(y)$ achieves its maximal value in the interval $[1,m/3]$ at $y=m/3$ with the maximum value
\begin{align*}
\begin{split}
  \Big ( m^{1/3} 6^{2/3}\Big )^{m}\Big (\log \log X \Big )^{-m}.
\end{split}
\end{align*}
   This implies that when $V \geq 10^{4+4k}(\log \log X)$ and $\lceil V\rceil  \geq (e/6)^2(\log \log X)^3$, we have
\begin{align}
\label{M1bound2}
\begin{split}
\operatorname{meas}(X ; M_1)
\ll& X m \Big (\frac {m\log \log X}{e V_1^2} \Big )^m \Big (1+m \Big ( m^{1/3} 6^{2/3}\Big )^{m}\Big (\log \log X \Big )^{-m} \Big )
\ll X V \left(\frac{1}{10^{2+4k}}\right)^{V}+X V^2\Big (\frac {V^{4/3}}{V^2_1} \Big )^{V} \\
\ll&  XV^2 \operatorname{exp}\left(-(2+4k) V \right).
\end{split}
\end{align}

  On the other hand, the function $s(y)$ is maximized when $m \leq (e/6)^2(\log \log X)^3$ at $y = (m/\log \log X)^{3/2}2/e$ with the maximal value
\[   \exp \left( \frac{4}{e} \left( \frac{m}{\log \log X} \right)^{3/2} \right). \]

  This implies that when $V \geq 10^{4+4k}(\log \log X)$ and $\lceil V\rceil  \leq (e/6)^2(\log \log X)^3$, we have
\begin{align}
\label{M1bound3}
\begin{split}
\operatorname{meas}(X ; M_1)
\ll& X m \Big (\frac {m\log \log X}{e V_1^2} \Big )^m \left( 1+m \exp \left( \frac{4}{e} \left( \frac{m}{\log \log X} \right)^{3/2} \right) \right).
\end{split}
\end{align}
  Note that if $\lceil V\rceil  \leq (e/6)^2(\log \log X)^3$, then
\begin{align*}
\begin{split}
( m/\log \log X)^{3/2} \leq  ( V/\log \log X)^{3/2} \leq V \left( \frac{ \lceil V \rceil}{(\log \log X)^3} \right)^{1/2} \leq \frac {e}{5} V.
\end{split}
\end{align*}
  It follows from this and \eqref{M1bound3} that if $V \geq 10^{4+4k}(\log \log X)$, then
\begin{align*}
\begin{split}
\operatorname{meas}(X ; M_1)
\ll& X m^2 \Big (\frac {m\log \log X}{e V_1^2} \Big )^m e^{4V/5} \ll X V^2 \exp \left(-(2+4k) V \right).
\end{split}
\end{align*}

 One checks that for our choice of $A$, we have
\begin{align*}
\exp \left(-\frac{V}{20A}\log V\right) \leq
\left\{
 \begin{array}
  [c]{ll}
\exp \left(-\frac{V^2}{\log \log X} \right), \quad V \leq 10^{4+4k} \log \log X, \\ \\
 \exp\left(-(2+4k) V \right), \quad V > 10^{4+4k} \log \log X.
\end{array}
\right .
\end{align*}

  The assertion of the proposition now follows from \eqref{equ:bd-S-2}, \eqref{M1bound1} and \eqref{M1bound2}. \end{proof}

  Now, Proposition \ref{propNbound} allows us to establish the following weaker upper bounds for moments of the $L$-functions under our consideration.

\begin{prop}
\label{prop: upperbound}
Assume RH for $\zeta(s)$ and GRH for $L(s, \chi)$ for all primitive cubic and quartic Dirichlet characters.  Let $k$ be a positive integer and $\varepsilon>0$ be given.  We have, for large $X$,
\begin{align*}
    \sum_{\substack{(q,3)=1 \\ X/2< q \leq X}}\;  \sumstar_{\substack{\chi \shortmod{q} \\ \chi^3 = \chi_0}} \left| L \left( \frac{1}{2}, \chi \right) \right|^{2k}  \ll_k  X(\log X)^{k^2+\varepsilon} \quad \mbox{and} \quad
    \sum_{\substack{(q,2)=1 \\ X/2< q \leq X}}\;  \sumstar_{\substack{\chi \shortmod{q} \\ \chi^4 = \chi_0}} \left| L \left( \frac{1}{2}, \chi \right) \right|^{2k}  \ll_k  X(\log X)^{k^2+\varepsilon}.
\end{align*}
\end{prop}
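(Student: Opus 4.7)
The plan is to deduce the moment bound from the large deviation estimate in Proposition~\ref{propNbound} via the standard identity
\begin{align*}
\sumstar_{\substack{\chi, q \\ X/2 < q \leq X}} \left|L\left(\hhalf, \chi\right)\right|^{2k} = 2k \int_{-\infty}^{\infty} e^{2kV} \mathcal{N}_i(V, X)\, \dif V,
\end{align*}
where $i=3$ or $i=4$. Since both cases are handled identically using their respective bounds from Proposition~\ref{propNbound}, I treat them uniformly.

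The integral is split according to three ranges of $V$. First, for $V \leq 10\sqrt{\log \log X}$, we use the trivial bound $\mathcal{N}_i(V,X) \ll X$ (the total count of primitive characters with conductor at most $X$), which contributes $\ll X \exp(20k\sqrt{\log \log X}) = X(\log X)^{o(1)}$. Second, in the central range $10\sqrt{\log \log X} \leq V \leq 10^{4+4k} \log \log X$, we apply the first bound of Proposition~\ref{propNbound}, giving integrand
\begin{align*}
e^{2kV} \cdot X (\log \log X) \exp\!\left(-\frac{V^2}{\log\log X}\left(1 - \frac{2\cdot 10^{6+4k}}{\log\log\log X}\right)\right).
\end{align*}
Completing the square in the exponent shows that this Gaussian-like integrand peaks near $V = k\log\log X \cdot (1 + o(1))$ with peak value $\asymp X (\log \log X) \exp(k^2 \log \log X \cdot (1+o(1))) = X(\log X)^{k^2 + o(1)}$; integrating over a window of width $O(\sqrt{\log \log X})$ yields a total contribution $\ll X (\log X)^{k^2+\varepsilon}$ for any fixed $\varepsilon > 0$ provided $X$ is large enough relative to $\varepsilon$. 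Third, for $V > 10^{4+4k}\log \log X$ (but up to $6 \log X/\log\log X$, beyond which $\mathcal{N}_i(V,X) = 0$ by GRH-based convexity), the second bound of Proposition~\ref{propNbound} gives integrand $X V^2 \exp((2k - (2+4k))V) = XV^2 \exp(-(2+2k)V)$, whose integral is $\ll X \exp(-(2+2k) \cdot 10^{4+4k}\log\log X) \ll X (\log X)^{-100}$, entirely negligible.

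Combining these three contributions yields the desired estimate. The main source of the $\varepsilon$ loss in the exponent is twofold: the factor $(1 - 2\cdot 10^{6+4k}/\log\log\log X)$ inside the Gaussian exponent (which widens the peak slightly and shifts its location by a factor $1 + o(1)$) and the $\log\log X$ factor in front, both of which are absorbed into $(\log X)^{\varepsilon}$. The only step requiring any real care is verifying that the peak-value computation is carried out correctly: one must track the perturbation in the exponent coming from the $(1 - o(1))$ factor to confirm that it inflates the peak by a quantity of size $\exp(o(\log\log X)) = (\log X)^{o(1)}$ rather than something larger, and similarly that the width of the integration region giving the bulk of the mass is $O(\sqrt{\log\log X} \cdot (1+o(1)))$. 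Since no further input beyond Proposition~\ref{propNbound} is needed and the computations are routine Gaussian estimates, no serious obstacle arises.
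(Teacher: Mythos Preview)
Your proposal is correct and follows essentially the same approach as the paper: write the moment as $2k\int e^{2kV}\mathcal{N}_i(V,X)\,\dif V$, use the trivial bound for $V\leq 10\sqrt{\log\log X}$, apply the two regimes of Proposition~\ref{propNbound} in the remaining range, and note that $\mathcal{N}_i(V,X)=0$ for $V>6\log X/\log\log X$. The only minor remark is that the vanishing for large $V$ is obtained not from ``GRH-based convexity'' but by taking $x=\log X$ in \eqref{equ:3.3} and bounding the prime sum trivially; otherwise your computation matches the paper's.
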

\begin{proof}
  As the proofs are similar, we again consider only the case for cubic characters here. We write $\mathcal{N}(V,X)$ for $\mathcal{N}_3(V,X)$
 and note that
\begin{align}
\label{momentint}
  \sum_{\substack{(q,3)=1 \\ X/2 < q \leq X}} \ \sumstar_{\substack{\chi \shortmod{q} \\ \chi^3 = \chi_0}} \left| L \left( \frac{1}{2}, \chi \right) \right|^{2k}  =& -\int\limits_{-\infty}^{+\infty}\exp (2kV) \dif \mathcal{N}(V,X)
  = 2k\int\limits_{-\infty}^{+\infty}\operatorname{exp}(2kV)\mathcal{N}(V,X) \dif V,
\end{align}
after integration by parts.  As $N(V, X) \ll X$, we see that
\begin{align*}
 2k\int\limits_{-\infty}^{10\sqrt{\log \log X}}\operatorname{exp}(2kV)\mathcal{N}(V,X) \dif V \ll X\int\limits_{-\infty}^{10\sqrt{\log \log X}}\operatorname{exp}(2kV) \dif V \ll X(\log X)^{k^2}.
\end{align*}
Thus it now remains to consider the $V$-range with $10\sqrt{\log \log X} \leq V$.  By taking $x = \log X$ in (\ref{equ:3.3}) and bounding the sum  over $p$ in (\ref{equ:3.3}) trivially, we see that $\mathcal{N} (V,X) =0$  for $V > 6\log X/\log \log X$. Thus, we can also assume that $V \leq 6\log X/\log \log X$. \newline

 We then apply Proposition \ref{propNbound} to see that for $10\sqrt{\log \log X} \leq V \leq 6\log X/\log \log X$,
\begin{align}
 \label{equ:rough-01}
  \mathcal{N}(V, X)\ll
\begin{cases}
  X(\operatorname{log}X)^{o(1)}\operatorname{exp}\left(-\frac{V^2}{\log \log X}\right), \quad 10\sqrt{\log \log X} \leq V \leq 10^{4+4k} \log \log X, \\ \\
   X(\operatorname{log}X)^{o(1)}\operatorname{exp}(-(2+4k)V), \quad  V > 10^{4+4k} \log \log X.
\end{cases}
\end{align}
  Applying the bounds given in \eqref{equ:rough-01} to evaluate the integral in \eqref{momentint} now leads to the assertion of Proposition~\ref{prop: upperbound}. \end{proof}

\subsection{Completion of the proof}
\label{sect 3.4}

  Upon dividing $q$ into dyadic blocks, we may assume that $X/2 < q \leq X$.
  Once again we only consider the case of cubic Dirichlet $L$-functions here.
   We start by taking exponentials on both sides of the upper bound for $\log  |L(\tfrac{1}{2}, \chi)|$ given in \eqref{equ:3.3'}.  This gives
\begin{align}
\label{basicest}
\begin{split}
 & \left| L \left( \frac{1}{2}, \chi \right) \right|^{2k}  \ll
 \exp \left(2k \Re \left( \sum_{\substack{  p \leq x }} \frac{\chi (p)}{p^{1/2+1/\log x}}
 \frac{\log (x/p)}{\log x} +
 \sum_{\substack{  p \leq \min (x^{1/2}, \log X) }} \frac{\chi (p^2)}{p^{1+2/\log x}}  \frac{\log (x/p^2)}{\log x}
 +\frac{\log X}{\log x}\right) \right ).
\end{split}
 \end{align}
   We would like to estimate the sums on the right-hand side of \eqref{basicest} using an approach similar to that in the proof of Theorem \ref{thmlowerbound}, by dividing the sums into different ranges of $p$.  But the situation is slightly different here, due to the presence of the parameter $x$.  For this reason, we follow the approach by A. J. Harper in \cite{Harper} and define for a large number $T$,
$$ \alpha_{0} = \frac{\log 2}{\log X}, \;\;\;\;\; \alpha_{i} = \frac{20^{i-1}}{(\log\log X)^{2}} \;\;\; \forall \; i \geq 1, \quad
\mathcal{J} = \mathcal{J}_{k,X} = 1 + \max\{i : \alpha_{i} \leq 10^{-T} \} . $$

   We set
\[ {\mathcal M}_{i,j}(\chi) = \sum_{X^{\alpha_{i-1}} < p \leq X^{\alpha_{i}}}  \frac{\chi (p)}{p^{1/2+1/(\log X^{\alpha_{j}})}} \frac{\log (X^{\alpha_{j}}/p)}{\log X^{\alpha_{j}}}, \quad 1\leq i \leq j \leq \mathcal{J} , \]
and
\[ P_{m}(\chi )= \sum_{2^{m} < p \leq 2^{m+1}} \frac{\chi (p)}{p^{1+2/(\log X^{\alpha_{j}})}} \frac{\log (X^{\alpha_{j}}/p^2)}{\log X^{\alpha_{j}}}, \quad  0 \leq m \leq \frac{\log\log X}{\log 2}. \]

Let $\mathcal{C}(X)$ be the set of primitive cubic Dirichlet characters of conductor $q$ with $X/2 < q \leq X$.  We also define for $1\leq j \leq \mathcal{J}$,
\begin{align*}
 \mathcal{S}(j) =& \left\{ \chi \in \mathcal{C}(X) : | \Re {\mathcal M}_{i,l}(\chi)| \leq \alpha_{i}^{-3/4} \; \; \mbox{for all}  \; 1 \leq i \leq j, \; \mbox{and} \; i \leq l \leq \mathcal{J}, \right. \\
 & \left. \;\;\;\;\; \text{but }  |\Re {\mathcal M}_{j+1,l}(\chi)| > \alpha_{j+1}^{-3/4} \; \text{ for some } j+1 \leq l \leq \mathcal{J} \right\} . \\
 \mathcal{S}(\mathcal{J}) =& \left\{ \chi \in \mathcal{C}(X) : |\Re{\mathcal M}_{i, \mathcal{J}}(\chi)| \leq \alpha_{i}^{-3/4} \; \mbox{for all}  \; 1 \leq i \leq \mathcal{J} \right\}, \; \mbox{and} \\
\mathcal{P}(m) =&  \left\{ \chi \in \mathcal{C}(X) : |\Re P_{m}(\chi)| > 2^{-m/10} , \; \text{but} \; |\Re P_{n}(\chi)| \leq 2^{-n/10} \; \mbox{for all} \; m+1 \leq n \leq \frac{\log\log X}{\log 2} \right\}.
\end{align*}

    We shall set $x=X^{\alpha_j}$ for $j \geq 1$ in \eqref{basicest} in what follows.  So we may assume that the second summation on the right side of \eqref{basicest} is over $p \leq \log X$.  Then we have
$|\Re P_{n}(\chi)| \leq 2^{-n/10}$ for all $n$ if $\chi \not \in \mathcal{P}(m)$ for any $m$, which implies that
\[ \Re \sum_{\substack{  p \leq \log X }} \frac{\chi (p)}{p^{1+2/\log x}}  \frac{\log (x/p^2)}{\log x}  = O(1). \]
 As the treatment for case $\chi \not \in \mathcal{P}(m)$ for any $m$ is easier compared to the other cases, we may assume that $\chi \in \mathcal{P}(m)$ for some $m$.   We further note that
$$ \mathcal{P}(m)= \bigcup_{m=0}^{\log \log X/2}\bigcup_{j=0}^{ \mathcal{J}} \Big (\mathcal{S}(j)\bigcap \mathcal{P}(m) \Big ), $$
so that it suffices to show that
\begin{align}
\label{sumovermj}
  \sum_{m=0}^{\log \log X/2}\sum_{j=0}^{\mathcal{J}}\sum_{\chi \in \mathcal{S}(j)\bigcap \mathcal{P}(m)} |L(1/2, \chi)|^{2k}
   \ll X (\log X)^{k^{2}} .
\end{align}

     Let $W(t)$ be defined as in the proof
of Lemma~\ref{lem:2.5}, we have that
\begin{align*}
\text{meas}(\mathcal{P}(m)) \leq \sum_{\substack{(q,3)=1}} \ \sumstar_{\substack{\chi \shortmod{q} \\ \chi^3 = \chi_0}}
\Big (2^{m/10} |P_m(\chi)| \Big )^{2\lceil 2^{m/2}\rceil } W \left( \frac qX \right).
\end{align*}
We use the same approach as in the proof of Lemma \ref{lem:2.5} and the estimate \eqref{binombound} to bound the right-hand side of the above expression.  This gives, for $m \geq 10$,
\begin{align}
\label{Pmest}
\begin{split}
  \text{meas}(\mathcal{P}(m)) \ll & X \sum^{\lceil \lceil 2^{m/2}\rceil /3\rceil }_{i=0}
  \Big (\lceil 2^{m/2}\rceil /3\Big )^{4\lceil 2^{m/2}\rceil /3+1} \Big (\sum_{2^m < p } \frac {1}{p^2}\Big )^{\lceil 2^{m/2}\rceil -3i}
  \Big (\sum_{2^m < p } \frac {1}{p^3}\Big )^{2i} \\
   \ll & X 2^m (2^{2m/3})^{\lceil 2^{m/2}\rceil }\Big (\sum_{2^m < p } \frac {1}{p^2}\Big )^{\lceil 2^{m/2}\rceil } \ll X 2^m (2^{-m/3})^{2^{m/2}} \ll X  2^{-2^{m/2}}.
\end{split}
 \end{align}
  We then apply the Cauchy-Schwarz inequality and Proposition \ref{prop: upperbound} to see that when $2^{m} \geq (\log\log X)^{3}$,
\begin{align*}
\sum_{\chi \in  \mathcal{P}(m)} \left| L \left( \frac{1}{2}, \chi \right) \right|^{2k} \leq & \left( \text{meas}(\mathcal{P}(m)) \cdot
\sum_{\substack{(q,3)=1 \\ X/2<q \leq X}}  \sumstar_{\substack{\chi \shortmod{q} \\ \chi^3 = \chi_0}}|L(1/2, \chi)|^{4k} \right)^{1/2}
 \\
 \ll & \left( X \exp\left( -(\log 2)(\log\log X)^{3/2} \right) X (\log X)^{(2k)^{2}+1} \right)^{1/2} \ll X(\log X)^{k^2}.
\end{align*}

   The above implies  that we may also assume that $0 \leq m \leq (3/\log 2)\log\log\log X$.
We further note that, for $W(t)$ defined as in the proof of Lemma~\ref{lem:2.5},
\begin{align}
\label{S0est}
\begin{split}
\text{meas}(\mathcal{S}(0)) \ll & \sum_{\substack{(q,3)=1 }} \ \sumstar_{\substack{\chi \shortmod{q} \\ \chi^3 = \chi_0}}
\sum^{\mathcal{J}}_{l=1}
\Big ( \alpha^{3/4}_{1}{|\mathcal
M}_{1, l}(\chi)| \Big)^{2\lceil 1/(10\alpha_{1})\rceil }W \left( \frac qX \right) \\
=&
\sum^{\mathcal{J}}_{l=1}\sum_{\substack{(q,3)=1 }} \ \sumstar_{\substack{\chi \shortmod{q} \\ \chi^3 = \chi_0}}\Big ( \alpha^{3/4}_{1}{|\mathcal
M}_{1, l}(\chi)| \Big)^{2\lceil 1/(10\alpha_{1})\rceil }W \left( \frac qX \right) .
\end{split}
\end{align}
   Note that we have
\begin{align}
\label{sump1}
 \mathcal{J} \leq \log\log\log X , \; \alpha_{1} = \frac{1}{(\log\log X)^{2}} , \; \mbox{and} \; \sum_{p \leq X^{1/(\log\log X)^{2}}} \frac{1}{p} \leq \log\log X ,
\end{align}
  where the last estimation follows from Lemma \ref{RS}. We apply these estimates to evaluate the last sums in \eqref{S0est} above in a manner similar to the approach in the proof of Theorem \ref{thmlowerbound}.  This yields
\begin{align*}
\text{meas}(\mathcal{S}(0)) \ll &
\mathcal{J}X e^{-1/\alpha_{1}}\ll X e^{-(\log\log X)^{2}/10}  .
\end{align*}
  We then deduce via the Cauchy-Schwarz inequality and Proposition \ref{prop: upperbound}  that
\begin{align*}
\sum_{\chi \in  \mathcal{S}(0)} \left| L \left( \frac{1}{2}, \chi \right) \right|^{2k}  \leq &   \left( \text{meas}(\mathcal{S}(0)) \cdot
\sum_{\substack{(q,3)=1 \\ X/2<q \leq X}}  \sumstar_{\substack{\chi \shortmod{q} \\ \chi^3 = \chi_0}}|L(\half, \chi)|^{4k} \right)^{1/2}
 \\
 \ll & \left( X \exp\left( -(\log\log X)^{2}/10 \right) X (\log X)^{(2k)^{2}+1} \right)^{1/2} \ll X(\log X)^{k^2}.
\end{align*}

  Thus we may further assume that $j \geq 1$. Note that when $\chi \in \mathcal{S}(j)$, we set $x=X^{\alpha_j}$ in \eqref{basicest} to arrive at
\begin{align*}
\begin{split}
 & \left| L \left( \frac{1}{2}, \chi \right) \right|^{2k} \ll \exp \left(\frac {2k}{\alpha_j} \right) \exp \Big (
 2k\Re\sum^j_{i=0}{\mathcal M}_{i,j}(\chi)+2k \Re\sum^{\log \log X/2}_{m=0}P_m(\chi) \Big ).
\end{split}
 \end{align*}

   When restricting the sum of $|L(1/2, \chi)|^{2k}$ over $\mathcal{S}(j)\bigcap \mathcal{P}(m)$, our treatments below require us to separate the sums over $p \leq 2^{m+1}$ on the right-hand side of hte above expression from those over $p>2^{m+1}$. For this, we note that if $\chi \in \mathcal{P}(m)$, then
\begin{align}
\label{sump}
\begin{split}
 & \Re \sum_{  p \leq 2^{m+1}}  \frac{\chi (p)}{p^{1/2+1/(\log X^{\alpha_{j}})}}  \frac{\log (X^{\alpha_{j}}/p)}{\log X^{\alpha_{j}}}+
  \Re \sum_{p \leq \log X} \frac{\chi (p)}{p^{1+2/(\log X^{\alpha_{j}})}}  \frac{\log (X^{\alpha_{j}}/p^2)}{\log X^{\alpha_{j}}}
   \\
 \leq &\Re \sum_{  p \leq 2^{m+1}}  \frac{\chi (p)}{p^{1/2+1/(\log X^{\alpha_{j}})}}  \frac{\log (X^{\alpha_{j}}/p)}{\log X^{\alpha_{j}}}+
 \Re \sum_{p \leq 2^{m+1}} \frac{\chi (p)}{p^{1+2/(\log X^{\alpha_{j}})}}  \frac{\log (X^{\alpha_{j}}/p^2)}{\log X^{\alpha_{j}}}+O(1) \leq 2^{m/2+3}+O(1).
\end{split}
 \end{align}

Then it follows from the above that
\begin{align}
\label{LboundinSP}
\begin{split}
  \sum_{\chi \in \mathcal{S}(j)\bigcap \mathcal{P}(m)} & \left| L \left( \frac{1}{2}, \chi \right) \right|^{2k} \\
   \ll & e^{k2^{m/2+4}} \sum_{\chi \in \mathcal{S}(j)\bigcap \mathcal{P}(m)}
 \exp \left(\frac {2k}{\alpha_j} \right)\exp \Big ( 2k \Re \sum_{ 2^{m+1}< p \leq X^{\alpha_j} }
 \frac{\chi (p)}{p^{\tfrac{1}{2}+1/(\log X^{\alpha_{j}})}}  \frac{\log (X^{\alpha_{j}}/p)}{\log X^{\alpha_{j}}}\Big )
  \\
\ll &  e^{k2^{m/2+4}} \exp \left(\frac {2k}{\alpha_j} \right)\sum_{\chi \in \mathcal{S}(j)} \Big (2^{m/10}|P_m(\chi)| \Big )^{2\lceil 2^{m/2}\rceil }
\exp \Big ( 2k \Re{\mathcal M}'_{1,j}(\chi)+2k\Re \sum^j_{i=2}{\mathcal M}_{i,j}(\chi)\Big ),
\end{split}
 \end{align}
   where we define
\begin{align*}
\begin{split}
 {\mathcal M}'_{1,j}(\chi)= \sum_{ 2^{m+1}< p \leq X^{\alpha_1} }
 \frac{\chi (p)}{p^{1/2+1/\log X^{\alpha_{j}}}}  \frac{\log (X^{\alpha_{j}}/p)}{\log X^{\alpha_{j}}}.
\end{split}
 \end{align*}

    We note that if $0 \leq m \leq (3/\log 2)\log\log\log X$ and $X$ large enough, then
\begin{align*}
\begin{split}
  \sum_{ p< 2^{m+1}  }
 \frac{\chi (p)}{p^{1/2+1/\log X^{\alpha_{j}}}}  \frac{\log (X^{\alpha_{j}}/p)}{\log X^{\alpha_{j}}} \leq \sum_{ p< 2^{m+1}  }
 \frac{1}{\sqrt{p}} \leq \frac {100 \cdot 2^{m/2}}{m+1} \leq 100(\log \log X)^{3/2}(\log \log \log X)^{-1},
\end{split}
 \end{align*}
   where the last estimation above follows from partial summation and \eqref{PIT}. \newline

  It follows from this that if $\chi \in \mathcal{S}(0)$ and $X$ large enough,
\begin{align}
\label{M'est}
\begin{split}
 {\mathcal M}'_{1,j}(\chi)\leq 100(\log \log X)^{3/2}(\log \log \log X)^{-1}+{\mathcal M}_{1,j}(\chi) \leq 1.01\alpha^{-3/4}_1=1.01(\log \log X)^{3/2}.
\end{split}
 \end{align}

  As we also have ${\mathcal M}_{i, j} \leq  \alpha^{-3/4}_i$ if $\chi \in \mathcal{S}(0)$, we can apply \cite[Lemma 5.2]{Kirila}.  This gives
\begin{align*}
\begin{split}
\exp \Big ( 2k\Re {\mathcal M}'_{1,j}(\chi)+2k \Re\sum^j_{i=2}{\mathcal M}_{i,j}(\chi)\Big ) \ll \Big | E_{e^2k\alpha^{-3/4}_1}(k{\mathcal M}'_{1,j}(\chi)) \Big |^2
\prod^j_{i=2}\Big | E_{e^2k\alpha^{-3/4}_i}(k{\mathcal M}_{i,j}(\chi)) \Big |^2,
\end{split}
 \end{align*}
  where $E_{e^2k\alpha^{-3/4}_i}$ is defined in \eqref{E}. \newline

   We then deduce from the description on $\mathcal{S}(j)$ that when $j \geq 1$,
\begin{align}
\label{L2k}
\begin{split}
  \sum_{\chi \in \mathcal{S}(j)\bigcap \mathcal{P}(m)} & \left| L \left( \frac{1}{2}, \chi \right) \right|^{2k}  \\
 \ll & e^{k2^{m/2+4}}\exp \left( \frac {2k}{\alpha_j} \right)
 \sum^{R}_{l=j+1} \sum_{\substack{(q,3)=1 \\ X/2<q \leq X}} \ \sumstar_{\substack{\chi \shortmod{q} \\ \chi^3 = \chi_0}}\Big (2^{m/10}|P_m(\chi)| \Big )
 ^{2\lceil 2^{m/2}\rceil } \\
& \hspace*{2cm} \times \exp \Big ( 2k \Re {\mathcal M}'_{1,j}(\chi)+2k \Re \sum^j_{i=2}{\mathcal M}_{i,j}(\chi)\Big )\Big ( \alpha^{3/4}_{j+1}{\mathcal
M}_{j+1, l}(\chi)\Big)^{2\lceil 1/(10\alpha_{j+1})\rceil } \\
\ll &  e^{k2^{m/2+4}}\exp \left(\frac {2k}{\alpha_j} \right) \sum^{R}_{l=j+1}
\sum_{\substack{(q,3)=1 \\ X/2<q \leq X}} \ \sumstar_{\substack{\chi \shortmod{q} \\ \chi^3 = \chi_0}}
\Big (2^{m/10}|P_m(\chi)| \Big )^{2\lceil 2^{m/2}\rceil } \\
& \hspace*{2cm} \times \Big | E_{e^2k\alpha^{-3/4}_1}(k{\mathcal M}'_{1,j}(\chi)) \Big |^2
\prod^j_{i=2}\Big | E_{e^2k\alpha^{-3/4}_i}(k{\mathcal M}_{i,j}(\chi)) \Big |^2\Big ( \alpha^{3/4}_{j+1}{\mathcal
M}_{j+1, l}(\chi)\Big)^{2\lceil 1/(10\alpha_{j+1})\rceil } .
\end{split}
 \end{align}

 Note that we have for $1 \leq j \leq \mathcal{I}-1$,
\begin{align}
\label{sumpj}
\mathcal{I}-j \leq \frac{\log(1/\alpha_{j})}{\log 20}  \; \mbox{and} \; \sum_{X^{\alpha_{j}} < p \leq X^{\alpha_{j+1}}} \frac{1}{p}
 = \log \alpha_{j+1} - \log \alpha_{j} + o(1) = \log 20 + o(1) \leq 10 .
\end{align}
Therefore we argue in a manner similar to the proof of Theorem \ref{thmlowerbound} and make use of \eqref{Pmest}.  Upon taking $T$ large enough,
\begin{align*}
\begin{split}
  \sum^{ \mathcal{I}}_{l=j+1}
\sum_{\substack{(q,3)=1 \\ X/2<q \leq X}} & \ \sumstar_{\substack{\chi \shortmod{q} \\ \chi^3 = \chi_0}}
\Big (2^{m/10}|P_m(\chi)| \Big )^{2\lceil 2^{m/2}\rceil } \\
& \hspace*{1cm} \times \Big | E_{e^2k\alpha^{-3/4}_1}(k{\mathcal M}'_{1,j}(\chi)) \Big |^2
\prod^j_{i=2}\Big | E_{e^2k\alpha^{-3/4}_i}(k{\mathcal M}_{i,j}(\chi)) \Big |^2\Big ( \alpha^{3/4}_{j+1}{\mathcal
M}_{j+1, l}(\chi)\Big)^{2\lceil 1/(10\alpha_{j+1})\rceil } \\
\ll & X(\mathcal{I}-j)e^{-44k/\alpha_{j+1}} 2^m (2^{-2m/15})^{\lceil 2^{m/2}\rceil } \prod_{p \leq X^{\alpha_j}}\left( 1+\frac {k^2}{p}+O \left( \frac 1{p^2} \right) \right) \\
\ll  &  e^{-42k/\alpha_{j+1}} 2^m (2^{-2m/15})^{\lceil 2^{m/2}\rceil }X (\log X)^{k^{2}}.
\end{split}
 \end{align*}

   We then conclude from the above and \eqref{LboundinSP} that (by noting that $20/\alpha_{j+1}=1/\alpha_j$)
\begin{align*}
\begin{split}
 & \sum_{\chi \in \mathcal{S}(j)\bigcap \mathcal{P}(m)} |L(1/2, \chi)|^{2k}
\ll   e^{-k/(10\alpha_{j})}2^m  e^{k2^{m/2+4}} (2^{-2m/15})^{\lceil 2^{m/2}\rceil }X (\log X)^{k^{2}}.
\end{split}
 \end{align*}

   As the sum of the right side expression over $m$ and $j$ converges, we see that the above implies \eqref{sumovermj}
and this completes the proof of Theorem \ref{thmupperbound}.

\section{Proof of Theorem \ref{thmlowerbound1}}
\label{Sec: Pf of Thmlowerbounds}

  As the proof is similar to that of Theorem \ref{thmupperbound}, we shall omit some details to avoid repetition.  Once again we only consider the cubic case in what follows.  We keep the notations in the proofs of Theorems \ref{thmlowerbound} and \ref{thmupperbound} and define
\begin{align*}
   {\mathcal P}'_i(\chi)=&  \sum_{X^{\alpha_{i-1}} < p \leq X^{\alpha_{i}}}  \frac{\chi (p)}{\sqrt{p}}, \quad \mbox{and} \quad {\mathcal Q}'_i(\chi, k)=\Big( \frac{12 |{\mathcal P}'_i(\chi)|}{\lceil e^2k\alpha^{-3/4}_i \rceil}\Big)^{r'_k\lceil e^2k\alpha^{-3/4}_i \rceil},
\end{align*}
   where $r'_k=\lceil 1+1/k \rceil+1$.
  We also define for any real number $\alpha$ and any $1\leq i  \leq \mathcal{J}$,
\begin{align*}
 {\mathcal M}_i(\chi, \alpha) = E_{e^2k\alpha^{-3/4}_i} \Big (\alpha {\mathcal P}'_i(\chi) \Big ), \quad  {\mathcal M}(\chi, \alpha)=  \prod^{\mathcal{J}}_{i=1} {\mathcal M}_i(\chi, \alpha).
\end{align*}
  Note that each ${\mathcal M}_i(\chi, \alpha)$ is a short Dirichlet polynomial of length at most $X^{\alpha_{i}\cdot e^2k\alpha^{-3/4}_i}=X^{e^2k\alpha^{1/4}_i}$. By taking $X$ large enough, we have that
\begin{align*}
 \sum^{\mathcal{J}}_{i=1} e^2k\alpha^{1/4}_i \leq 2e^2k10^{-T/4}.
\end{align*}
   It follows that ${\mathcal M}(\chi, \alpha)$ is also a short Dirichlet polynomial of length at most $X^{2e^2k10^{-T/4}}$. \newline

   Note additionally that we have by \eqref{sump1} and \eqref{sumpj},
\begin{align*}
   \sum_{X^{\alpha_{i-1}} < p \leq X^{\alpha_{i}}}\frac 1{p} \leq \frac{100}{10^{3T/4}}\alpha^{-3/4}_i, \quad 1\leq i  \leq \mathcal{J}.
\end{align*}

  Instead of using products involving $\mathcal{N}$ in the lower bounds principle as given in Lemma \ref{lem1}, we apply the same principle to products with $\mathcal{M}$.  We do this for the case $0 \leq k < 1/2$ as follows.
\begin{lemma}
\label{lem1'}
 With the notations above, we have for $0 \leq k < 1/2$,
\begin{align}
\label{basiclowerbound}
\begin{split}
\sumstar_{\chi, q} L \left( \frac{1}{2}, \chi \right) & \mathcal{M}(\chi, k-1) \mathcal{M}(\overline{\chi}, k)\Phi\leg{q}{X} \\
 \ll & \Big ( \sumstar_{\chi, q} \left| L \left( \frac{1}{2}, \chi \right) \right|^{2k}   \Phi\leg{q}{X} \Big )^{1/2}\Big ( \sumstar_{\chi, q} \left| L \left( \frac{1}{2}, \chi \right) \right|^2 |\mathcal{M}(\chi, k-1)|^2 \Phi\leg{q}{X}  \Big)^{(1-k)/2} \\
 & \hspace*{2cm} \times \Big ( \sumstar_{\chi, q} \prod^{\mathcal{J}}_{i=1} \big ( |{\mathcal M}_i(\chi, k)|^2+ |{\mathcal Q}'_i(\chi,k)|^2 \big )\Phi\leg{q}{X}
 \Big)^{k/2}.
\end{split}
\end{align}
  The implied constant in \eqref{basiclowerbound} depends on $k$ only.
\end{lemma}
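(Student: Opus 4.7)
My plan is to derive the three-term inequality via Hölder's inequality followed by a factor-wise estimate comparing short Dirichlet polynomials. I begin by writing
\[ \bigl|L(1/2,\chi)\mathcal{M}(\chi,k-1)\mathcal{M}(\overline{\chi},k)\bigr| = |L(1/2,\chi)|^k \cdot |L(1/2,\chi)|^{1-k}|\mathcal{M}(\chi,k-1)|^{1-k} \cdot |\mathcal{M}(\chi,k-1)|^k|\mathcal{M}(\overline{\chi},k)| \]
and applying Hölder's inequality with weight $\Phi(q/X)$ and exponents $2$, $2/(1-k)$, $2/k$, which satisfy $1/2+(1-k)/2+k/2=1$ for $0\le k<1/2$. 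The first two Hölder factors reproduce $\bigl(\sumstar_{\chi,q}|L(1/2,\chi)|^{2k}\Phi(q/X)\bigr)^{1/2}$ and $\bigl(\sumstar_{\chi,q}|L(1/2,\chi)|^2|\mathcal{M}(\chi,k-1)|^2\Phi(q/X)\bigr)^{(1-k)/2}$, matching the first two factors in the lemma.

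The third Hölder factor is $\bigl(\sumstar_{\chi,q}|\mathcal{M}(\chi,k-1)|^2|\mathcal{M}(\overline{\chi},k)|^{2/k}\Phi(q/X)\bigr)^{k/2}$. Since $\mathcal{M}(\chi,\alpha)=\prod_{i=1}^{\mathcal{J}}\mathcal{M}_i(\chi,\alpha)$ factorizes over the blocks, it suffices to establish, for each $1\le i\le \mathcal{J}$, the pointwise estimate
\[ |\mathcal{M}_i(\chi,k-1)|^2|\mathcal{M}_i(\overline{\chi},k)|^{2/k} \ll |\mathcal{M}_i(\chi,k)|^2 + |\mathcal{Q}'_i(\chi,k)|^2, \]
with implicit constants whose product over $i\le\mathcal{J}$ remains bounded; multiplying these factor-wise estimates together produces the bound claimed in the lemma.

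To prove this pointwise bound, I would split on the size of $|\mathcal{P}'_i(\chi)|$ relative to the cut-off $\ell_i := \lceil e^2k\alpha_i^{-3/4}\rceil$. In the small regime, $\max(k,1-k)|\mathcal{P}'_i(\chi)|\le \ell_i/e^2$, and the standard estimate $E_{\ell_i}(x)=e^x(1+O(e^{-\ell_i}))$ valid for $|x|\le \ell_i/e^2$ yields $\mathcal{M}_i(\chi,\alpha)=e^{\alpha\mathcal{P}'_i(\chi)}(1+O(e^{-\ell_i}))$ for $\alpha\in\{k-1,k\}$. Using $\Re\overline{\mathcal{P}'_i(\chi)}=\Re\mathcal{P}'_i(\chi)$, both sides of the target inequality equal $e^{2k\Re\mathcal{P}'_i(\chi)}$ up to factors of the form $1+O(e^{-\ell_i})$, whose product over $i$ converges by the rapid growth of $\ell_i$. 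In the complementary large regime, the crude bound $|\mathcal{M}_i(\chi,\alpha)|\le E_{\ell_i}(|\alpha\mathcal{P}'_i|)\ll(e|\alpha\mathcal{P}'_i|/\ell_i)^{\ell_i}$ shows the left side grows at worst like $|\mathcal{P}'_i|^{2(1+1/k)\ell_i}$, while $|\mathcal{Q}'_i(\chi,k)|^2=(12|\mathcal{P}'_i|/\ell_i)^{2r'_k\ell_i}$ grows like $|\mathcal{P}'_i|^{2r'_k\ell_i}$. The choice $r'_k=\lceil 1+1/k\rceil+1>1+1/k$ then guarantees that $|\mathcal{Q}'_i(\chi,k)|^2$ dominates, with sufficient margin to swallow constants.

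The principal obstacle is a careful management of the transition region between the two regimes together with the constants accumulated across the $\mathcal{J}$ factors. The boundary on $|\mathcal{P}'_i(\chi)|$ must be chosen large enough that the Taylor approximation of $E_{\ell_i}$ is effective on both $(k-1)\mathcal{P}'_i$ and $k\mathcal{P}'_i$, yet small enough that the polynomial bound in the large regime is actually dominated by $|\mathcal{Q}'_i(\chi,k)|^2$ with a margin that absorbs the implicit constants; combined with the requirement that small multiplicative slack in the small regime compound to at most a bounded factor across all $i\le\mathcal{J}$, this is feasible thanks to the very rapid geometric growth of $\alpha_i^{-1}$ built into the partition of $[\alpha_0,1]$ preceding the lemma.
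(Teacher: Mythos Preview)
Your proposal is correct and follows essentially the same route as the paper: apply H\"older with exponents $2$, $2/(1-k)$, $2/k$, then bound the third factor by a factor-wise pointwise estimate obtained by splitting on whether $|\mathcal{P}'_i(\chi)|$ is below or above the threshold $\ell_i/10$ (the paper's choice), using the Taylor remainder bound \eqref{Ebound} in the small case and the crude polynomial bound \eqref{MestPlarge} in the large case. One small caution: the bound you write as $E_{\ell_i}(|\alpha\mathcal{P}'_i|)\ll(e|\alpha\mathcal{P}'_i|/\ell_i)^{\ell_i}$ is not literally true near the threshold (at $|\mathcal{P}'_i|\sim\ell_i/10$ the right side tends to $0$ while the left side is $\ge 1$); the paper instead obtains $(12|\mathcal{P}'_i|/\ell_i)^{\ell_i}$ via the maneuver in \eqref{MestPlarge}, and it is precisely the freedom in the base constant $12$ matched against the $12$ in the definition of $\mathcal{Q}'_i$ that makes the large-regime comparison go through cleanly.
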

\begin{proof}
  The proof is similar to that of \cite[Lemma 3.1]{Gao2021-4}.  We first use H\"older's inequality to bound the left side of \eqref{basiclowerbound} as
\begin{align}
\label{basicbound0}
\begin{split}
 \leq & \Big ( \sumstar_{\chi, q} \Big| L \left( \frac{1}{2}, \chi \right) \Big|^{2k} \Big )^{1/2}\Big ( \sumstar_{\chi, q} \Big| L \Big( \frac{1}{2}, \chi \Big) \mathcal{M}(\chi, k-1)|^2   \Big)^{(1-k)/2}\Big ( \sumstar_{\chi, q} |\mathcal{M}(\chi,
 k)|^{2/k}|\mathcal{M}(\chi, k-1)|^{2}  \Big)^{k/2}.
\end{split}
\end{align}
  As in the proof of \cite[Lemma 3.1]{Gao2021-4}, we note for $|z| \le aK/10$ with $0<a \leq 1$,
\begin{align}
\label{Ebound}
\Big| \sum_{r=0}^K \frac{z^r}{r!} - e^z \Big| \le \frac{|z|^{K}}{K!} \le \Big(\frac{a e}{10}\Big)^{K},
\end{align}
  We apply \eqref{Ebound} with $z=k{\mathcal P}'_i(\chi)$, $K=e^2k\alpha^{-3/4}_i$ and $a=k$, getting that if $|{\mathcal P}'_i(\chi)| \le \lceil e^2k\alpha^{-3/4}_i \rceil/10$, then
\begin{align*}
{\mathcal M}_i(\chi, k)=& \exp ( k{\mathcal P}'_i(\chi) )( 1+  O \left( \exp ( k |{\mathcal P}'_i(\chi)| ) \left( \frac{k e}{10} \right)^{e^2k\alpha^{-3/4}_i} \right) \\
= & \exp ( k {\mathcal P}'_i(\chi)  ) \left( 1+  O\left( ke^{-e^2k\alpha^{-3/4}_i} \right) \right).
\end{align*}
  Similarly, we have
\begin{align}
\label{MiP}
{\mathcal M}_i(\chi, k-1)= & \exp \left( (k-1) {\mathcal P}'_i(\chi) \right) \left( 1+  O\left( e^{-e^2k\alpha^{-3/4}_i} \right)  \right).
\end{align}

   The above estimates yield that if $|{\mathcal P}'_i(\chi)| \le \lceil e^2k\alpha^{-3/4}_i \rceil/10$, then
\begin{align}
\label{est1}
\begin{split}
|{\mathcal M}_i(\chi, k)^{\frac {1}{k}} {\mathcal M}_i(\chi, k-1)|^{2}
=& \exp ( 2k \Re  {\mathcal P}'_i(\chi)  ) \left( 1+ O\big( e^{-e^2k\alpha^{-3/4}_i} \big) \right) \\
=& |{\mathcal M}_j(\chi, k)|^2 \left( 1+ O\big(e^{-e^2k\alpha^{-3/4}_i} \big)
\right) .
\end{split}
\end{align}

  On the other hand, if $| {\mathcal P}'_i(\chi)  \ge \lceil e^2k\alpha^{-3/4}_i \rceil/10$,
\begin{equation} \label{MestPlarge}
\begin{split}
|{\mathcal M}_i(\chi, k)| \le \sum_{r=0}^{\lceil e^2k\alpha^{-3/4}_i \rceil} \frac{|{\mathcal P}'_i(\chi)|^r}{r!} & \le
|{\mathcal P}'_i(\chi)|^{\lceil e^2k\alpha^{-3/4}_i \rceil} \sum_{r=0}^{\lceil e^2k\alpha^{-3/4}_i \rceil} \Big( \frac{10}{\lceil e^2k\alpha^{-3/4}_i \rceil}\Big)^{\lceil e^2k\alpha^{-3/4}_i \rceil-r} \frac{1}{r!}  \\
&   \le \Big( \frac{12 |{\mathcal
P}'_i(\chi)|}{\lceil e^2k\alpha^{-3/4}_i \rceil}\Big)^{\lceil e^2k\alpha^{-3/4}_i \rceil} .
\end{split}
\end{equation}
  Observe that the same bound above also holds for $|{\mathcal M}_i(\chi, k-1)|$. It follows from these estimations that if $|{\mathcal
  P}'_i(\chi)| \ge \lceil e^2k\alpha^{-3/4}_i \rceil/10$, we have
\begin{align*}
|{\mathcal M}_i(\chi, k)^{\frac {1}{k}} {\mathcal M}_i(\chi, k-1)|^{2}
& \leq \Big( \frac{12 |{\mathcal P}'_i(\chi)|}{\lceil e^2k\alpha^{-3/4}_i \rceil}\Big)^{2(1+1/k)\lceil e^2k\alpha^{-3/4}_i \rceil} \leq  |{\mathcal Q}'_i(\chi, k)|^2.
\end{align*}
 Applying the above and \eqref{est1} to \eqref{basicbound0}, the assertion of the lemma follows.
\end{proof}

  We deduce from Lemma \ref{lem1'} that in order to prove Theorem \ref{thmlowerbound1}, it suffices to show that
\begin{align}
\label{LMM} \sumstar_{\chi, q}L \left( \frac{1}{2},\chi \right) \mathcal{M}(\overline{\chi}, k) \mathcal{M}(\chi, k-1)\Phi\leg{q}{X}  \gg & X(\log X)^{ k^2
},  \\
\label{MQ} \sumstar_{\chi, q} \prod^{\mathcal{J}}_{i=1} \big ( |{\mathcal M}_i(\chi, k)|^2+ |{\mathcal Q}'_i(\chi,k)|^2 \big ) \Phi\leg{q}{X} \ll & X(\log X)^{ k^2
}, \\
\label{LM} \sumstar_{\chi, q} \left| L \left( \frac{1}{2},\chi \right) \right|^2 |\mathcal{M}(\chi, k-1)|^2 \Phi\leg{q}{X} \ll & X(\log X)^{ k^2
}.
\end{align}

  The estimates in \eqref{LMM} and \eqref{MQ} can be established similar to Proposition \ref{Prop4} and Proposition \ref{Prop6}, respectively. To prove \eqref{LM}, we argue in a manner similar to the treatments in Section \ref{sect 3.4} and conclude that it suffices to show that
\begin{align}
\label{sumovermj1}
  \sum_{m=0}^{(3/\log 2)\log\log\log X}\sum_{j=1}^{\mathcal{J}}\sum_{\chi \in \mathcal{S}(j)\bigcap \mathcal{P}(m)} |L(\tfrac{1}{2},\chi)|^2 |\mathcal{M}(\chi, k-1)|^2
   \ll X (\log X)^{k^{2}} .
\end{align}

   Similar to \eqref{L2k}, we get that
\begin{align}
\label{LMbound}
\begin{split}
 \sum_{\chi \in \mathcal{S}(j)\bigcap \mathcal{P}(m)} & |L(\tfrac{1}{2},\chi)|^2 |\mathcal{M}(\chi, k-1)|^2 \\
\ll & e^{k2^{m/2+4}}\exp \left(\frac {2k}{\alpha_j} \right) \sum^{R}_{l=j+1}
\sum_{\substack{(q,3)=1 \\ X/2<q \leq X}} \ \sumstar_{\substack{\chi \shortmod{q} \\ \chi^3 = \chi_0}}
\Big (2^{m/10}|P_m(\chi)| \Big )^{2\lceil 2^{m/2}\rceil } \\
& \hspace*{1.5cm} \times \Big | E_{e^2k\alpha^{-3/4}_1}(k{\mathcal M}'_{1,j}(\chi)) \Big |^2\Big | E_{e^2k\alpha^{-3/4}_1}((k-1){\mathcal P}'_{1}(\chi)) \Big |^2 \\
& \hspace*{1.5cm} \times \prod^j_{i=2}\Big | E_{e^2k\alpha^{-3/4}_i}(k{\mathcal M}_{i,j}(\chi)) \Big |^2 | E_{e^2k\alpha^{-3/4}_i}((k-1){\mathcal P}'_{i}(\chi)) \Big |^2 \Big ( \alpha^{3/4}_{j+1}{\mathcal
M}_{j+1, l}(\chi)\Big)^{2\lceil 1/(10\alpha_{j+1})\rceil } .
\end{split}
 \end{align}

 As in the proof of Theorem \ref{thmupperbound},  when treating the right-hand side of \eqref{LMbound}, we want to separate the terms with $p \leq 2^{m+1}$ and those with $p>2^{m+1}$.  For this, we deduce, similar to \eqref{MiP}, that, if $|{\mathcal P}'_i(\chi)| \le \lceil e^2k\alpha^{-3/4}_i \rceil/10$,
\begin{align}
\label{EPrimboundPsmall}
E_{e^2k\alpha^{-3/4}_1}((k-1){\mathcal P}'_{1}(\chi)) \ll \exp ( (k-1) {\mathcal P}'_i(\chi)  ).
\end{align}
Similar to \eqref{sump} and \eqref{M'est},
\begin{align} \label{psumestsmallandlarge}
 \sum_{p \leq 2^{m+1}}\frac{\chi (p)}{\sqrt{p}} \ll 2^{m/2}, \quad \mbox{and} \quad \sum_{2^{m+1}<p \leq X^{\alpha_1}}\frac{\chi (p)}{\sqrt{p}} \leq \alpha^{-3/4}_1.
\end{align}
  It follows from \cite[Lemma 5.2]{Kirila} that
\begin{align}
\label{Expbound}
 \exp ( (k-1) {\mathcal P}'_i(\chi)  ) \ll e^{(1-k)2^{m/2}} E_{e^2k\alpha^{-3/4}_1}\Big((k-1)\sum_{2^{m+1}<p \leq X^{\alpha_1}}\frac{\chi (p)}{\sqrt{p}}\Big ).
\end{align}

   Combining \eqref{EPrimboundPsmall} and \eqref{Expbound}, we conclude that if $|{\mathcal
  P}'_i(\chi)| \le \lceil e^2k\alpha^{-3/4}_i \rceil/10$, then
\begin{align}
\label{EPrimboundPsmall1}
E_{e^2k\alpha^{-3/4}_1}((k-1){\mathcal P}'_{1}(\chi)) \ll e^{(1-k)2^{m/2}} E_{e^2k\alpha^{-3/4}_1}\Big((k-1)\sum_{2^{m+1}<p \leq X^{\alpha_1}}\frac{\chi (p)}{\sqrt{p}}\Big ).
\end{align}

  When $|{\mathcal
  P}'_i(\chi)| \ge \lceil e^2k\alpha^{-3/4}_i \rceil/10$, very much similar to \eqref{MestPlarge}, we arrive at
\begin{align}
\label{EPprimebound}
\begin{split}
E_{e^2k\alpha^{-3/4}_1}((k-1){\mathcal P}'_{1}(\chi)) &\le  \Big( \frac{12 |{\mathcal
P}'_1(\chi)|}{\lceil e^2k\alpha^{-3/4}_1 \rceil}\Big)^{\lceil e^2k\alpha^{-3/4}_1 \rceil} .
\end{split}
\end{align}

Now \eqref{psumestsmallandlarge} gives that if $m \leq (3/\log 2)\log\log\log X$ with $X$ large enough, then
\begin{align*}
\begin{split}
  \left| \sum_{p \leq 2^{m+1}}\frac{\chi (p)}{\sqrt{p}} \right| \leq 100(\log \log X)^{3/2}(\log \log \log X)^{-1} \leq \frac {\lceil e^2k\alpha^{-3/4}_i \rceil }{20} \leq \frac 12|{\mathcal P}'_1(\chi)|.
\end{split}
\end{align*}
  It follows that
\begin{align*}
\begin{split}
  \left| \sum_{2^{m+1}<p \leq X^{\alpha_1}}\frac{\chi (p)}{\sqrt{p}} \right| \geq  \left| {\mathcal P}'_1(\chi) \right| - \left| \sum_{p \leq 2^{m+1}}\frac{\chi (p)}{\sqrt{p}} \right| \geq \frac 12 \left| {\mathcal P}'_1(\chi) \right|.
\end{split}
\end{align*}

   We deduce from this and \eqref{EPprimebound} that when $|{\mathcal
  P}'_i(\chi)| \ge \lceil e^2k\alpha^{-3/4}_i \rceil/10$,
\begin{align*}
\begin{split}
E_{e^2k\alpha^{-3/4}_1}((k-1){\mathcal P}'_{1}(\chi)) &\le  \left( \frac{24  |\sum_{2^{m+1}<p \leq X^{\alpha_1}}\frac{\chi (p)}{\sqrt{p}}|}{\lceil e^2k\alpha^{-3/4}_1 \rceil}\right)^{\lceil e^2k\alpha^{-3/4}_1 \rceil} .
\end{split}
\end{align*}

  We conclude from the above and \eqref{EPrimboundPsmall1} that
\begin{align*}
\Big|E_{e^2k\alpha^{-3/4}_1} & \left( (k-1){\mathcal P}'_{1}(\chi) \right)\Big |^2 \\
& \ll e^{(1-k)2^{m/2+1}}  \left| E_{e^2k\alpha^{-3/4}_1}\Big((k-1)\sum_{2^{m+1}<p \leq X^{\alpha_1}}\frac{\chi (p)}{\sqrt{p}}\Big ) \right|^2  +\left|\left( \frac{24  |\sum_{2^{m+1}<p \leq X^{\alpha_1}}\frac{\chi (p)}{\sqrt{p}}|}{\lceil e^2k\alpha^{-3/4}_1 \rceil}\right)^{\lceil e^2k\alpha^{-3/4}_1 \rceil}\right|^2.
\end{align*}

   Substituting the above into \eqref{LMbound}, we see that
\begin{align}
\label{LM2}
\begin{split}
 \sum_{\chi \in \mathcal{S}(j)\bigcap \mathcal{P}(m)} & \left| L \left( \frac{1}{2} ,\chi \right) \right|^2 |\mathcal{M}(\chi, k-1)|^2 \\
\ll &  e^{k2^{m/2+4}}\exp \left(\frac {2k}{\alpha_j} \right)  \sum^{R}_{l=j+1}
\sum_{\substack{(q,3)=1 \\ X/2<q \leq X}} \ \sumstar_{\substack{\chi \shortmod{q} \\ \chi^3 = \chi_0}}
\Big (2^{m/10}|P_m(\chi)| \Big )^{2\lceil 2^{m/2}\rceil }  \Big | E_{e^2k\alpha^{-3/4}_1}(k{\mathcal M}'_{1,j}(\chi)) \Big |^2\\
& \hspace*{1cm} \times\Big (  e^{(1-k)2^{m/2+1}} \Big|E_{e^2k\alpha^{-3/4}_1}\Big((k-1)\sum_{2^{m+1}<p \leq X^{\alpha_1}}\frac{\chi (p)}{\sqrt{p}}\Big )\Big|^2+\Big|\Big( \frac{24  |\sum_{2^{m+1}<p \leq X^{\alpha_1}}\frac{\chi (p)}{\sqrt{p}}|}{\lceil e^2k\alpha^{-3/4}_1 \rceil}\Big)^{\lceil e^2k\alpha^{-3/4}_1 \rceil}\Big|^2 \Big ) \\
& \hspace*{1cm} \times \prod^j_{i=2}\Big | E_{e^2k\alpha^{-3/4}_i}(k{\mathcal M}_{i,j}(\chi)) \Big |^2 | E_{e^2k\alpha^{-3/4}_i}((k-1){\mathcal P}'_{i}(\chi)) \Big |^2 \Big ( \alpha^{3/4}_{j+1}{\mathcal
M}_{j+1, l}(\chi)\Big)^{2\lceil 1/(10\alpha_{j+1})\rceil } \\
& \hspace*{1cm} \times \prod^{\mathcal{J}}_{i=j+1}| E_{e^2k\alpha^{-3/4}_i}((k-1){\mathcal P}'_{i}(\chi)) \Big |^2  .
\end{split}
 \end{align}

  Now, proceeding as in the proofs of Propositions \ref{Prop4} and \ref{Prop6} and noting that, similar to the estimation given in \eqref{MQ}, the factor $\prod^{\mathcal{J}}_{i=j+2}| E_{e^2k\alpha^{-3/4}_i}((k-1){\mathcal P}'_{i}(\chi)) \Big |^2$ of the right hand side expression above gives rise to a contribution that is
\begin{align*}
\begin{split}
\ll & \prod_{X^{\alpha_{j+2}} \leq p \leq X^{\alpha_{\mathcal{J}}}}\left( 1+\frac {(k-1)^2}{p}+O \left( \frac 1{p^2} \right) \right) \ll \exp \Big ( \sum_{X^{\alpha_{j+2}} \leq p \leq X^{\alpha_{\mathcal{J}}}} \frac {(k-1)^2}{p} \Big ) \ll \exp \Big ( -(k-1)^2 \log \alpha_{j+2} \Big ),
\end{split}
 \end{align*}
   where the last estimation above follows from Lemma \ref{RS}.  We may choose $X$ large enough so that this is
\begin{align*}
\begin{split}
\ll & \exp \left(\frac {0.01k}{\alpha_j} \right).
\end{split}
 \end{align*}

  We further make use of the arguments in Section \ref{sect 3.4} to treat the rest of the terms of the right-hand side of \eqref{LM2} to deduce \eqref{sumovermj1}. This completes the proof of Theorem \ref{thmlowerbound1}.
  
\section{Proof of Theorem~\ref{coro:nonvanish}}
  
Theorem~\ref{coro:nonvanish} is obtained by a variant of the proof of Theorem \ref{thmlowerbound1} given in Section \ref{Sec: Pf of Thmlowerbounds}. In fact, note that the bounds in \eqref{LMM} and \eqref{LM} hold for $k=0$.  Thus setting $k=0$ and noting that $\mathcal{M}(\overline{\chi}, 0)=1$, in \eqref{LMM} and \eqref{LM} readily yeild 
\begin{equation} \label{posprop}
 \sumstar_{\chi, q}L \left( \frac{1}{2},\chi \right)  \mathcal{M}(\chi, -1)\Phi\leg{q}{X}  \gg X \quad \mbox{and} \quad
\sumstar_{\chi, q} \left| L \left( \frac{1}{2},\chi \right) \right|^2 |\mathcal{M}(\chi, -1)|^2 \Phi\leg{q}{X} \ll  X.
\end{equation}
  
We remark here that the expression $\mathcal{M}(\chi, -1)$ can now be regarded as a mollifier, similar to those constructed in \cite{LR21} and \cite{DFL21}.  Now we use $\mathcal{N}$ to denote the number of primitive Dirichlet cubic or quartic characters $\chi$ whose conductors does not exceed $X$ and $L(1/2,\chi)\neq 0$.  Using the Cauchy-Schwartz inequality together with the bounds in \eqref{posprop}, we get
\[ X^2 \ll \left( \sumstar_{\chi, q}L \left( \frac{1}{2},\chi \right)  \mathcal{M}(\chi, -1)\Phi\leg{q}{X} \right)^2 \leq \mathcal{N} \sumstar_{\chi, q} \left| L \left( \frac{1}{2},\chi \right) \right|^2 |\mathcal{M}(\chi, -1)|^2 \Phi\leg{q}{X} \ll \mathcal{N} X. \]
Theorem~\ref{coro:nonvanish} follows from the above computation.

\vspace*{.5cm}

\noindent{\bf Acknowledgments.}  P. G. is supported in part by NSFC grant 11871082 and L. Z. by the FRG grant PS43707 and the Faculty Silverstar Award PS65447 at the University of New South Wales (UNSW).  Moreover, the authors would like to thank the anonymous referee for his/her meticulous inspection of the paper and many helpful suggestions, especially on the approach that leads to a positive portion nonvanishing result given in Theorem~\ref{coro:nonvanish}.

\bibliography{biblio}
\bibliographystyle{amsxport}

\vspace*{.5cm}

\noindent\begin{tabular}{p{8cm}p{8cm}}
School of Mathematical Sciences & School of Mathematics and Statistics \\
Beihang University & University of New South Wales \\
Beijing 100191 China & Sydney NSW 2052 Australia \\
Email: {\tt penggao@buaa.edu.cn} & Email: {\tt l.zhao@unsw.edu.au} \\
\end{tabular}

\end{document}